\newcommand{\dif}{\text{\rm d}} % Nota de Amparo: \d en el manuscrito original sustituido por \dif por estar ya definido en la plantilla de Springer
\newcommand{\Emp}{\mathbb{E}}
\newcommand{\E}{\text{\rm E}}
\newcommand{\Prob}{\text{\rm P}} % Nota de Amparo: \P en el manuscrito original sustituido por \Prob por estar ya definido en la plantilla de Springer
\newcommand{\B}{\mathbb{B}}
\newcommand{\sgn}{\mbox{\rm sgn}}
\newcommand{\R}{\ensuremath{\mathbb{R}}}
\newcommand{\cd}{\ensuremath{\longrightarrow_{\text{\rm d}}}}
\newcommand{\cp}{\ensuremath{ \longrightarrow_{\text{P}}}}
\newcommand{\aequiv}{\ensuremath{ \overset{L^1}{\sim}}}
\newcommand{\cw}{\ensuremath{\overset{L^1}{\longrightarrow}_{\text{\rm{w}}}}}
\theoremstyle{definition}
\theoremstyle{theorem}
\newtheorem{theorem}{Theorem}
\newtheorem{corollary}{Corollary}
\newtheorem{lemma}{Lemma}
\newenvironment{keywords}
    {\vspace*{3mm}
    {\noindent{}\textit{Keywords\/:}}
        \nopagebreak\small}
        {}
\newenvironment{MSC}
    {\vspace*{3mm}
    {\noindent{}\textit{MSC\/:}}
        \nopagebreak\small}
        {}
\title{New distance measures for classifying X-ray astronomy data into stellar
classes\footnote{Research by A.B. and J.C. was supported by the Spanish MEyC grants MTM2013-44045-P and MTM2016-78751-P.
K.G. acknowledges the support from the Chandra ACIS Team contract SV4-74018 (G. Garmire \& L. Townsley, PIs), issued by the Chandra X-ray Center, which is operated by the Smithsonian Astrophysical Observatory on behalf of NASA under contract NAS8-03060.}}
\author{
Amparo Ba\'{\i}llo$^a$, Javier C\'{a}rcamo$^a$ and Konstantin Getman$^b$ \\
{\normalsize $^a$ Departamento de Matem\'{a}ticas, Universidad Aut\'{o}noma de Madrid,} \\ {\normalsize 28049 Madrid (Spain)} \\
{\normalsize $^b$ Department of Astronomy and Astrophysics, Pennsylvania State University,} \\ {\normalsize University Park PA 16802-6305 (U.S.A.)}
}
\date{}
\begin{document}

\maketitle

\begin{abstract}
% Insert your abstract here. Include keywords, PACS and mathematical subject classification numbers as needed.
The classification of the X-ray sources into classes (such as extragalactic sources, background stars, \ldots) is an essential task in astronomy. Typically, one of the classes corresponds to extragalactic radiation, whose photon emission behaviour is well characterized by a homogeneous Poisson process. We propose to use normalized versions of the Wasserstein and Zolotarev distances to quantify the deviation of the distribution of photon interarrival times from the exponential class. Our main motivation is the analysis of a massive dataset from X-ray astronomy obtained by the Chandra Orion Ultradeep Project (COUP). This project yielded
a large catalog of 1616 X-ray cosmic sources in the Orion Nebula region, with their series of photon arrival times and associated energies.
We consider the plug-in estimators of these metrics, determine their asymptotic distributions, and illustrate their finite-sample performance with a Monte Carlo study.
We estimate these metrics for each COUP source from three different classes. We conclude that our proposal provides a striking amount of information on the nature of the photon emitting sources. Further, these variables have the ability to identify X-ray sources wrongly catalogued before. As an appealing conclusion, we show that some sources, previously classified as extragalactic emissions, have a much higher probability of being young stars in Orion Nebula.
\keywords{Classification \and X-ray astronomy \and Wasserstein distance \and Zolotarev metric \and Photon interarrival time \and Exponential distribution}
\end{abstract}

\begin{keywords}
Classification; X-ray astronomy; Wasserstein distance; Zolotarev metric; photon interarrival time; exponential distribution.
\end{keywords}

%-----------------
\begin{MSC}
Primary 60K35; secondary 62G20, 62N05.
\end{MSC}

% SECTION 1: INTRODUCTION AND MOTIVATION

\section{Introduction and motivation} \label{Section_Introduction}

An important initial step in the analysis of stellar populations is the classification of samples into different classes of sources (see \cite{Broos_et_al_11}). The definition of the classes (foreground stars, background stars, different types of pre-main-sequence stars, etc.) depends on the research project, but it is always of interest to identify extragalactic sources (see \cite{Broos_et_al_11}; \cite{Feigelson_etal05}). Frequently, the allocation has a degree of uncertainty, to the extent that some of the astronomical sources might remain unclassified (see \cite{Getman_etal05b}) or even wrongly catalogued.

X-ray astronomy deals with the detection and observation of astrophysical objects by means of the  properties of their X-ray emissions.
There are many astronomical sources of X-rays, such as galaxy clusters, black holes or different types of stars.
In X-ray astronomy, classification of the data (that is, the X-ray sources) is accomplished using all the information provided by source features such as its location and X-ray and infrared properties (see \cite{Broos_et_al_11}).
As X-radiation is blocked by the atmosphere of Earth, cosmic X-ray emissions can only be detected by space telescopes.

This article is motivated by a real dataset obtained as a result of Chandra Orion Ultradeep Project (COUP). It was fulfilled with one of the ``Great Observatories'' of NASA, the Chandra X-ray space telescope. Chandra was designed to observe X-ray emissions from high-energy regions of the space such as supernovas, black holes or star clusters as the Orion Nebula.

In this work, we focus on a massive collection of X-ray astronomical sources derived from a 2003 exposure of Chandra to the Orion Nebula region (\cite{Getman_etal05a}).
%The relative proximity of Orion Nebula makes it an ideal test lab to understand the process and history of the formation of stars as well as to determine how many stars of different masses are forming. For astronomers, it is important to know how many low mass objects are in Orion Nebula to restrict the current theories on the formation of stars.
For each of the sources captured by Chandra, the photon arrival times and associated energies were collected during a nearly continuous observation period of almost 10 days. The majority of these X-ray sources have been classified into one of three groups: lightly-obscured and heavily-obscured low-mass young stars; and extragalactic sources. The X-ray classification of young stellar objects in star forming regions is in general a complicated task, where numerous source properties are used as features.

A very informative fact employed in this stellar classification is that, on the one hand, extragalactic radiation usually has a constant photon emission rate. This can be illustrated via the light curve of the astronomical source, a graph depicting its brightness (measured, e.g., by the photon count rate) over the course of the observation period. The light curves of COUP sources 111 (Figure~\ref{Figure.LightCurves} (a)) and 1304 (Figure~\ref{Figure.LightCurves} (b)), classified as extragalactic in \cite{Getman_etal05b} are examples of a constant photon arrival rate. In this case, the point process constituted by the photon arrival times is well-modeled by a homogeneous Poisson process. Thus, photon interarrival times from an extragalactic source should be close to an exponential distribution. On the other hand, young stars usually exhibit high-amplitude rapid variability (\cite{Wolk_et_al05}) and their photon arrivals are generally affected by flares so the corresponding interarrival distribution might deviate from the exponential one. As an example, Figure~\ref{Figure.LightCurves} (c) displays the light curve of COUP source 89, corresponding to a young star, where we can see a large flare at about 80 hours from the start of the observation period. In the astrophysical literature, there are different proposals to quantify photon emission variability in stellar X-ray sources (see \cite{Wolk_et_al05}).%In particular, as noted by \cite{Broos_et_al_13}, ``X-ray variability is a powerful classification input''

\begin{figure}
\begin{center}
\includegraphics[width=\textwidth]{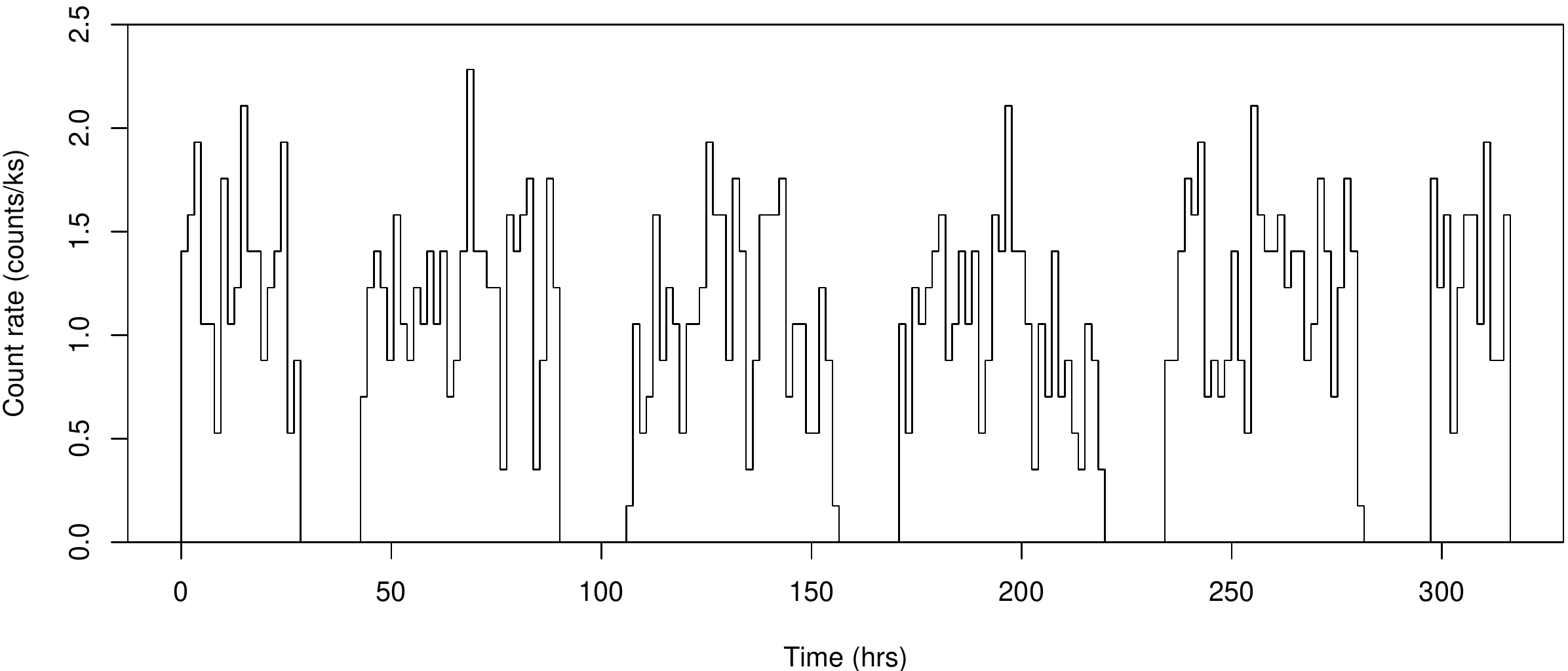} \\ (a) \\ [7 mm]
\includegraphics[width=\textwidth]{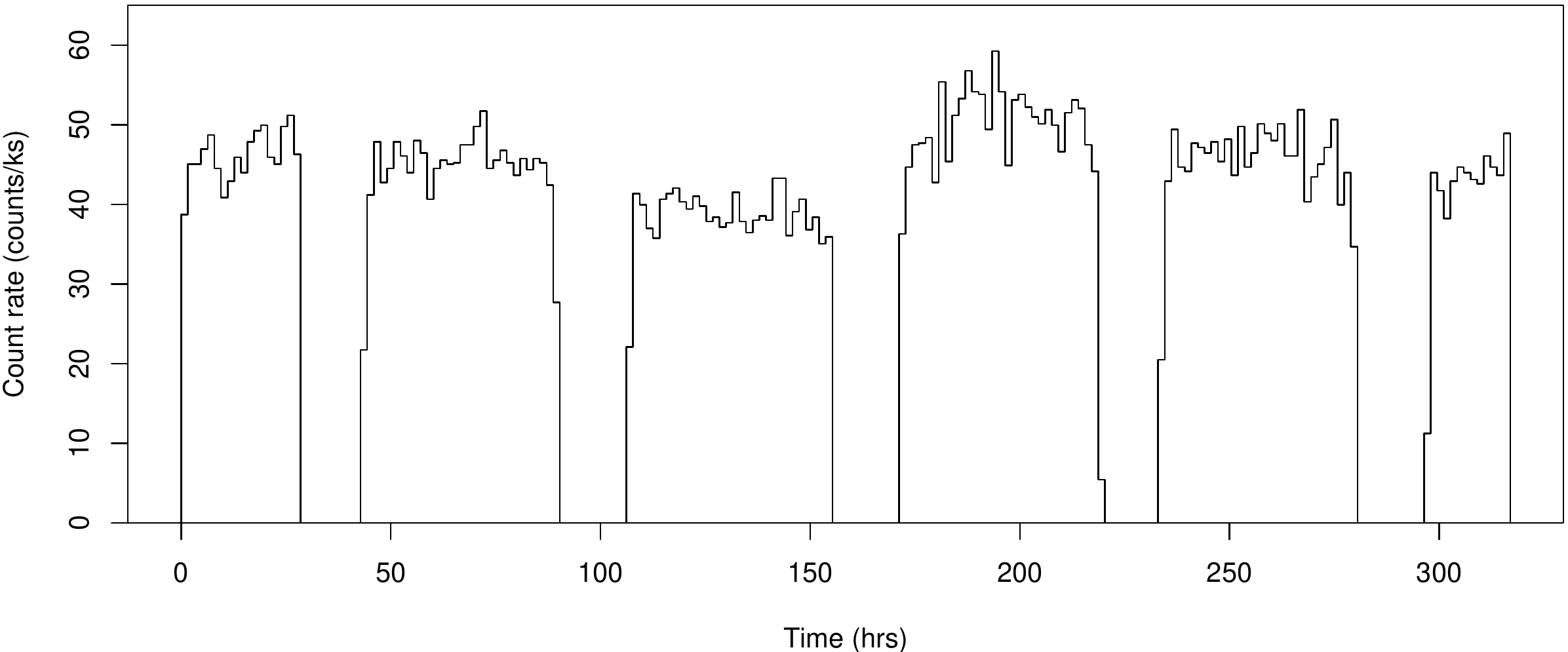} \\ (b) \\ [7 mm]
\includegraphics[width=\textwidth]{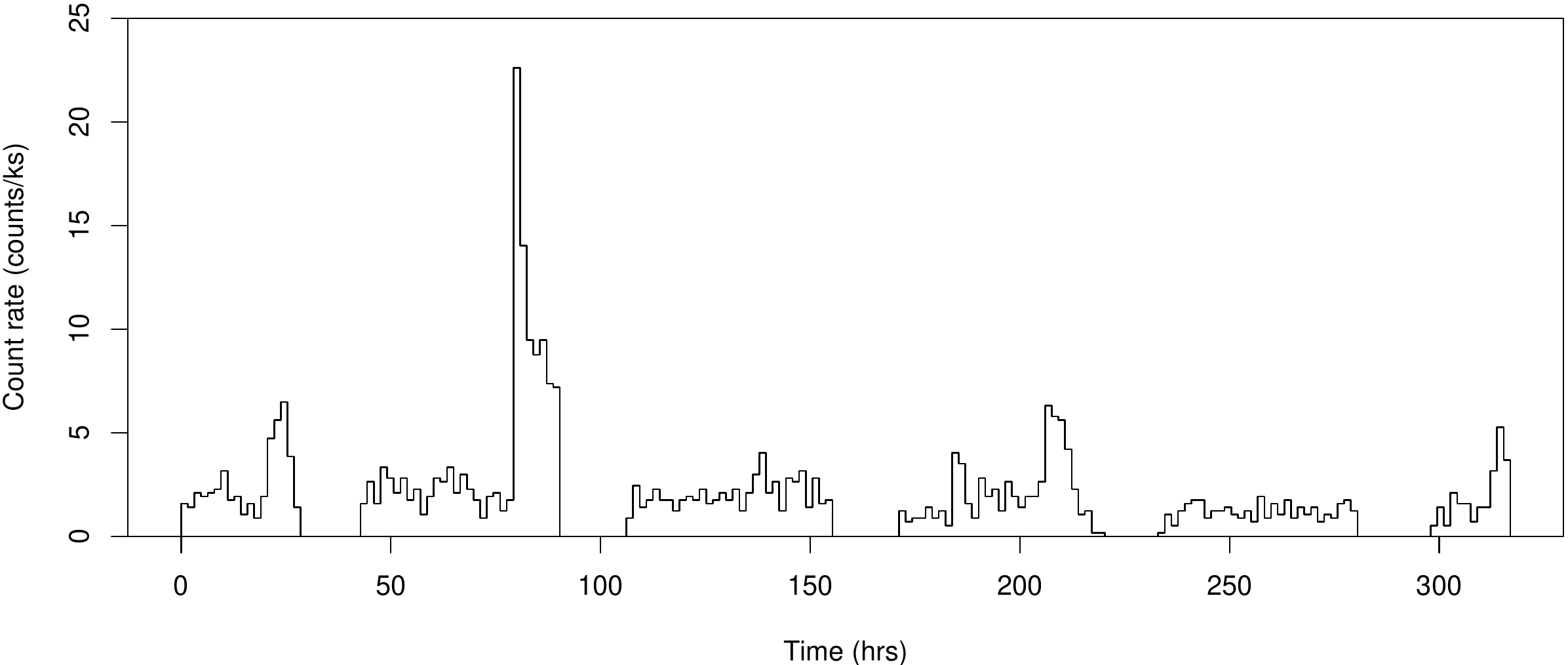} \\ (c)
\end{center}
\caption{Light curves for COUP sources (a) 111 (extragalactic radiation); (b) 1304 (extragalactic radiation); (c) 89 (lightly obscured PMS star).}
\label{Figure.LightCurves}
\end{figure}

Here, we propose a new statistical methodology to quantify the deviation of a random variable (namely, the photon interarrival time or PIT) from the exponential class.  The final aim is to generate a new input variable for the discriminant procedure distinguishing among the source classes, and particulary extragalactic ones (see the data analysis pipeline in Figure \ref{Figure.Pipeline}). We consider that a large estimated distance of the PIT to the exponential class is an evidence that the corresponding source is not extragalactic. Specifically, we use a normalized version of the so-called Wasserstein and Zolotarev $\zeta_2$ metrics, between the photon interarrival times of each X-ray source and the exponential distribution. As mentioned in \cite[Section 15]{Rachev}, the Zolotarev $\zeta_2$-metric is appropriate when dealing with exponential variables. Further, \cite{Rachev-2011} argue that Wasserstein and Zolotarev distances are more sensitive to extreme values  than other probability metrics such as the usual Kolmogorov distance. In general, it is often desirable to take into account extreme events to compare distributions. This is specially relevant with data coming from astrophysical studies (see \cite{FeigelsonBabu12}). We demonstrate that these distances can be used as informative variables that detect groups or similitudes among the X-ray sources and help identify possible outliers within a group. In this work the term ``outlier" refers to a source whose distance to the exponential distribution is substantially different from the distances of other members of the same class. %We will use this terminology in the following.
In fact, in the final analysis of the COUP data, we show that some of the outlying COUP sources, initially classified as extragalactic radiation, could actually be young stars in the Orion Nebula region.

\begin{figure}
\begin{center}
\includegraphics[width=0.8\textwidth]{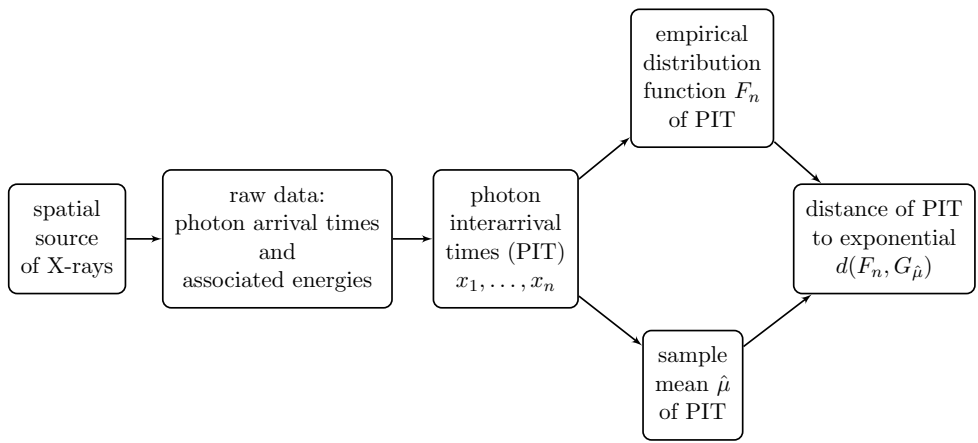}
\end{center}
\caption{Data analysis pipeline for each single cosmic X-ray source (input): the raw data is the series of photon arrival times and their corresponding energies. PIT are computed as differences between consecutive arrival times. The sample mean and the empirical distribution function of PIT produce the output (the empirical distance of the PIT distribution to the exponential one).}
\label{Figure.Pipeline}
\end{figure}

The ideas in this paper are also potentially useful in other biological or physical problems in which deviations from the exponential model need to be detected and quantified. Alternatively, the proposed methodology allows assessing whether a homogeneous Poisson process achieves a good approximation of an observed phenomenon.

This paper is structured as follows. In the next section, we describe in detail the COUP dataset. In Section \ref{Section Zolotarev}, we introduce the Wasserstein and Zolotarev distances and normalized versions of them. We also consider the plug-in estimators of these metrics to be used in practice and determine their asymptotic distributions. In Section \ref{Section.Simulations}, we carry out a simulation study to assess the practical performance of our proposal with finite samples. The COUP dataset is analyzed in depth in Section \ref{Section COUPAnalysis}. Finally, the main conclusions are collected in Section \ref{SectionConclusions}. In the Appendix, we include the proofs of the results stated in Section \ref{Section Zolotarev} and other technical details.

% SECTION 2: CHANDRA ORION ULTRADEEP PROJECT DATASET

\section{Chandra Orion Ultradeep Project Dataset} \label{Section_COUP_Dataset}

Among other things, the COUP analyzes X-flaring in pre-main-sequence (PMS) stars, members of the Orion Nebula region that is composed of the rich revealed Orion Nebula Cluster (ONC) and the filamentary molecular cloud called Orion Molecular Cloud 1 (OMC-1). %With respect to the observer, the ONC cluster lies right in front of the OMC-1.
A PMS star is a premature star that has acquired all of its mass from its natal envelope of interstellar dust and gas and  contracts until it starts hydrogen burning (see, e.g., \cite{Schulz12}). These young stars have intense magnetic fields, detected through their X-ray emissions, where plasma, confined in magnetic loops, is heated to X-ray emitting temperatures.
Detection of these high-energy emissions is only possible by space observatories, such as the Chandra X-ray Observatory (see \url{http://chandra.si.edu/}).
%It is convenient to analyze the magnetic activity of young stars in large samples of these, due to the large variability of their X-ray emission levels.
The nearest rich and concentrated collection of PMS stars is in the ONC/OMC-1 star forming region.

In January 2003, the Chandra X-ray Observatory focused its Advanced CCD Imaging Spectrometer on the ONC for a period of 13.2 days obtaining the deepest X-ray observation ever taken of any star cluster (see \cite{Getman_etal05a}). This observation period was only interrupted by the five passages of the Chandra spacecraft through the Van Allen radiation belts. Graphically, this is reflected in the five gaps appearing in the light curves of Figure~\ref{Figure.LightCurves}. The results were an almost continuous observation of the photon arrival times and associated energies for 1616 X-ray sources. COUP sources were compared with source positions from previously existing catalogs, with the aim of physically associating the COUP sources with already identified stars (whenever this was possible). The majority of these COUP sources has been classified into one of three groups (see \cite{Feigelson_etal05}):
\begin{list}{--}{\leftmargin=0.5cm \itemindent=0em}
\item \textsl{Lightly-obscured} PMS sources: This class is constituted by 835 cool low-mass PMS stars that are likely located in the ONC cluster. The term ``lightly obscured'' means that the star X-ray emission is less absorbed by the material in the interstellar medium.
\item \textsl{Heavily-obscured} PMS sources: This class corresponds to 559 low-mass PMS stellar objects that are likely still embedded in the nascent OMC-1 cloud.
\item \textsl{Nonmembers}: This group contains over 200 probable nonmembers of the Orion Nebula star forming region. A large part are likely extragalactic sources and a few are foreground stars or very faint sources without counterparts. For our analysis, in this group we only consider the extragalactic X-ray emissions.
\end{list}

The original classification of these 1594 COUP sources is detailed in \cite{Getman_etal05b}, who provide an estimated probability of membership to the Orion cloud for each source. Thus, this classification has a degree of uncertainty.
The aim of this work is to introduce new distance measures that contain relevant information for classifying X-ray astronomy data into stellar classes. Specifically, we compute some probability metrics of the PIT to the exponential class, as they exhibit different distributions depending on the nature of the photon emission sources. These distances can be incorporated in any classification rule to reduce the  classification error.

% SECTION 3: QUANTIFYING THE DISCREPANCY FROM EXTRAGALACTIC RADIATION

\section{Quantifying the discrepancy from extragalactic radiation}\label{Section Zolotarev}

As mentioned in the introduction, PIT resulting from extragalactic radiation are usually well-modeled by exponential distributions while, for the PMS stars classes, PIT distributions might deviate from the exponential one.
Our primary objective is to compare different sources by computing some normalized probability metrics between their corresponding PIT and the nearest exponential variable, and to highlight the classification power of these features. This idea has been tackled before: for instance, in X-ray astronomy the usual Kolmogorov distance to the exponential class is often used in data classification. However, depending on the problem at hand, other distances could be more appropriate, e.g., to take into account extreme values or to highlight a specific part of the distribution. Here, we analyze in depth the performance of the Wasserstein and Zolotarev distances, more sensitive to the behaviour in the tail of the distribution than others such as the usual Kolmogorov and Cram\'{e}r-von Mises metrics. In particular, a byproduct of the results in this section is the possibility of testing for exponentiality (see the end of Subsection \ref{Section Zolotarev.Subsection Asymptotics}), for which there are numerous proposals in the literature (see the reviews by \cite{Ascher} and \cite{Henze-Meintanis}).

% Subsection 3.1: The choice of the metrics

\subsection{The choice of the metrics} \label{Subsection_MetricsChoice}

A central question in the problem under consideration is the choice of a suitable metric to measure how far the probability distribution of the positive random variable $X$ of interest (the PIT),  with expectation $\mu>0$, is from the exponential variable $Y_\mu$ with the same mean.
In this work, we focus on the family of {integral probability metrics}
\begin{equation}\label{Zolotarev-metrics}
d_r(X,Y_\mu)=\sup \left\{|\E f(X)-\E f(Y_\mu)| :  f\in\mathcal{F}_r \right\}, \quad r\in \mathbb{N},
\end{equation}
where $\mathcal{F}_r$ is the class of real-valued functions $f$ on $\R$ having $r$-th derivative $f^{(r)}$ a.e. and such that $|f^{(r)}|\le 1$ a.e. Observe that $d_r(X,Y_\mu)$ is the maximum error in the expected value within the class of smooth functions $\mathcal{F}_r$ due to the approximation of $X$ by $Y_\mu$. For notational convenience, if $F$ and $G_\mu$ are the distribution functions of $X$ and $Y_\mu$, respectively, we indistinctly use $d_r(F,G_\mu)$ or $d_r(X,Y_\mu)$. Note also that $G_\mu(x)=1-\exp(-x/\mu)$, for $x\ge 0$.

The case $r=1$ in (\ref{Zolotarev-metrics}) has special relevance. By the Kantorovich--Rubinstein theorem, we see that $d_1\equiv \omega$ is the famous $L^1$-\emph{Wasserstein distance}.
For $r\ge 2$, $d_r\equiv\zeta_r$ is the \textit{Zolotarev metric} of order $r$. For a general reference on these distances we refer to \cite{Rachev}.

It can be proved that, when $d_r(X,Y_\mu)<\infty$, the moments of $X$ and $Y_\mu$ coincide up to order $r-1$. As a consequence, in  general, when $r\ge 3$ and $X$ is \textit{not} exponential, we have that $d_r(X,Y_\mu)=\infty$. This follows from the fact that, for many distributions, equalities $\E X=\mu$ and $\E X^2=2\,\mu^2$ are too restrictive and they actually imply that $X$ is exponential. For instance, this happens for the variables with the HNBUE or HNWUE property, two large families of random variables that include all the usual ageing and anti-ageing classes of distributions as it follows from results on stochastic equality under convex domination (see, e.g., \cite[Theorem 3.A.42, p. 133]{Shaked}). Therefore, only the cases $r=1$ and $r=2$ make sense for the discussed problem, that is, the Wasserstein distance) $d_1\equiv\omega$ and the Zolotarev metric $d_2\equiv\zeta_2$. These two metrics have easier-to-handle dual integral representations (see \cite{Rachev-2011}), given by
\begin{align}\label{Wasserstein representation}
\omega(F,G_\mu)&=\int_0^\infty |F(t)-G_\mu(t)|\,\dif t,\\[2 mm]
\zeta_2(F,G_\mu) &= \int_0^\infty \left| \int_t^\infty   (F(x)-G_\mu(x)) \, \dif x        \right|\, \dif t.\label{Zolotarev distance}
\end{align}

The metrics $\omega$ and $\zeta_2$ have practical advantages for the problem at hand. First, as argued in \cite[p. 15]{Rachev-2011}, they are more sensitive to the differences in the probabilities corresponding to extreme values than other common probability metrics such as the \textit{Kolmogorov distance}, $\kappa(F,G)=\sup_{x} |F(x)-G(x)|$.
Since the difference $|F(x)-G(x)|$ converges to zero as $x$ tends to $+\infty$ or $-\infty$, the contribution of the terms corresponding to extreme events is usually small. As a consequence, the differences in the tail behavior of $X$ and $Y$ will only be reflected in $\kappa(F,G)$ to a relatively small extent. However, representations (\ref{Wasserstein representation}) and (\ref{Zolotarev distance}) show that extreme values have more weight in $\omega$ and $\zeta_2$, as integrals of tail probabilities appear in these distances.
Additionally, Zolotarev $\zeta_2$-metric is considered as the ``natural metric" when dealing with the exponential class (see \cite[p. 340]{Rachev}).

It becomes soon apparent that these two metrics have the important drawback of not being scale-independent, an essential requirement to compare X-ray sources with very different photon emission rates (the average of photon emissions in each time interval). Even for sources of the same nature, the distances $\omega$ and $\zeta_2$ to the exponential distribution could be extremely different. This is clearly reflected in Figure~\ref{Figure.LightCurves}, where COUP 111 and 1304, both classified as extragalactic, would not be comparable without a suitable normalization.

To overcome this problem, next we introduce dimensionless versions of the $\omega$ and $\zeta_2$ distances.
Let us observe that the Wasserstein $\omega$ and Zolotarev $\zeta_2$ metrics are homogeneous of order 1 and 2, respectively. We define the \textit{normalized Wasserstein} and \textit{normalized Zolotarev} metrics as
$$
\bar\omega (X,Y_\mu)=\omega(X/\mu,Y_\mu/\mu) =\frac{1}{\mu}\omega(X, Y_\mu)
$$
and
$$
\bar\zeta_2 (X,Y_\mu)=\zeta_2(X/\mu,Y_\mu/\mu) =\frac{1}{\mu^2}\zeta_2(X, Y_\mu).
$$
These are the two probability distances that should be used in practice (instead of their unnormalized versions), as they are homogeneous of degree 0 and dimensionless.

%From now on we denote by $Y_\nu$ the random variable with exponential distribution with mean $\nu>0$ and by $G_\nu(x)=1-\exp(-x/\nu)$, $x\ge 0$, its (cumulative) distribution function.

% Subsection 3.2: Estimation and large sample behaviour

\subsection{Estimation and large sample behaviour} \label{Section Zolotarev.Subsection Asymptotics}

In practice, the distribution of the random variable $X$ is usually unknown. Therefore, the distances $d(X,Y_\mu)$ (for $d=\omega, \zeta_2, \bar\omega$ and $\bar\zeta_2$) have to be estimated using a random sample $X_1,\dots,X_n$ from $X$. We propose to use the plug-in estimators obtained by replacing the true distribution $F$ of $X$ by the empirical distribution $F_n$ of the sample $X_1,\dots,X_n$, that is,
$F_n(t) = n^{-1} \sum_{i=1}^n I_{\{X_i\le t\}}$, $n\in \mathbb{N}$, $t\ge 0$,
where $I_A$ stands for the indicator function of the set $A$. Thus, we estimate $d(F,G_\mu)$ by $d(F_n,G_{\hat\mu})$, where $\hat\mu=\frac{1}{n}\sum_{i=1}^n X_i$ is the sample mean (and the maximum likelihood estimator of the rate parameter $\mu$ of an exponential model).

Analyzing the asymptotic behavior of the empirical distances $d(F_n,G_{\hat\mu})$ is an important issue to understand its performance and accuracy in practice, for instance, to assess whether an exponential model provides a reasonably good approximation of $X$. Besides, the asymptotic probability distribution potentially allows performing inference on $d(X,Y_\mu)$. For the considered distances, we note that
\begin{equation*}
d(F_n,G_{\hat\mu})= d(F,G_\mu) + \frac{1}{\sqrt{n}}\delta_n(d,F),
\end{equation*}
where $\delta_n(d,F)$ is the standardized version of the estimated distances
\begin{equation}\label{normalized and estimated distance}
\delta_n(d,F):=\sqrt{n}\left(   d(F_n,G_{\hat\mu})-d(F,G_\mu)    \right),\quad  n\in \mathbb{N}.
\end{equation}
Here we find conditions (as sharp as possible) on the random variable $X$ so that $\delta_n(d,F)$
converges in distribution as $n\to\infty$, and determine its weak limit, $\delta_\infty(d,F)$. In this way, we obtain that
\begin{equation*}
d(F_n,G_{\hat\mu})= d(F,G_\mu) + O_\Prob(1/\sqrt{n}), \quad \text{as } n\to\infty.
\end{equation*}

Though the detailed proofs of the asymptotic distribution of $\delta_n(d,F)$ are collected in the Appendix, we describe here in broad strokes the main ideas behind them. First, we note that
\begin{equation}\label{delta as functional}
\delta_n(d,F)=\rho_n(\mathbb{X}_{d,n},g_d),
\end{equation}
where $\rho_n:L^1\times L^1\to\R$ is the functional defined by
\begin{equation}\label{rhon}
\rho_n(f,g):=\Vert f+\sqrt{n} g \Vert_1 -\sqrt{n} \Vert g\Vert_1,\quad\text{for }f,\, g\in L^1,
\end{equation}
$\mathbb{X}_{d,n}$ are the stochastic processes given (for $t\ge 0$) by
\begin{equation}\label{processes X(d)}
\begin{split}
\mathbb{X}_{\omega,n}(t) &:=\sqrt{n}\left[    (F_n(t)-G_{\hat\mu}(t)) -(F(t) -G_\mu(t))  \right],\\[3 mm]
\mathbb{X}_{\bar\omega,n}(t) &:=\sqrt{n} \left[   \frac{1}{\hat\mu} (F_n(t)-G_{\hat\mu}(t)) - \frac{1}{\mu}(F(t)-G_{\mu}(t))  \right],\\[3 mm]
\mathbb{X}_{\zeta_2,n}(t) &:=\sqrt{n}\left[  \int_t^\infty (F_n(x)-G_{\hat\mu}(x))\,\dif x-     \int_t^\infty (F(x)-G_{\mu}(x))\,\dif x     \right],\\[3 mm]
\mathbb{X}_{\bar{\zeta}_2,n}(t) &:=\sqrt{n}\left[ \frac{1}{\hat\mu^2} \int_t^\infty (F_n(x)-G_{\hat\mu}(x))\,\dif x-   \frac{1}{\mu^2}   \int_t^\infty (F(x)-G_{\mu}(x))\,\dif x     \right],
\end{split}
\end{equation}
and $g_d$ are the (deterministic) functions defined by
\begin{align}
g_\omega(t)&:= F(t)-G_{\mu}(t), &  g_{\bar\omega}(t)&:=\frac{1}{\mu}(F(t)-G_{\mu}(t)),\label{funtions omega}\\[3 mm]
g_{\zeta_2}(t)&:=\int_t^\infty (F(x)-G_{\mu}(x))\,\dif x, & g_{\bar\zeta_2}(t)&:=\frac{1}{\mu^2}\int_t^\infty (F(x)-G_{\mu}(x))\,\dif x.\label{funtions zeta}
\end{align}
From (\ref{delta as functional}), we see that establishing the (weak) convergence in $L^1$ of the processes $\mathbb{X}_{d,n}$ in (\ref{processes X(d)}), combined with the continuity of the linking functional in (\ref{rhon}), immediately translates into the convergence in distribution of $\delta_n(d,F)$.

Before stating the main results, we need to introduce some definitions and notation. In the sequel, $\B_F:=\B\circ F$ is the \emph{$F$-Brownian bridge}, where $\B$ is a standard Brownian bridge on $[0,1]$, that is, $\B$ is a centered Gaussian process with covariance function $\gamma(s,t)=\min(s,t)-st$ and continuous paths, with probability 1.

We consider the \emph{Lorentz spaces} of positive random variables defined by
$\mathcal{L}^{2,1} :=\left\{  X :  \Lambda_{2,1}(X)<\infty \right\}$ and $\mathcal{L}^{4,2} :=\left\{  X :  \Lambda_{4,2}(X)<\infty \right\}$, where
\begin{equation*}%\label{sub integrability 2}
\Lambda_{2,1}(X):=\int_0^\infty \sqrt{\Prob(X>t)}\,  \dif t     \quad       \text{and}      \quad       \Lambda_{4,2}(X):=\int_0^\infty t\sqrt{\Prob(X>t)}\,  \dif t
\end{equation*}
(see \cite[p. 279]{Ledoux-Talagrand}). Conditions $\Lambda_{2,1}(X)<\infty$ and $\Lambda_{4,2}(X)<\infty$ are slightly stronger than $\E X^2<\infty$ and $\E X^4<\infty$, respectively (see \cite{Grafakos}).
Finally, $\mathcal{L}^p:=\{X:\E X^p<\infty\}$ ($p>0$) is the usual space of (positive) random variables with finite $p$-th moment.

The following two theorems characterize the asymptotic behavior of $\mathbb{X}_{d,n}$ in $L^1$. The results are sharp in the sense that we obtain the exact integrability condition on $X$ so that the processes converge in distribution in $L^1$ as $n\to\infty$. The symbol ``$\cw$" stands for the weak convergence of a sequence of random processes in the space $L^1$  as $n\to\infty$ (see the Appendix for the precise definition).

%%%%%%% START OF THEOREM 1 %%%%%%%%%%%%%%
\begin{theorem}\label{Theorem SP Wasserstein}
Let $X$ be a positive random variable with expectation $\mu>0$. If $X \in\mathcal{L}^{4/3}$, the following assertions are equivalent:
\begin{enumerate}
\item[(a)] $X\in \mathcal{L}^{2,1}$.
\item[(b)] $\mathbb{X}_{\omega,n}\cw \mathbb{X}_{\omega,F}$, where $\mathbb{X}_{\omega,F}$ is a centered Gaussian process given by
\begin{equation}\label{XFW}
\mathbb{X}_{\omega,F}(t):=\B_F(t)- \frac{t}{\mu^2} e^{-t/\mu} \int_0^\infty \B_F(s)\,\dif s,\quad t\ge 0.
\end{equation}
\item[(c)] $\mathbb{X}_{\bar\omega,n}\cw \mathbb{X}_{\bar\omega,F}$, where $\mathbb{X}_{\bar\omega,F}$ is a centered Gaussian process given by
\begin{equation*}%\label{XFWB}
\mathbb{X}_{\bar\omega,F}(t):=\frac{1}{\mu} \left[\B_F(t)+ \left(  g_{\bar\omega}(t)-\frac{t}{\mu^2}e^{-t/\mu}      \right) \int_0^\infty \B_F(s)\,\dif s\right],\quad t\ge 0,
\end{equation*}
and the function $g_{\bar\omega}$ is defined in (\ref{funtions omega}).
\end{enumerate}
\end{theorem}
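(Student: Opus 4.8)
The plan is to reduce the estimated process $\mathbb{X}_{\omega,n}$ to the ordinary empirical process $\alpha_n(t):=\sqrt{n}(F_n(t)-F(t))$ and then to invoke the sharp $L^1$ central limit theorem for $\alpha_n$. Two facts will be used repeatedly. First, since $\mu=\int_0^\infty(1-F)$ and $\hat\mu=\int_0^\infty(1-F_n)$, one has the exact identity $\sqrt{n}(\hat\mu-\mu)=-\int_0^\infty\alpha_n(t)\,\dif t$, so the estimation of $\mu$ is itself a continuous linear functional of $\alpha_n$. Second, I will rely on the known characterization that $\alpha_n\cw\B_F$ in $L^1$ if and only if $\int_0^\infty\sqrt{F(t)(1-F(t))}\,\dif t<\infty$, which for a positive $X$ is equivalent to $X\in\mathcal{L}^{2,1}$ (the behaviour near the origin being harmless since $\int_0^1\sqrt{F}\le 1$).

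The core is a linearization of the plug-in in $\hat\mu$. Writing $G_m(t)=1-e^{-t/m}$ and Taylor-expanding $m\mapsto e^{-t/m}$ around $\mu$ to second order, together with the identity above, I obtain the decomposition
\[
\mathbb{X}_{\omega,n}(t)=\alpha_n(t)-\frac{t}{\mu^2}e^{-t/\mu}\int_0^\infty\alpha_n(s)\,\dif s+R_n(t)=\Psi(\alpha_n)(t)+R_n(t),
\]
where $\Psi(f)(t):=f(t)-\frac{t}{\mu^2}e^{-t/\mu}\int_0^\infty f(s)\,\dif s$ and $R_n$ is the quadratic remainder. Because $\int_0^\infty\frac{t}{\mu^2}e^{-t/\mu}\,\dif t=1$, the map $\Psi$ is a bounded linear operator on $L^1$, and a direct check gives $\Psi(\B_F)=\mathbb{X}_{\omega,F}$. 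The remainder satisfies $\|R_n\|_1\le C\sqrt{n}(\hat\mu-\mu)^2$; this is exactly where the hypothesis $X\in\mathcal{L}^{4/3}$ is used, since the Marcinkiewicz--Zygmund strong law with exponent $4/3$ yields $\hat\mu-\mu=o(n^{-1/4})$ a.s., whence $\|R_n\|_1\cp 0$.

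The sufficiency direction (a)$\Rightarrow$(b) is then immediate: $X\in\mathcal{L}^{2,1}$ gives $\alpha_n\cw\B_F$, the continuous-mapping theorem applied to $\Psi$ gives $\Psi(\alpha_n)\cw\Psi(\B_F)=\mathbb{X}_{\omega,F}$, and Slutsky combined with $\|R_n\|_1\cp 0$ yields $\mathbb{X}_{\omega,n}\cw\mathbb{X}_{\omega,F}$. The equivalence (b)$\Leftrightarrow$(c) is of the same (easy) nature: from the algebraic identity
\[
\mathbb{X}_{\bar\omega,n}(t)=\frac{1}{\hat\mu}\,\mathbb{X}_{\omega,n}(t)-\frac{\sqrt{n}(\hat\mu-\mu)}{\hat\mu\,\mu}\,(F(t)-G_\mu(t)),
\]
the consistency $\hat\mu\cp\mu$, the joint convergence of $(\mathbb{X}_{\omega,n},\sqrt{n}(\hat\mu-\mu))$ (the scalar being the continuous functional $-\int_0^\infty$ of $\alpha_n$), and the Slutsky and continuous-mapping theorems, each process can be recovered from the other; a short computation identifies the limit with the stated $\mathbb{X}_{\bar\omega,F}$.

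The hard part is the necessity (b)$\Rightarrow$(a), which I would prove by contraposition: if $X\notin\mathcal{L}^{2,1}$ then $\mathbb{X}_{\omega,n}$ is not tight in $L^1$. The obstacle is structural: $\Psi$ is only a bounded projection---its kernel is the span of the fixed profile $\phi(t)=\frac{t}{\mu^2}e^{-t/\mu}$, since $\int_0^\infty\phi=1$---so convergence of $\mathbb{X}_{\omega,n}=\Psi(\alpha_n)+R_n$ is a priori weaker than that of $\alpha_n$, the correction $-(\int_0^\infty\alpha_n)\,\phi$ removing precisely the one-dimensional direction along which $-\sqrt{n}(\hat\mu-\mu)$ may diverge when $\E X^2=\infty$. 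I would establish non-tightness through an $L^1$ tail estimate, showing that $\int_T^\infty|\mathbb{X}_{\omega,n}(t)|\,\dif t$ cannot be made uniformly small: since $\phi$ decays exponentially, the residual tail mass $\int_T^\infty|\alpha_n-c\,\phi|\,\dif t$ cannot be pushed below order $\int_T^\infty\sqrt{F(1-F)}\,\dif t$ by any single multiple $c$ of the smooth profile $\phi$, so divergence of $\int^\infty\sqrt{\Prob(X>t)}\,\dif t$ forces non-tightness. The genuinely delicate point---and the main obstacle of the whole theorem---is this lower bound in the heavy-tailed regime, where the mean-correction term lives at the same scale (of order $n^{1/\alpha-1/2}$ for a tail index $\alpha\in(4/3,2)$) as the divergent empirical tail, so one must exploit the mismatch between the rough increments of $\alpha_n$ and the fixed smooth shape $\phi$ to rule out cancellation. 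This is precisely the sharp necessity half of the empirical $L^1$-CLT, here carried through in the estimated-parameter setting.
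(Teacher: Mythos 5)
Your sufficiency argument (a)$\Rightarrow$(b) is essentially the paper's: the linearization $\mathbb{X}_{\omega,n}=\Psi(\Emp_n)+R_n$ with $\Vert R_n\Vert_1\lesssim\sqrt{n}(\hat\mu-\mu)^2\to 0$ a.s.\ by the Marcinkiewicz--Zygmund law under $X\in\mathcal{L}^{4/3}$ is exactly Lemma~\ref{lemma equivalence}(a) and Corollary~\ref{Corollary equivalence}(i), and the continuous-mapping step is identical. The genuine gap is the necessity direction (b)$\Rightarrow$(a). You reduce it to the claim that, when $\int^\infty\sqrt{\Prob(X>t)}\,\dif t=\infty$, no multiple $c\,\phi$ of the fixed profile $\phi(t)=te^{-t/\mu}/\mu^2$ can cancel the divergent tail mass of $\Emp_n$ in $L^1$ --- and then you leave that claim unproved. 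This lower bound is the entire content of the implication; saying one ``must exploit the mismatch between the rough increments of $\alpha_n$ and the fixed smooth shape $\phi$'' is a heuristic, not an argument, and it is exactly in the regime you flag (correction term at the same scale as the divergent empirical tail) that ruling out cancellation is nontrivial. The paper sidesteps this entirely with a different device: since $\Psi(\Emp_n)=n^{-1/2}\sum_{i=1}^n\mathbb{Y}_{\omega,i}$ is itself a normalized sum of i.i.d.\ $L^1$-valued elements, with $\mathbb{Y}_\omega(t)=\Prob(X>t)-I_{\{X>t\}}+(X-\mu)te^{-t/\mu}/\mu^2$, its weak convergence to a Gaussian limit means $\mathbb{Y}_\omega$ satisfies the CLT in $L^1$, and the Araujo--Gin\'e characterization of the CLT in $L^1$ forces $\int_0^\infty\sqrt{\E\,\mathbb{Y}_\omega(t)^2}\,\dif t<\infty$; Minkowski's inequality then peels off the term $(X-\mu)\phi(t)$ (which contributes only $\sigma\Vert\phi\Vert_1=\sigma<\infty$) and leaves $\int_0^\infty\sqrt{F(t)(1-F(t))}\,\dif t<\infty$, i.e.\ $X\in\mathcal{L}^{2,1}$. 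You would need either to adopt this route or to actually prove your tail lower bound; as written, the hardest half of the theorem is missing.

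A secondary issue: your (b)$\Leftrightarrow$(c) step invokes the joint convergence of $(\mathbb{X}_{\omega,n},\sqrt{n}(\hat\mu-\mu))$, but $\sqrt{n}(\hat\mu-\mu)=-\int_0^\infty\Emp_n$ converges only once $\Emp_n\cw\B_F$ is known, i.e.\ once (a) is already in hand; note also that $\int_0^\infty\mathbb{X}_{\omega,n}(t)\,\dif t=0$ identically, so the scalar cannot be read off from $\mathbb{X}_{\omega,n}$ itself. What your identity actually yields is (a)$\Rightarrow$(c), so the implication (c)$\Rightarrow$(a) needs its own necessity argument (the paper runs the same i.i.d.-sum argument on $\hat\mu\,\mathbb{X}_{\bar\omega,n}\cw\mu\,\mathbb{X}_{\bar\omega,F}$).
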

%%%%%%% END OF THEOREM 1 %%%%%%%%%%%%%%

%%%%%%% START OF THEOREM 2 %%%%%%%%%%%%%%
\begin{theorem}\label{Theorem SP Zolotarev}
Let $X$ be a positive random variable with expectation $\mu>0$. The following assertions are equivalent:
\begin{enumerate}
\item[(a)] $X\in \mathcal{L}^{4,2}$.
\item[(b)] $\mathbb{X}_{\zeta_2,n}\cw \mathbb{X}_{\zeta_2,F}$, where $\mathbb{X}_{\zeta_2,F}$ is a centered Gaussian process given by
\begin{equation*}%\label{XFZ}
\mathbb{X}_{\zeta_2,F}(t):=\int_t^\infty \B_F(s)\, \dif s- \left(1+\frac{t}{\mu} \right)e^{-t/\mu} \int_0^\infty \B_F(s)\,\dif s,\quad t\ge 0.
\end{equation*}
\item[(c)] $\mathbb{X}_{\bar\zeta_2,n}\cw \mathbb{X}_{\bar\zeta_2,F}$, where $\mathbb{X}_{\bar\zeta_2,F}$ is a centered Gaussian process given by
\begin{equation}\label{XFZB}
\mathbb{X}_{\bar\zeta_2,F}(t):=\frac{1}{\mu^2}\left[   \int_t^\infty \B_F(s)\, \dif s+ \left(2\,\mu\, g_{\bar\zeta_2}(t)- \left(1+\frac{t}{\mu}\right)e^{-t/\mu}\right) \int_0^\infty \B_F(s)\, \dif s\right],
\end{equation}
for $t\ge 0$, and the function $g_{\bar\zeta_2}$ is defined in (\ref{funtions zeta}).
\end{enumerate}
\end{theorem}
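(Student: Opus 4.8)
The plan is to represent $\mathbb{X}_{\zeta_2,n}$ as a normalized sum of i.i.d.\ centered $L^1$-valued random elements plus an asymptotically negligible remainder, and then to invoke the central limit theorem in the Banach space $L^1$, whose validity for i.i.d.\ summands is equivalent to the pregaussian property (recall that $L^1$ is of cotype $2$). Concretely, I would first split
$$\mathbb{X}_{\zeta_2,n}(t)=\sqrt{n}\int_t^\infty (F_n(x)-F(x))\,\dif x-\sqrt{n}\int_t^\infty (G_{\hat\mu}(x)-G_\mu(x))\,\dif x$$
and treat the two pieces separately. Using $\int_t^\infty(1-F_n(x))\,\dif x=n^{-1}\sum_{i=1}^n (X_i-t)^+$ and the analogous identity for $F$, the empirical part becomes $\sqrt{n}\int_t^\infty (F_n-F)\,\dif x=-n^{-1/2}\sum_{i=1}^n\big[(X_i-t)^+-\E(X-t)^+\big]$, an i.i.d.\ sum of centered elements of $L^1([0,\infty),\dif t)$. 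For the exponential part, $\int_t^\infty(G_{\hat\mu}-G_\mu)\,\dif x=\mu e^{-t/\mu}-\hat\mu e^{-t/\hat\mu}$, so a first-order expansion of $\mu\mapsto \mu e^{-t/\mu}$ (derivative $(1+t/\mu)e^{-t/\mu}$) gives $\sqrt{n}\int_t^\infty(G_{\hat\mu}-G_\mu)\,\dif x=-(1+t/\mu)e^{-t/\mu}\,\sqrt{n}(\hat\mu-\mu)+R_n(t)$.

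Since $\sqrt{n}(\hat\mu-\mu)=n^{-1/2}\sum_{i=1}^n(X_i-\mu)$, I would assemble
$$\mathbb{X}_{\zeta_2,n}=n^{-1/2}\sum_{i=1}^n\Psi_i+R_n,\qquad \Psi_i(t)=-\big[(X_i-t)^+-\E(X-t)^+\big]+(1+t/\mu)e^{-t/\mu}(X_i-\mu),$$
with the $\Psi_i$ i.i.d., centered and $L^1$-valued. A direct covariance computation then identifies the Gaussian limit with $\mathbb{X}_{\zeta_2,F}$, using that the integrated empirical process $\sqrt{n}\int_t^\infty (F_n-F)\,\dif x$ converges to $\int_t^\infty\B_F$ and that $\sqrt{n}(\hat\mu-\mu)=-\sqrt{n}\int_0^\infty(F_n-F)\,\dif x$ converges to $-\int_0^\infty\B_F$; substituting both limits reproduces exactly the stated form of $\mathbb{X}_{\zeta_2,F}$.

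The crux is the exact integrability threshold. By the characterization of pregaussian laws in $L^1$, the sum $n^{-1/2}\sum_i\Psi_i$ converges weakly in $L^1$ if and only if $\int_0^\infty(\E\Psi^2(t))^{1/2}\,\dif t<\infty$. Since the coefficient $(1+t/\mu)e^{-t/\mu}$ decays exponentially and $X\in\mathcal{L}^{4,2}$ already forces a finite fourth moment, the dominant contribution comes from $\sqrt{\Var((X-t)^+)}$; bounding $\Var((X-t)^+)\le\int_t^\infty(x-t)^2\,\dif F(x)$ and using a Hardy-type inequality together with a matching lower bound, I would show that $\int_0^\infty\sqrt{\Var((X-t)^+)}\,\dif t<\infty$ holds precisely when $\int_0^\infty t\sqrt{\Prob(X>t)}\,\dif t<\infty$, i.e.\ $X\in\mathcal{L}^{4,2}$. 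This single equivalence delivers both sufficiency (a)$\Rightarrow$(b) and necessity (b)$\Rightarrow$(a): for the latter, a weak $L^1$-limit of normalized sums is an $L^1$-valued Gaussian variable, hence pregaussian, so the integral must be finite. It then remains to check $\|R_n\|_1\cp 0$, which follows from uniform control of the second derivative of $\mu\mapsto \mu e^{-t/\mu}$ on a neighbourhood of $\mu$, the bound $\sqrt{n}(\hat\mu-\mu)=O_\Prob(1)$, and $\hat\mu\cp\mu$.

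Finally, for (b)$\Leftrightarrow$(c) I would write $\mathbb{X}_{\bar\zeta_2,n}(t)=\hat\mu^{-2}\mathbb{X}_{\zeta_2,n}(t)+\sqrt{n}(\hat\mu^{-2}-\mu^{-2})\int_t^\infty(F-G_\mu)\,\dif x$ and apply Slutsky's lemma in $L^1$ ($\hat\mu\cp\mu$) together with the delta method $\sqrt{n}(\hat\mu^{-2}-\mu^{-2})\to 2\mu^{-3}\int_0^\infty\B_F$; since $\int_t^\infty(F-G_\mu)\,\dif x=\mu^2 g_{\bar\zeta_2}(t)$, this yields exactly the extra term $2\mu\,g_{\bar\zeta_2}(t)$ in~(\ref{XFZB}), and the integrability requirement is unchanged because $g_{\bar\zeta_2}$ is bounded and integrable once $X\in\mathcal{L}^{4,2}$. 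I expect the main obstacle to be the sharpness of the threshold --- both the exact pregaussian characterization in $L^1$ and the two-sided estimate relating $\int_0^\infty\sqrt{\Var((X-t)^+)}\,\dif t$ to $\Lambda_{4,2}(X)$ --- whereas the linearization, the remainder bound and the transfer to the normalized process are comparatively routine.
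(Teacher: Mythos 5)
Your proof is correct, and its core coincides with the paper's: both linearize $\mu\mapsto\mu e^{-t/\mu}$ to replace $\sqrt{n}\int_t^\infty(G_{\hat\mu}-G_\mu)$ by $-(1+t/\mu)e^{-t/\mu}\sqrt{n}(\hat\mu-\mu)$ up to an $L^1$-negligible remainder (the paper's Lemma~\ref{lemma equivalence}(e)), both then write the process as a normalized sum of i.i.d.\ centered $L^1$-valued summands (your $\Psi_i$ is exactly the paper's $\mathbb{Z}_{\bar\zeta_2,i}$ up to sign and the factor $\mu^{-2}$), and both use the Araujo--Gin\'e characterization of the CLT in the cotype-2 space $L^1$ via finiteness of $\int_0^\infty(\E\Psi(t)^2)^{1/2}\,\dif t$, plus the inequality $t^2\Prob(X>2t)\le\E(X-t)_+^2$, to obtain the necessity direction. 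Where you genuinely diverge is in proving (a)$\Rightarrow$(b),(c): the paper does not verify the pregaussian integral directly, but instead shows $\mathbb{X}_{\zeta_2,n}\aequiv\phi_{\zeta_2}(\Emp_n)$ for a bounded linear operator $\phi_{\zeta_2}:W^1\to L^1$ and imports from Lemma~\ref{Lemma Convergence of E}(b) (taken from \cite{Baillo-Carcamo-Nieto}) that $\Emp_n$ converges weakly in the weighted space $W^1$ exactly when $X\in\mathcal{L}^{4,2}$, so the continuous mapping theorem finishes the job. Your route is more self-contained but places all the weight on the two-sided estimate $\int_0^\infty\sqrt{\Var((X-t)^+)}\,\dif t<\infty\Leftrightarrow\Lambda_{4,2}(X)<\infty$, which you rightly flag as the crux; it is true and provable by a dyadic decomposition (bound $\int_t^\infty(x-t)\Prob(X>x)\,\dif x$ for $t\in[2^k,2^{k+1})$ by $\sum_{j\ge k}c\,2^{2j}\Prob(X>2^j)$, use $\sqrt{a+b}\le\sqrt{a}+\sqrt{b}$, and sum), so nothing is missing in principle --- though this estimate is essentially the content of the $W^1$-lemma the paper cites, so you are re-proving it rather than bypassing it. Two bookkeeping points to make explicit: for the reverse implications you must first note that (b) or (c) forces $\int_t^\infty(F-G_\mu)\,\dif x\in L^1$, i.e.\ $\E X^2<\infty$, before the linearization and the bound $\sqrt{n}(\hat\mu-\mu)^2=o_\Prob(1)$ are available (this is why Theorem~\ref{Theorem SP Zolotarev}, unlike Theorem~\ref{Theorem SP Wasserstein}, carries no side condition); and the Slutsky step in (b)$\Leftrightarrow$(c) needs joint convergence of $(\mathbb{X}_{\zeta_2,n},\sqrt{n}(\hat\mu-\mu))$, which holds since both are functionals of the same empirical process.
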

%%%%%%% END OF THEOREM 2 %%%%%%%%%%%%%%

%%%%%%% START OF LEMMA 1 %%%%%%%%%%%%%%
%\begin{lemma}\label{lemma key}
%For $f$, $g\in L^1$, the functional $\rho_n$ defined in (\ref{rhon}) satisfies that
%\begin{equation}\label{rho}
%\lim_{n\to\infty} \rho_n(f,g)=\rho(f,g):=\int_{I_g} |f|+\int_{I_g^c}f\, \sgn(g),
%\end{equation}
%where $I_g:=\{t: g(t)=0\}$. Moreover, if $f_n\to f$ in $L^1$, then $\rho_n(f_n,g)\to\rho(f,g)$.
%\end{lemma}
%%%%%%% END OF LEMMA 1 %%%%%%%%%%%%%%

Using (\ref{delta as functional}) and Theorems~\ref{Theorem SP Wasserstein} and \ref{Theorem SP Zolotarev}, in the next theorem we derive the asymptotic distribution of $\delta_n(d,F)$. In the sequel ``$\cd$" stands for convergence in distribution as $n\to\infty$, $\sgn(\cdot)$ denotes the sign function and $A^c$ is the complement of the set $A$.

%%%%%%% START OF THEOREM 3 %%%%%%%%%%%%%%
\begin{theorem}\label{Theorem 3}
Let $X$ be  a positive random variable with expectation $\mu>0$. For $d=\omega$ or $d=\bar\omega$ (respectively, for $d=\zeta_2$ or $d=\bar\zeta_2$), let us assume that $X\in \mathcal{L}^{2,1}$ (respectively, $X\in \mathcal{L}^{4,2}$). Then, $\delta_n(d,F) \cd \delta_\infty(d,F)$, with
\begin{equation}\label{delta-infinity}
\delta_\infty(d,F):=\int_{I({g_d})} |\mathbb{X}_{d,F}(t)| \,\dif t +\int_{{I({g_d})}^c} \mathbb{X}_{d,F}(t) \, \sgn(g_d(t)) \,\dif t,
\end{equation}
where the processes $\mathbb{X}_{d,F}$ are defined in (\ref{XFW})-(\ref{XFZB}), the functions $g_d$ are given in (\ref{funtions omega})-(\ref{funtions zeta}), and $I(g_d):=\{t\ge 0 : g_d(t)=0\}$.
\end{theorem}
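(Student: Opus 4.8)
The plan is to build directly on the representation $\delta_n(d,F)=\rho_n(\mathbb{X}_{d,n},g_d)$ in (\ref{delta as functional}), combining the weak $L^1$-convergence $\mathbb{X}_{d,n}\cw \mathbb{X}_{d,F}$ supplied by \cref{Theorem SP Wasserstein,Theorem SP Zolotarev} with an analysis of the asymptotic behaviour of the linking functionals $\rho_n$ from (\ref{rhon}). The natural candidate for the limiting functional is
\[
\rho_\infty(f,g):=\int_{I(g)} |f(t)|\,\dif t + \int_{I(g)^c} f(t)\,\sgn(g(t))\,\dif t,\qquad I(g):=\{t\ge 0: g(t)=0\},
\]
so that $\delta_\infty(d,F)=\rho_\infty(\mathbb{X}_{d,F},g_d)$ and the assertion becomes a continuous-mapping statement for a sequence of functionals that themselves vary with $n$. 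Since $|\rho_\infty(f,g)|\le\Vert f\Vert_1$, the target functional is well defined and bounded on $L^1$.

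First I would establish the key deterministic lemma: if $f_n\to f$ in $L^1$, then $\rho_n(f_n,g)\to\rho_\infty(f,g)$ for every fixed $g\in L^1$. The starting point is the pointwise behaviour of the integrand $\phi_n(t):=|f_n(t)+\sqrt{n}\,g(t)|-\sqrt{n}\,|g(t)|$: at each $t$ with $f_n(t)\to f(t)$ one has $\phi_n(t)\to |f(t)|$ when $g(t)=0$, and $\phi_n(t)\to f(t)\,\sgn(g(t))$ when $g(t)\ne 0$, because in the latter case $\sqrt{n}\,g(t)$ eventually fixes the sign of $f_n(t)+\sqrt{n}\,g(t)$. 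The crucial uniform control is the reverse-triangle bound $|\phi_n(t)|\le |f_n(t)|$, which neutralises the diverging factor $\sqrt{n}$. Passing to a subsequence along which $f_n\to f$ almost everywhere and invoking a generalized dominated convergence argument (Pratt's lemma, with variable dominating functions $\pm|f_n|$ whose integrals converge to $\pm\Vert f\Vert_1$, using that $|f_n|\to|f|$ in $L^1$), I obtain $\int\phi_n\,\dif t\to\rho_\infty(f,g)$ along that subsequence; as the limit is independent of the subsequence, the full sequence converges.

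With the lemma in hand I would upgrade the mode of convergence through a Skorokhod representation. Since $L^1$ is separable and the Gaussian limit $\mathbb{X}_{d,F}$ is almost surely $L^1$-valued, the weak convergence $\mathbb{X}_{d,n}\cw \mathbb{X}_{d,F}$ furnishes versions $\widetilde{\mathbb{X}}_{d,n},\widetilde{\mathbb{X}}_{d,F}$ on a common probability space with $\widetilde{\mathbb{X}}_{d,n}\to\widetilde{\mathbb{X}}_{d,F}$ almost surely in $L^1$. Applying the lemma pathwise gives $\rho_n(\widetilde{\mathbb{X}}_{d,n},g_d)\to\rho_\infty(\widetilde{\mathbb{X}}_{d,F},g_d)$ almost surely, hence in distribution; transferring back via the equalities in law $\rho_n(\widetilde{\mathbb{X}}_{d,n},g_d)\overset{d}{=}\delta_n(d,F)$ and $\rho_\infty(\widetilde{\mathbb{X}}_{d,F},g_d)\overset{d}{=}\delta_\infty(d,F)$ yields the stated convergence.

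I expect the main obstacle to be the deterministic lemma, precisely the simultaneous handling of the diverging factor $\sqrt{n}$ and the $n$-dependent dominating function. A naive dominated convergence theorem fails because the dominating function $|f_n|$ changes with $n$; it is the combination of the reverse-triangle bound with Pratt's lemma that makes the argument work. Some additional care is needed on the set $I(g_d)$, where $g_d$ may vanish on a set of positive measure, to ensure the two pointwise regimes glue precisely into the two-term expression for $\delta_\infty(d,F)$ in (\ref{delta-infinity}).
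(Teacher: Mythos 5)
Your proposal is correct and follows essentially the same route as the paper: both rest on the representation $\delta_n(d,F)=\rho_n(\mathbb{X}_{d,n},g_d)$, the deterministic convergence $\rho_n(f_n,g)\to\rho(f,g)$ for $f_n\to f$ in $L^1$, and the weak convergence of $\mathbb{X}_{d,n}$ from \cref{Theorem SP Wasserstein,Theorem SP Zolotarev}. The only differences are that you prove the key deterministic lemma yourself (via the reverse-triangle bound and Pratt's lemma) where the paper cites it from C\'{a}rcamo (2017, Lemma 4), and you transfer to convergence in distribution via a Skorokhod representation rather than the extended continuous mapping theorem; both substitutions are sound.
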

%%%%%%% END OF THEOREM 3 %%%%%%%%%%%%%%

The next corollary, a direct consequence of Theorem \ref{Theorem 3}, provides the asymptotic distribution of $\delta_n(d,F)$, when $F$ is an exponential distribution function. In such a case, $d(F,G_\mu)=0$ and the estimators behave as a random quantity at the order of $O_\Prob(1/\sqrt{n})$. It is interesting to note that the limiting distribution of the normalized distances does not depend on the unknown mean of the exponential distribution.

%%%%%%% START OF COROLLARY 1 %%%%%%%%%%%%%%
\begin{corollary}\label{Corollary exponential}
For the processes $\mathbb{X}_{d,F}$ defined in (\ref{XFW})-(\ref{XFZB}), if $X$ follows an exponential distribution with mean $\mu$, then
\begin{enumerate} \label{Corollary.Asymptotic.Exponential}
\item[(a)] $\sqrt{n} \, \omega(F_n,G_{\hat\mu}) \cd \Vert \mathbb{X}_{\omega,G_\mu} \Vert_1=\mu\, \Vert \mathbb{X}_{\omega,G_1} \Vert_1$;
\item[(b)] $\sqrt{n} \, \bar\omega(F_n,G_{\hat\mu}) \cd \Vert \mathbb{X}_{\bar\omega,G_1} \Vert_1 = \Vert \mathbb{X}_{\omega,G_1} \Vert_1$;
\item[(c)] $\sqrt{n} \, \zeta_2(F_n,G_{\hat\mu}) \cd \Vert \mathbb{X}_{\zeta_2,G_\mu} \Vert_1 = \mu^2\, \Vert \mathbb{X}_{\zeta_2,G_1} \Vert_1$;
\item[(d)] $\sqrt{n} \, \bar\zeta_2(F_n,G_{\hat\mu}) \cd \Vert \mathbb{X}_{\bar\zeta_2,G_1} \Vert_1 = \Vert \mathbb{X}_{\zeta_2,G_1} \Vert_1$.
\end{enumerate}
\end{corollary}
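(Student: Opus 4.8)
The plan is to read the four limits directly off Theorem~\ref{Theorem 3} and then reduce every $G_\mu$-indexed limit to the single $G_1$-indexed case by an elementary scaling argument. The starting observation is that if $X$ is exponential with mean $\mu$ then $F=G_\mu$, so all four driving functions vanish identically, $g_\omega=g_{\bar\omega}=g_{\zeta_2}=g_{\bar\zeta_2}\equiv 0$, by (\ref{funtions omega})--(\ref{funtions zeta}). Hence $d(F,G_\mu)=0$, so $\delta_n(d,F)=\sqrt n\,d(F_n,G_{\hat\mu})$, and moreover $I(g_d)=\{t\ge 0:g_d(t)=0\}=[0,\infty)$ with empty complement. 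Feeding this into (\ref{delta-infinity}) collapses the signed integral to zero and turns the first integral into the full $L^1$-norm, so Theorem~\ref{Theorem 3} (whose hypotheses hold trivially, since an exponential $X$ lies in $\mathcal{L}^{4,2}\subset\mathcal{L}^{2,1}$) gives at once $\sqrt n\,d(F_n,G_{\hat\mu})\cd\Vert\mathbb{X}_{d,G_\mu}\Vert_1$ for each $d=\omega,\bar\omega,\zeta_2,\bar\zeta_2$. This already yields the left-hand convergences in (a) and (c); for (b) and (d) it yields convergence to $\Vert\mathbb{X}_{\bar\omega,G_\mu}\Vert_1$ and $\Vert\mathbb{X}_{\bar\zeta_2,G_\mu}\Vert_1$, which I must still identify with the $G_1$-expressions in the statement.

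The second step is a change of scale in the limiting processes. The key identity is $\B_{G_\mu}(t)=\B(G_\mu(t))=\B(G_1(t/\mu))=\B_{G_1}(t/\mu)$, valid pathwise for one and the same Brownian bridge $\B$ because $G_\mu(t)=G_1(t/\mu)$. Substituting $u=s/\mu$ in the integrals appearing in (\ref{XFW})--(\ref{XFZB}) (and using $g_{\bar\omega}\equiv g_{\bar\zeta_2}\equiv 0$) I expect the pathwise identities $\mathbb{X}_{\omega,G_\mu}(t)=\mathbb{X}_{\omega,G_1}(t/\mu)$ and $\mathbb{X}_{\zeta_2,G_\mu}(t)=\mu\,\mathbb{X}_{\zeta_2,G_1}(t/\mu)$. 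A final change of variables $v=t/\mu$ in the $L^1$-norm then produces the scale factors $\Vert\mathbb{X}_{\omega,G_\mu}\Vert_1=\mu\,\Vert\mathbb{X}_{\omega,G_1}\Vert_1$ and $\Vert\mathbb{X}_{\zeta_2,G_\mu}\Vert_1=\mu^2\,\Vert\mathbb{X}_{\zeta_2,G_1}\Vert_1$, which are exactly the right-hand equalities in (a) and (c). Since both sides are built from the same $\B$, these are almost-sure identities between random variables, so in particular equalities in distribution, which is all the statement requires.

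For the normalized distances in (b) and (d) I would first note that at $\mu=1$ the prefactors $1/\mu$ and $1/\mu^2$ are trivial and $g_{\bar\omega}=g_{\bar\zeta_2}=0$, so the defining formulas give $\mathbb{X}_{\bar\omega,G_1}=\mathbb{X}_{\omega,G_1}$ and $\mathbb{X}_{\bar\zeta_2,G_1}=\mathbb{X}_{\zeta_2,G_1}$, hence the second equalities $\Vert\mathbb{X}_{\bar\omega,G_1}\Vert_1=\Vert\mathbb{X}_{\omega,G_1}\Vert_1$ and $\Vert\mathbb{X}_{\bar\zeta_2,G_1}\Vert_1=\Vert\mathbb{X}_{\zeta_2,G_1}\Vert_1$. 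To close the remaining gap I would run the same scaling computation on the normalized processes, obtaining $\mathbb{X}_{\bar\omega,G_\mu}(t)=\mu^{-1}\mathbb{X}_{\omega,G_1}(t/\mu)$ and $\mathbb{X}_{\bar\zeta_2,G_\mu}(t)=\mu^{-1}\mathbb{X}_{\zeta_2,G_1}(t/\mu)$; the factor $\mu^{-1}$ cancels against the Jacobian $\mu$ of the substitution $v=t/\mu$, so $\Vert\mathbb{X}_{\bar\omega,G_\mu}\Vert_1=\Vert\mathbb{X}_{\omega,G_1}\Vert_1$ and $\Vert\mathbb{X}_{\bar\zeta_2,G_\mu}\Vert_1=\Vert\mathbb{X}_{\zeta_2,G_1}\Vert_1$, matching the limits asserted in (b) and (d). There is no genuine analytic obstacle, since everything rests on Theorem~\ref{Theorem 3}; the only points requiring care are the bookkeeping of the $\mu$-powers and Jacobians, and the observation that the vanishing of the $g_d$ is precisely what makes the signed contribution in (\ref{delta-infinity}) disappear and what makes the normalized and unnormalized limiting processes coincide at $\mu=1$.
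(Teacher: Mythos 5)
Your proposal is correct and follows exactly the route the paper intends: the paper states the corollary as a direct consequence of Theorem~\ref{Theorem 3}, obtained by noting that $F=G_\mu$ forces $g_d\equiv 0$ so that $\delta_\infty(d,F)$ in (\ref{delta-infinity}) collapses to $\Vert\mathbb{X}_{d,G_\mu}\Vert_1$, with the $\mu$-independence of the normalized limits coming from the same scaling identity $\B_{G_\mu}(t)=\B_{G_1}(t/\mu)$ that you use. Your bookkeeping of the $\mu$-powers and Jacobians is accurate throughout.
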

%%%%%%% END OF COROLLARY 1 %%%%%%%%%%%%%%

The following corollary states that when $X$ does not share any part of its distribution function with the exponential one, the limiting distribution $\delta_\infty(d,F)$ in (\ref{delta-infinity}) is actually normal, for all the considered distances.

%%%%%%% START OF COROLLARY 2 %%%%%%%%%%%%%%
\begin{corollary}\label{Corrolary normal}
Let us assume that the conditions of Theorem \ref{Theorem 3} hold and let us further assume that the set $I({g_d})$ (defined in Theorem \ref{Theorem 3}) has zero Lebesque measure. Then, $\delta_\infty(d,F)$ defined in (\ref{delta-infinity}) has a zero mean normal distribution.
\end{corollary}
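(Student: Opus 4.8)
The plan is to examine the structure of $\delta_\infty(d,F)$ in (\ref{delta-infinity}) under the extra hypothesis that $I(g_d)$ is Lebesgue-null. When this set is negligible, the first integral in (\ref{delta-infinity}) vanishes and the limiting random variable reduces to the single term
\begin{equation*}
\delta_\infty(d,F)=\int_{I(g_d)^c} \mathbb{X}_{d,F}(t)\,\sgn(g_d(t))\,\dif t = \int_0^\infty \mathbb{X}_{d,F}(t)\,\sgn(g_d(t))\,\dif t,
\end{equation*}
where the last equality holds up to a null set. The key observation is that this is now a \emph{linear} (rather than absolute-value) functional of the process $\mathbb{X}_{d,F}$: it is an integral of the Gaussian process $\mathbb{X}_{d,F}$ against the fixed deterministic weight $\sgn(g_d(\cdot))$, which takes values in $\{-1,+1\}$ almost everywhere.

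First I would argue that $\mathbb{X}_{d,F}$, being a centered Gaussian process in $L^1$ (as established in Theorems~\ref{Theorem SP Wasserstein} and \ref{Theorem SP Zolotarev}), has paths that are integrable against the bounded weight $\sgn(g_d)$, so the integral is well defined almost surely. Then the central point is that a (measurable) linear functional of a Gaussian process is itself Gaussian. Concretely, I would approximate the integral by Riemann-type sums $\sum_k \mathbb{X}_{d,F}(t_k)\,\sgn(g_d(t_k))\,\Delta t_k$; each such sum is a finite linear combination of the jointly Gaussian coordinates $\mathbb{X}_{d,F}(t_k)$ and is therefore a one-dimensional centered Gaussian variable. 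Since the class of centered Gaussian laws on $\R$ is closed under convergence in distribution (the limit of centered normals is centered normal, provided the variances converge to a finite limit), the $L^1$-convergence of the integrand together with the finiteness of the limiting variance yields that $\delta_\infty(d,F)$ is centered normal. The variance is finite because $\mathbb{X}_{d,F}\in L^1$ with the explicit covariance structure inherited from the $F$-Brownian bridge, so $\Var(\delta_\infty(d,F))=\int_0^\infty\int_0^\infty \sgn(g_d(s))\sgn(g_d(t))\,\text{Cov}(\mathbb{X}_{d,F}(s),\mathbb{X}_{d,F}(t))\,\dif s\,\dif t$ is a convergent double integral; the mean is zero because $\mathbb{X}_{d,F}$ is centered and the weight is deterministic, so linearity of expectation gives $\E\,\delta_\infty(d,F)=0$.

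The main obstacle I anticipate is the rigorous justification that the integral $\int_0^\infty \mathbb{X}_{d,F}(t)\,\sgn(g_d(t))\,\dif t$ genuinely defines a Gaussian random variable, rather than merely handling the finite-dimensional marginals. The delicate point is the passage to the limit in the approximating sums: one must verify that these sums converge (in $L^2$ or in probability, say) to the integral and that the limiting variance is finite, so that the Gaussianity is preserved in the limit. This requires controlling the tail contributions via the integrability of $\mathbb{X}_{d,F}$ in $L^1$ and the explicit exponential decay built into the processes (\ref{XFW})--(\ref{XFZB}) through the factors $e^{-t/\mu}$. Once this convergence is secured, the conclusion that $\delta_\infty(d,F)$ is a zero-mean normal variable follows directly.
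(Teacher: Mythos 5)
Your proposal is correct and follows essentially the same route as the paper: since $I(g_d)$ is Lebesgue-null, $\delta_\infty(d,F)$ reduces to $\int_0^\infty \mathbb{X}_{d,F}(t)\,\sgn(g_d(t))\,\dif t$, a linear functional of the centered Gaussian process $\mathbb{X}_{d,F}$ against the deterministic weight $\sgn(g_d)$, hence a zero-mean normal variable. The paper states this in two lines; your additional discussion of Riemann-sum approximation and the finiteness of the limiting variance simply makes explicit what the paper leaves implicit.
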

%%%%%%% END OF COROLLARY 2 %%%%%%%%%%%%%%

The previous results could be useful to compute (asymptotic) confidence intervals for $d(F,G_\mu)$.
This can be implemented via the following bootstrap procedure: as $F$ is usually unknown, we substitute $F$ for $F_n$ in the limiting processes obtained in Theorems
\ref{Theorem SP Wasserstein} and \ref{Theorem SP Zolotarev} (equations (\ref{XFW})-(\ref{XFZB})).  Next, we simulate
a large number of trajectories of the processes to obtain a Monte Carlo approximation of the asymptotic distribution $\delta_\infty(d,F)$ given in Theorem \ref{Theorem 3}. Finally, we use the Monte Carlo sample quantiles to construct the desired interval. We also note that if we assume that the Lebesgue measure of the sets $I(g_d)$ is zero, then the procedure is simpler as Corollary \ref{Corrolary normal} ensures that the limit distribution is a zero mean normal distribution. Hence, in such a case it is enough to estimate the asymptotic variance of the limit via Monte Carlo and use the quantiles of a normal distribution. This latter interval is called \textit{standard normal interval} in \cite[p. 168]{Efron-Tibshirani}. However, let us observe that asymptotic confidence intervals could be unprecise when the sample size is small. As the distribution of $\delta_n(d,F)$ for a fixed $n$ could be extremely difficult to handle, an interesting alternative is to construct a bootstrap confidence interval of $d(F,G_\mu)$. In this situation, the \textit{percentile interval} proposed in \cite[p. 170]{Efron-Tibshirani} is a reasonable choice.

As $d(F,G_\mu)=0$ is equivalent to saying that $F$ is exponential, the considered distances can be additionally applied to goodness-of-fit tests for
$\text{H}_0: F \text{ is exponential}$.
As stated in Corollary \ref{Corollary exponential}, for $d=\bar\omega, \bar\zeta_2$, the asymptotic distribution of the test statistic $\sqrt{n} \, d(F_n,G_{\hat\mu})$ is completely determined under $\text{H}_0$. In practice, this result allows us to derive an asymptotic rejection region by Monte Carlo sampling from the asymptotic distribution. When the sample size $n$ is small, a better alternative is to use a parametric bootstrap procedure by sampling from an exponential distribution with mean $\hat \mu$.

% SECTION 4: SIMULATIONS

\section{Simulations} \label{Section.Simulations}

The aim of this section is to analyze the finite-sample behaviour of the statistic $\delta_n(d,F)$ given in (\ref{normalized and estimated distance}), in particular to compare it with its asymptotic distribution $\delta_\infty(d,F)$ obtained in Theorem~\ref{Theorem 3}. We only consider the normalized versions of the distances, $d=\bar\omega$ and $d=\bar\zeta_2$.
%For ease of interpretability and to avoid scale dependency we have not included the other two metrics, $d=\omega$ and $d=\zeta_2$.
In any case, the simulations results for the unnormalized metrics are similar and do not add relevant information.

To compute the normalized empirical distances $\bar\omega(F_n,G_{\hat\mu}) = \omega(F_n,G_{\hat\mu})/\hat\mu$ and $\bar\zeta_2(F_n,G_{\hat\mu}) = \zeta_2(F_n,G_{\hat\mu})/\hat\mu^2$, we have used the following equalities%, which can be obtained by some simple manipulations:
\begin{equation} \label{EmpWassDist}
\omega(F_n,G_{\hat\mu}) = X_{(1)} - \hat\mu \, G_{\hat\mu}(X_{(1)}) + \int_{X_{(1)}}^{X_{(n)}} |F_n(x)-G_{\hat\mu}(x)|\,\dif x + \hat\mu \, e^{-X_{(n)}/\hat\mu}
\end{equation}
and
\begin{eqnarray}
\lefteqn{\zeta_2(F_n,G_{\hat\mu}) = 2 \int_0^\infty\left( \int_0^t (F_n(x)-G_{\hat\mu}(x))\,\dif x \right)_+ \, \dif t + \hat\mu^2 - \frac{a_2}{2}} \nonumber \\
 & & = 2 \int_{X_{(1)}}^{X_{(n)}} \left( -X_{(1)} + \hat\mu \, G_{\hat\mu}(X_{(1)}) + \int_{X_{(1)}}^t (F_n(x)-G_{\hat\mu}(x))\,\dif x \right)_+ \dif t + \hat\mu^2 - \frac{a_2}{2} , \label{EmpZolDist}
\end{eqnarray}
where $X_{(1)}\le\cdots\le X_{(n)}$ are the order statistics of the sample and $a_2 : = \sum_{i=1}^n X_i^2/n$. Even though the integrals appearing in (\ref{EmpWassDist}) and (\ref{EmpZolDist}) can be expressed in terms of the order statistics, from a computational viewpoint it is more convenient to approximate them numerically: this was carried out by discretizing the integral on the equispaced grid $X_{(1)}+ k\, \delta$ with $k=0,1,2,\ldots,20000$ and $\delta= (X_{(n)}-X_{(1)})/20000$.

The asymptotic distribution $\delta_\infty(d,F)$ can be approximately sampled by generating trajectories of the Brownian bridge $\B_{F}(t)$ on a bounded time interval $[0,T]$ and then approximating the integrals such as $\int_0^\infty \B_{F}(t) \, \dif t$ by their discretization on $[0,T]$. In our work, we have chosen $T$ as the smallest integer larger or equal to $F^{-1}(1-{\tt tol})$, where {\tt tol} is a tolerance limit equal to $10^{-6}$. The integral discretization was carried out with an equispaced grid on $[0,T]$ yielding 50000 subintervals.

Computing the normalized version, $d=\bar\omega$ or $d=\bar\zeta_2$, of the distances $d(F,G_\mu)$ is equivalent to computing the distance of the re-scaled variable $X/\mu$ to the exponential distribution with mean $\mu=1$. As a consequence, in this Monte Carlo study the data-generating distributions have expectation $\mu=1$. Specifically, we have considered:
\begin{list}{-}{\leftmargin=0.4cm \itemindent=0em}
\item the exponential distribution with mean $\mu=1$;
\item the Weibull distribution with shape parameter $a>0$ and scale parameter $\lambda = 1/\Gamma(1+1/a)$, with probability density
$f(x) = a/\lambda \, (x/\lambda)^{a-1} \, e^{-(x/\lambda)^a}$, for $x>0$;
\item the gamma distribution with shape parameter $a>0$ and scale parameter $\lambda=1/a$, with density
$f(x) = a^a/\Gamma(a) \, x^{a-1} \, e^{-ax}$, for $x>0$.
\end{list}
For the Weibull distribution with mean 1 we have chosen the values $a=0.9$ and $a=1.1$ for the shape parameter. For the gamma distribution with mean 1, we have used shape parameters $a=0.9$ and $a=1.1$ (see Figure~\ref{Figure.DensitiesSimulations}).

In the simulations we have generated 10000 samples of size $n$ =  100, 500, 1000 and 5000 from each of these distributions. For each Monte Carlo sample, we have computed the statistic $\delta_n(d,F)$ for $d=\bar\omega$ and $d=\bar\zeta_2$. The programming language used in this work is R (\url{www.R-project.org}). The results of the simulations are summarized in Figures~\ref{Figure.Boxplots.Exp}, \ref{Figure.Boxplots.Wei11}, \ref{Figure.Boxplots.Wei09}, \ref{Figure.Boxplots.Gam11} and \ref{Figure.Boxplots.Gam09}. Each figure shows the evolution, as $n$ increases, of the boxplots of these statistics $\delta_n(d,F)$ towards the boxplot of its asymptotic distribution $\delta_\infty(d,F)$ given in (\ref{delta-infinity}). This latter boxplot is also based on 10000 samples from the corresponding limit distribution.

On the one hand, we observe that, the finite-sample behavior of $\delta_n(d,F)$, both for $d=\bar\omega$ and for $d=\bar\zeta$, is stable for the exponential distribution. In particular, the quartiles and median of $\delta_n(d,F)$ are very similar for any $n$. For nonexponential distributions $F$, the boxplot of $\delta_n(d,F)$ resembles that of $\delta_\infty(d,F)$ for large sample sizes ($n\geq 1000$). %This is a typical behavior of nonparametric function estimators such as the empirical distribution function, $F_n$. In a sense, it is the price to pay for estimating the distance between probability distributions (and not just summaries of these) and using an estimator of $F$ with no parametric assumptions.
On the other hand, we also observe that, the closer $F$ is to the exponential distribution, the larger $n$ has to be for the distribution of $\delta_n(d,F)$ to approach its limit.
%For the examples considered, this fact is especially apparent if we compare the two gamma cases with shape parameters $a=1.1$ and $a=1.2$.
This is reasonable: when $F$ is not exponential, but close to it and the set $I({g_d})$ has zero Lebesgue measure, the finite sample behavior of $\delta_n(d,F)$ is almost as if $F$ were exponential, but the limit is actually Gaussian (see Corollary~\ref{Corrolary normal}).

\begin{figure}
\begin{center}
\includegraphics[width=0.6\textwidth]{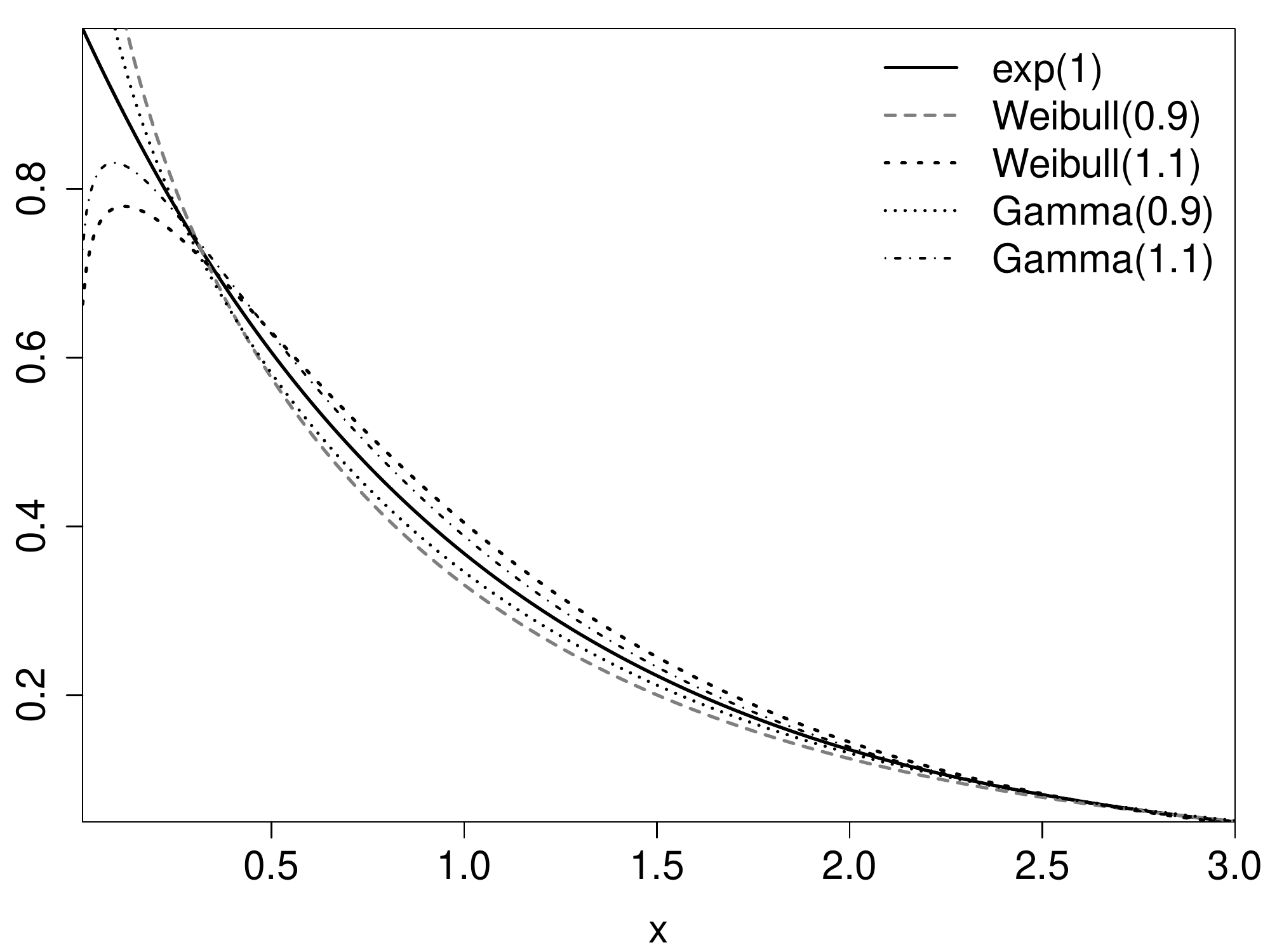}
\end{center}
\caption{Probability densities in the simulation study of Section~\ref{Section.Simulations}.}
\label{Figure.DensitiesSimulations}
\end{figure}

\begin{figure}
\begin{center}
\begin{tabular}[b]{cc}
    \includegraphics[width=0.45\textwidth]{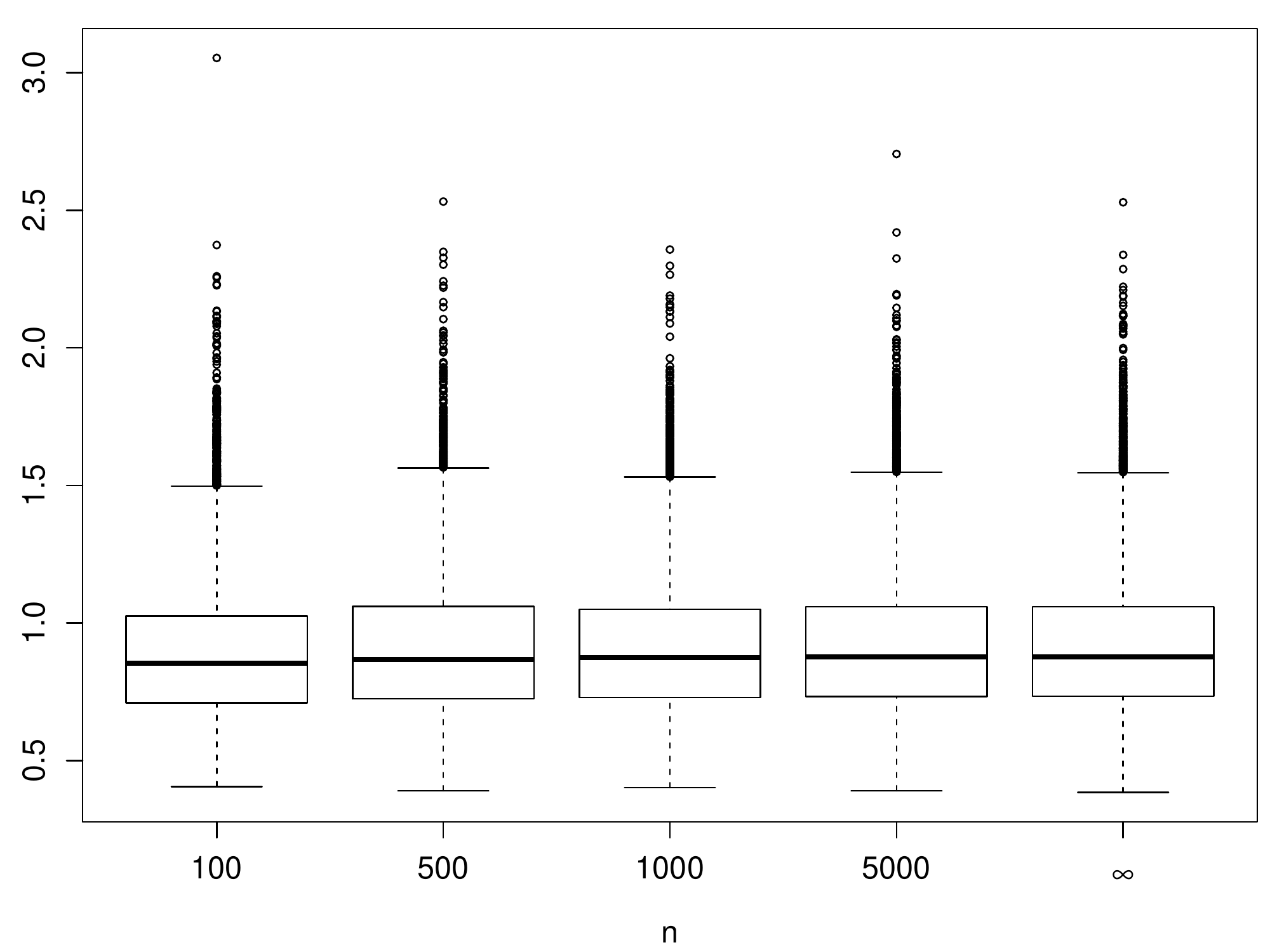} &
    \includegraphics[width=0.45\textwidth]{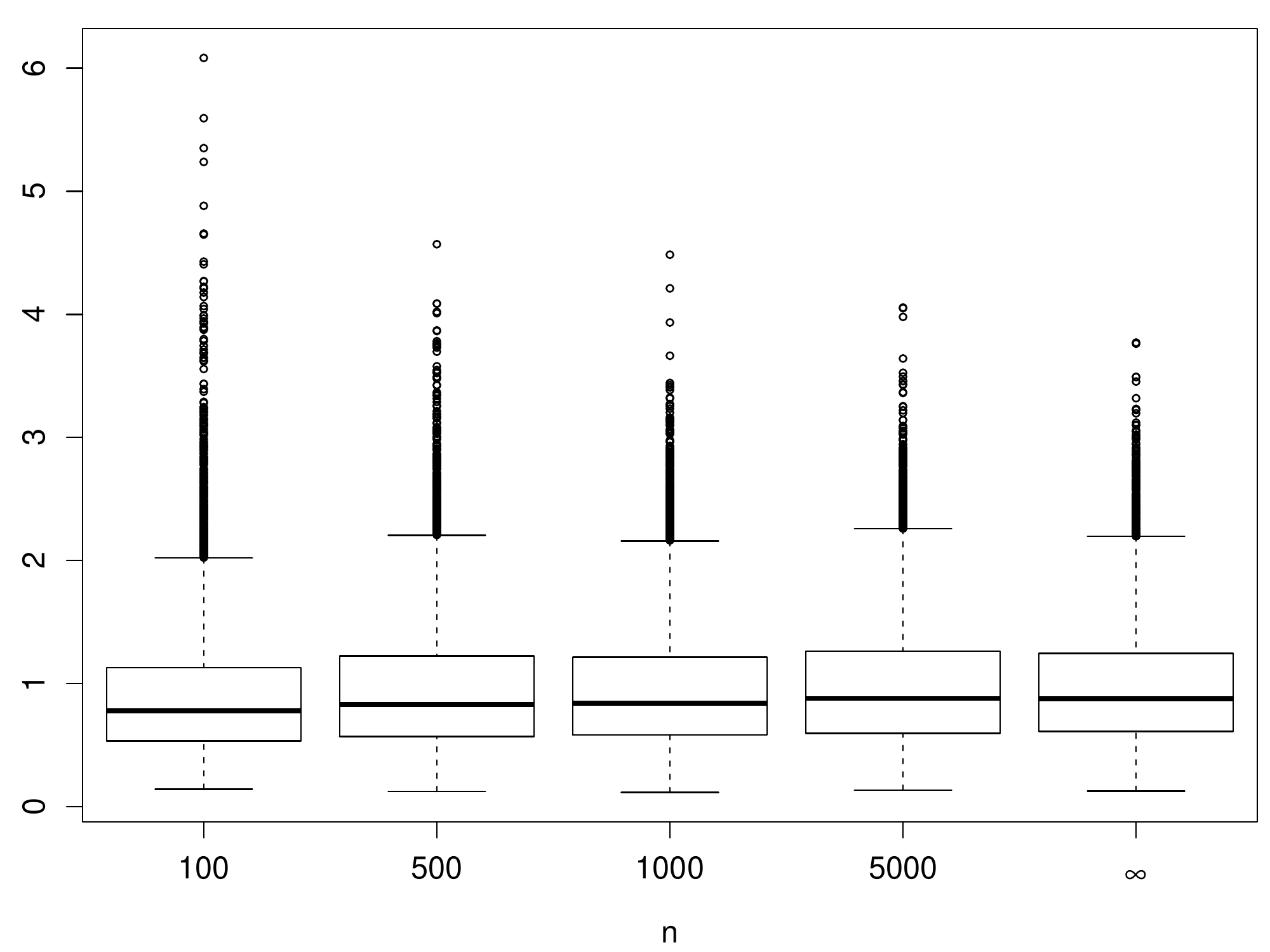} \\
    (a) & (b)
    \end{tabular}
\end{center}
\caption{Boxplots of $\delta_n(d,F)$ and its asymptotic distribution $\delta_\infty(d,F)$ for (a) $d=\bar\omega$ and (b) $d=\bar\zeta_2$. The distribution $F$ is exponential(1).}
\label{Figure.Boxplots.Exp}
\end{figure}

\begin{figure}
\begin{center}
\begin{tabular}[b]{cc}
    \includegraphics[width=0.45\textwidth]{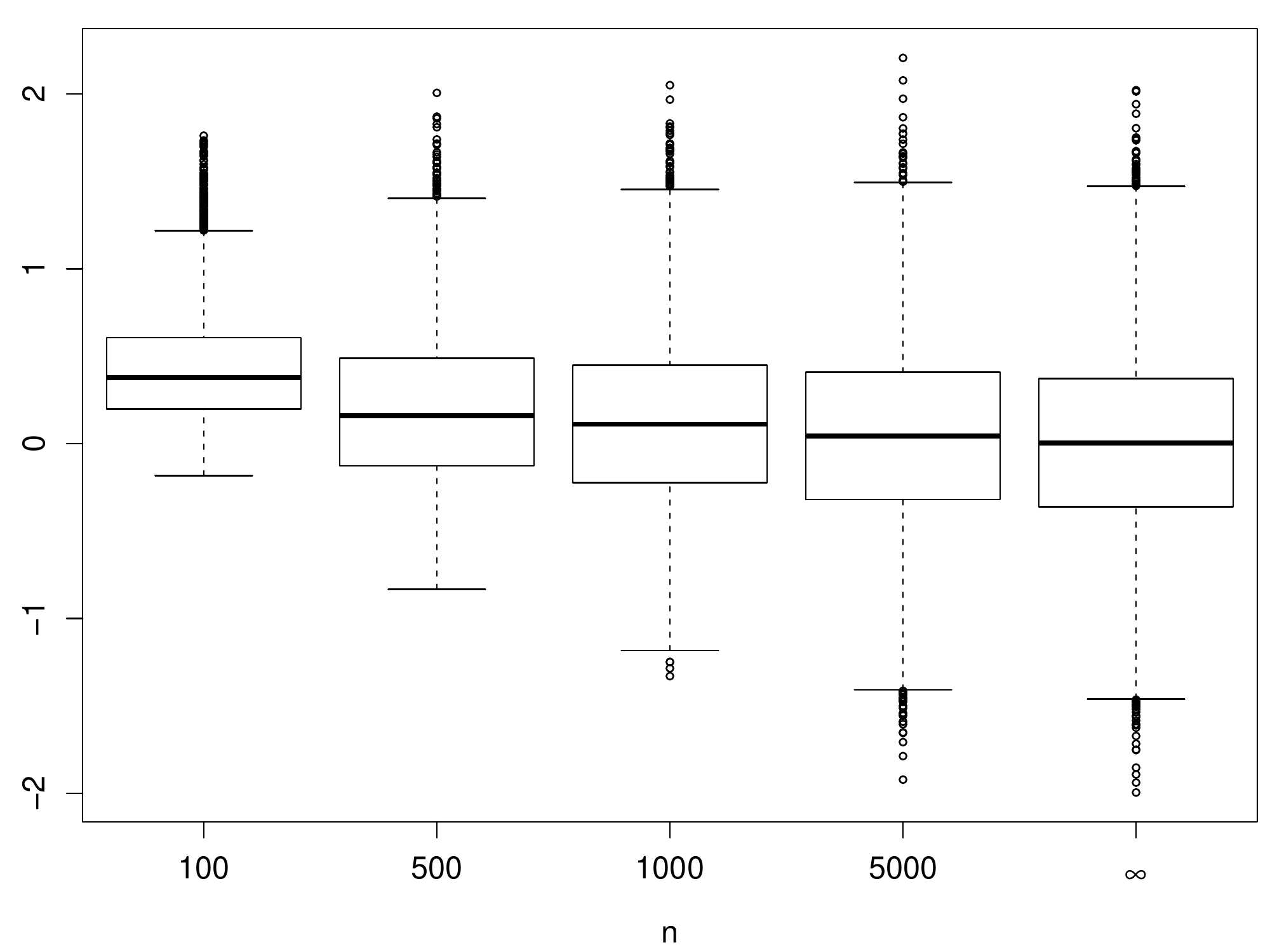} &
    \includegraphics[width=0.45\textwidth]{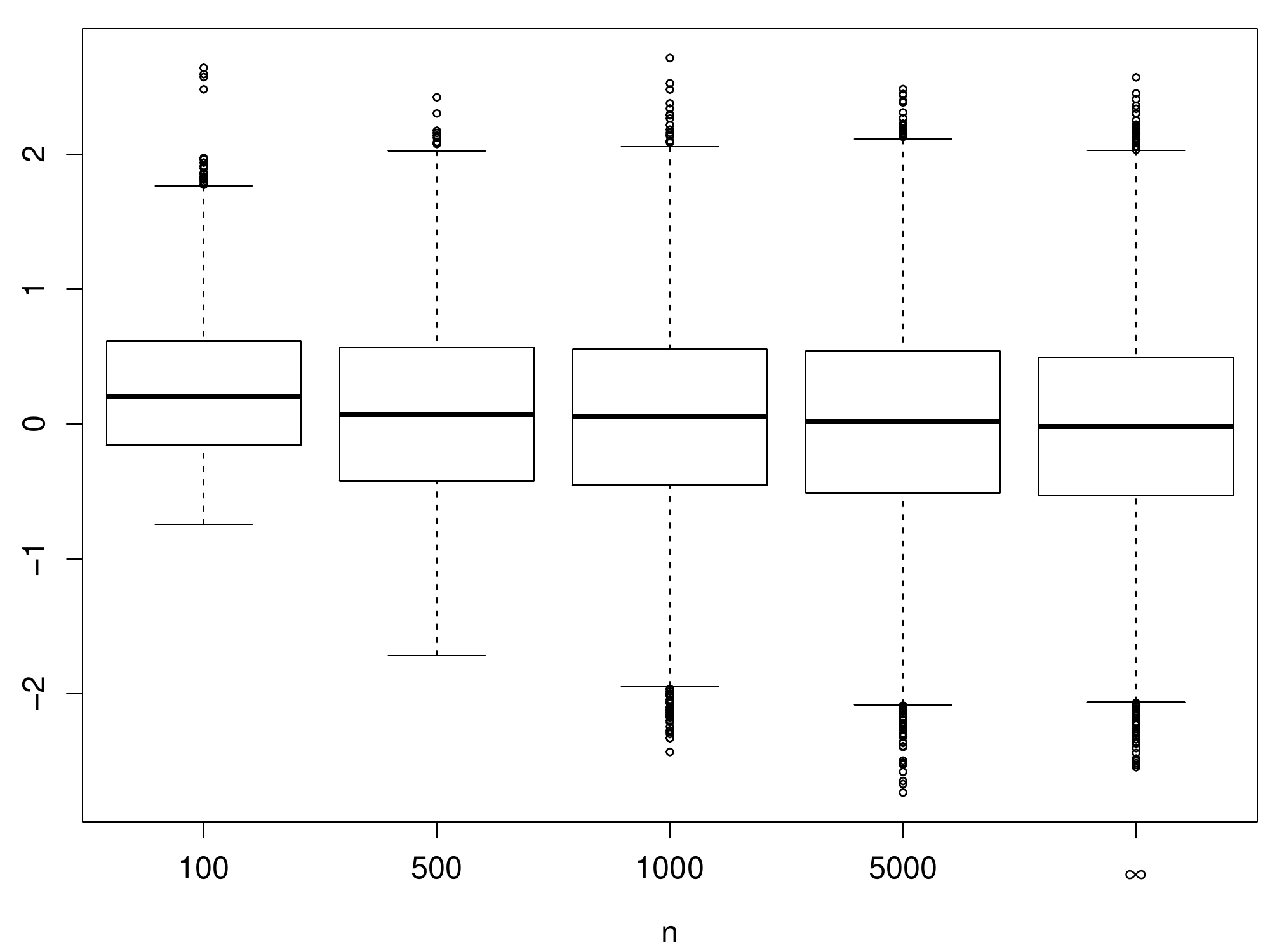} \\
    (a) & (b)
    \end{tabular}
\end{center}
\caption{Boxplots of $\delta_n(d,F)$ and its asymptotic distribution  $\delta_\infty(d,F)$ for (a) $d=\bar\omega$ and (b) $d=\bar\zeta_2$. The distribution $F$ is Weibull with shape parameter $a=1.1$.}
\label{Figure.Boxplots.Wei11}
\end{figure}

\begin{figure}
\begin{center}
\begin{tabular}[b]{cc}
    \includegraphics[width=0.45\textwidth]{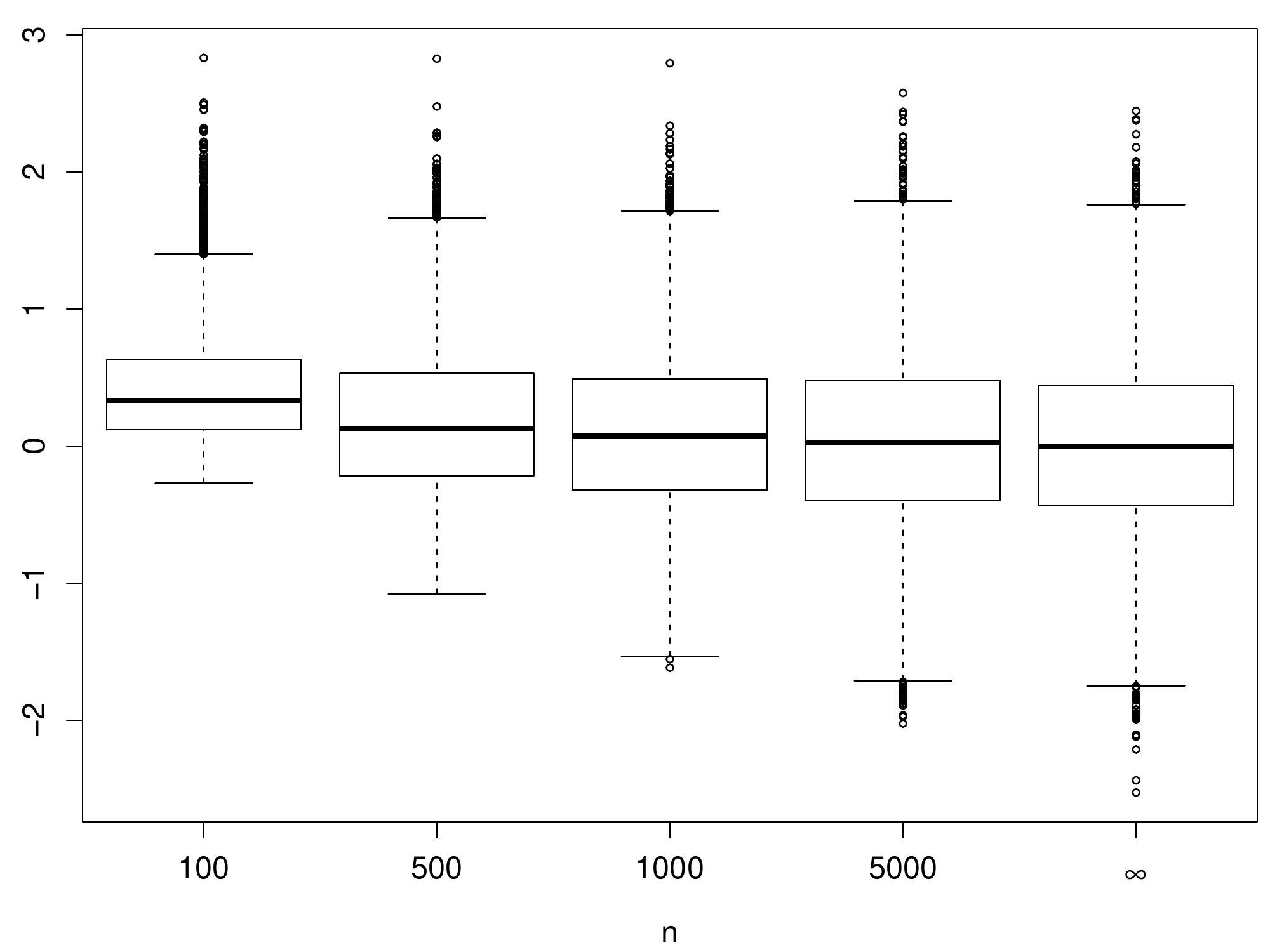} &
    \includegraphics[width=0.45\textwidth]{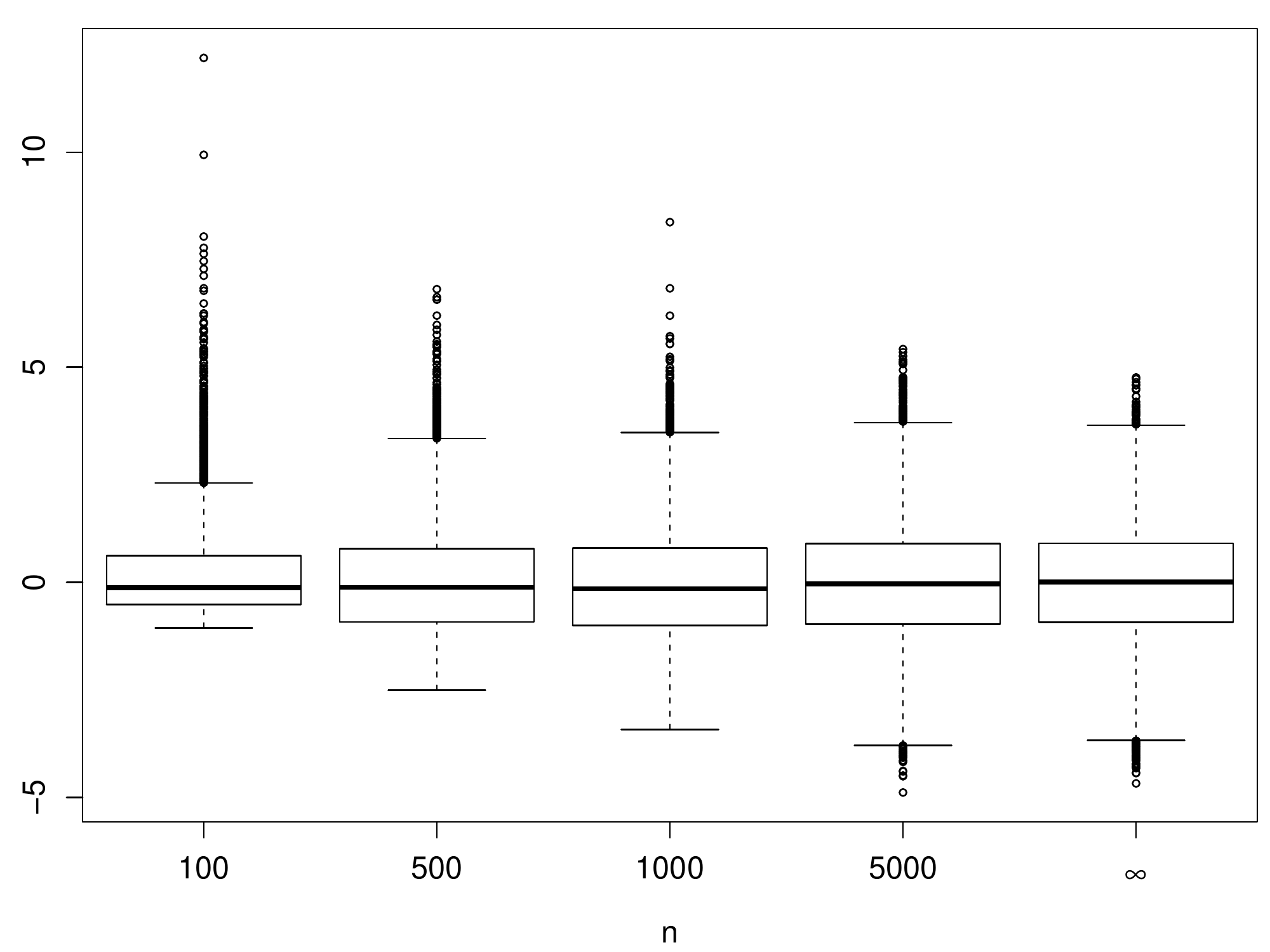} \\
    (a) & (b)
    \end{tabular}
\end{center}
\caption{Boxplots of $\delta_n(d,F)$ and its asymptotic distribution  $\delta_\infty(d,F)$ for (a) $d=\bar\omega$ and (b) $d=\bar\zeta_2$. The distribution $F$ is Weibull with shape parameter $a=0.9$.}
\label{Figure.Boxplots.Wei09}
\end{figure}

\begin{figure}
\begin{center}
\begin{tabular}[b]{cc}
    \includegraphics[width=0.45\textwidth]{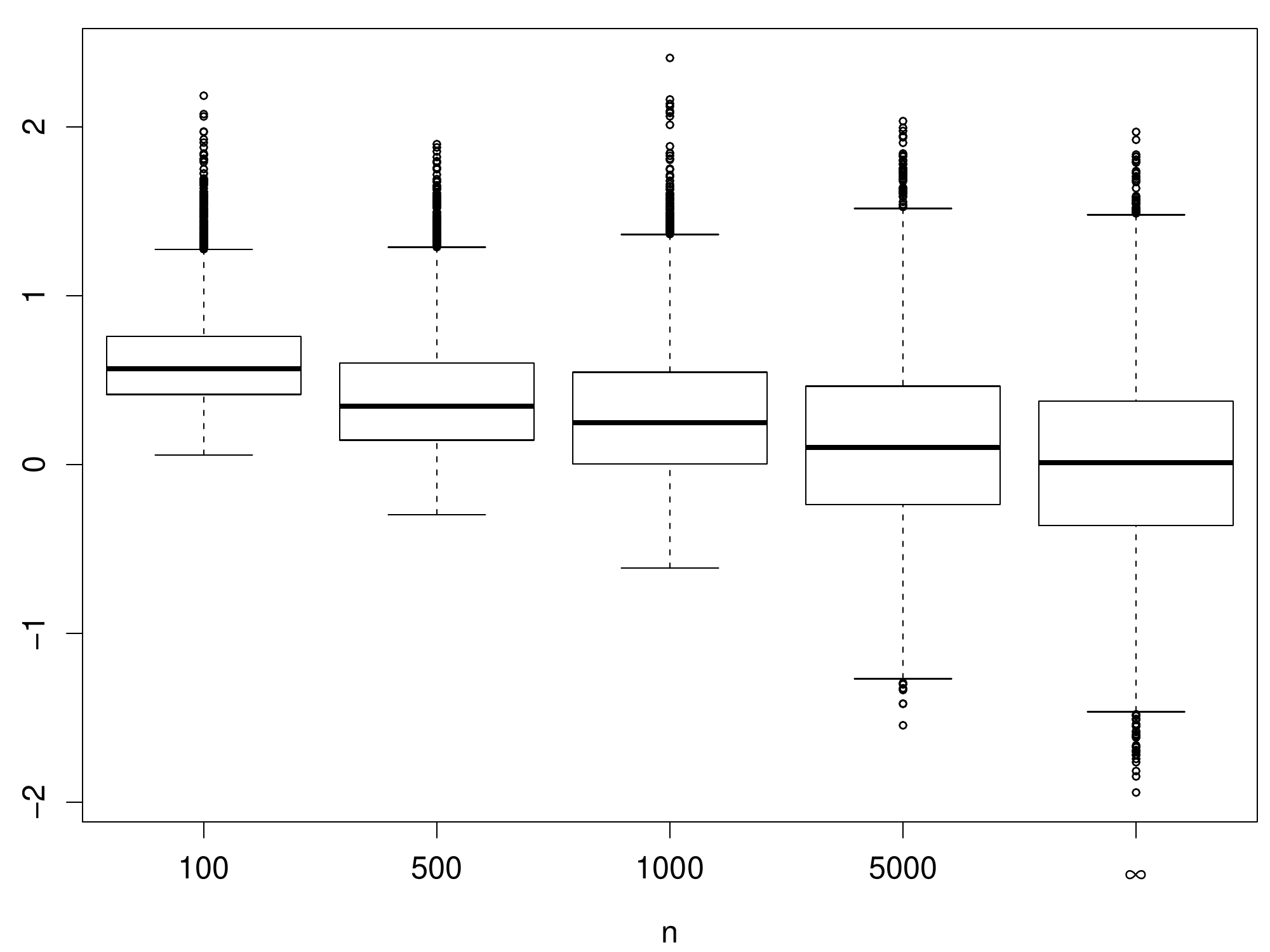} &
    \includegraphics[width=0.45\textwidth]{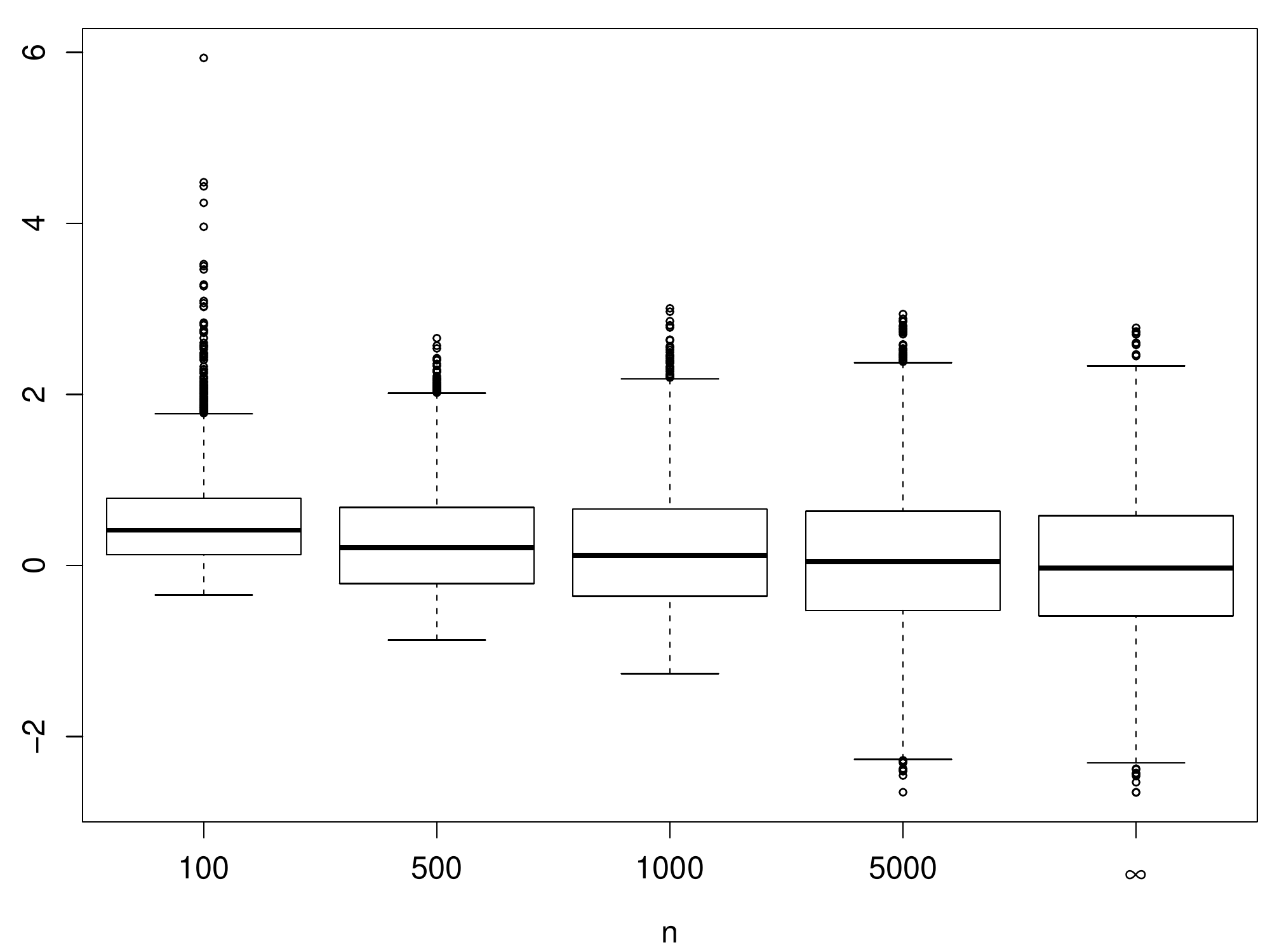} \\
    (a) & (b)
    \end{tabular}
\end{center}
\caption{Boxplots of $\delta_n(d,F)$ and its asymptotic distribution  $\delta_\infty(d,F)$ for (a) $d=\bar\omega$ and (b) $d=\bar\zeta_2$. The distribution $F$ is gamma with shape parameter $a=1.1$.}
\label{Figure.Boxplots.Gam11}
\end{figure}

\begin{figure}
\begin{center}
\begin{tabular}[b]{cc}
    \includegraphics[width=0.45\textwidth]{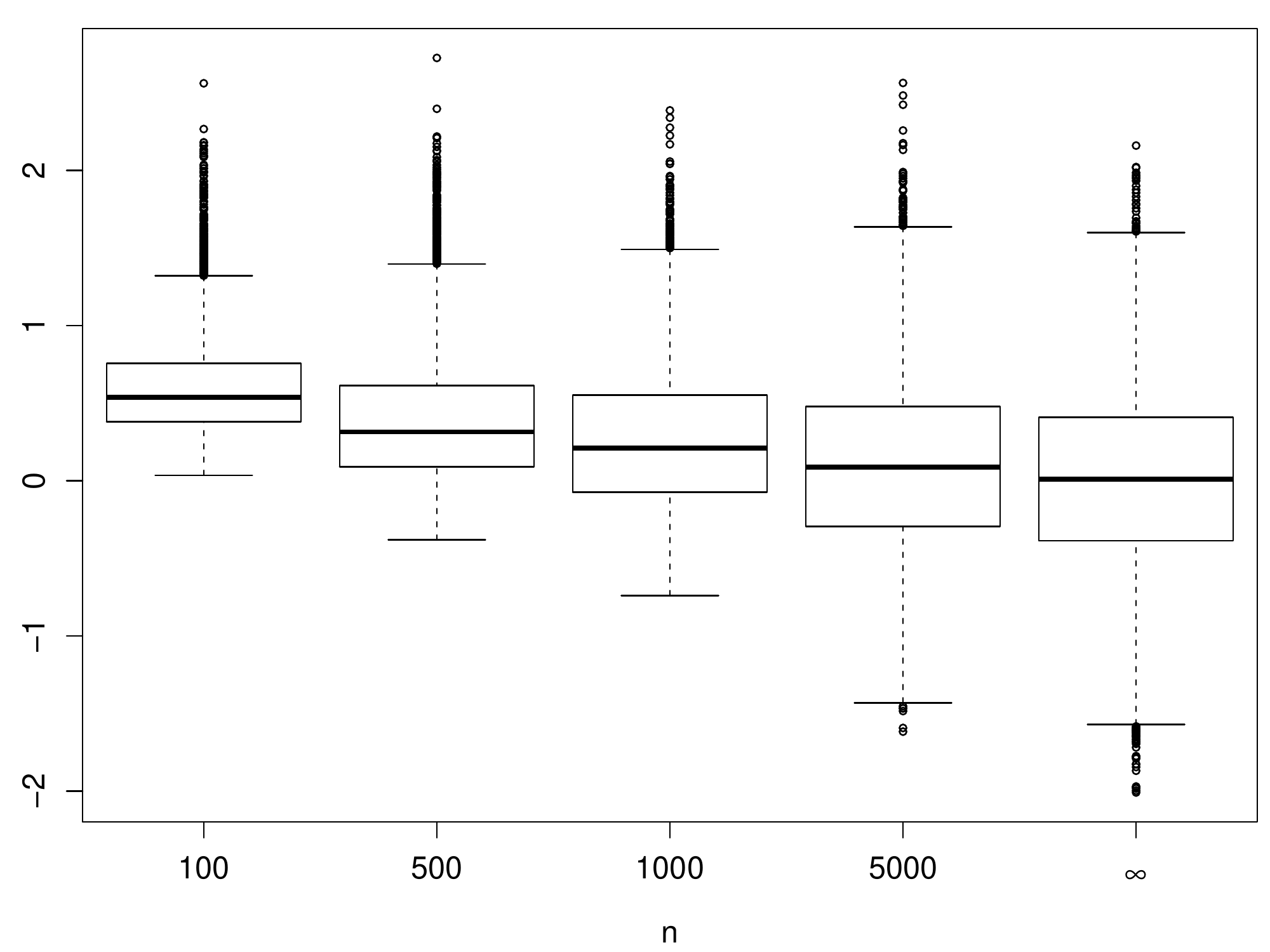} &
    \includegraphics[width=0.45\textwidth]{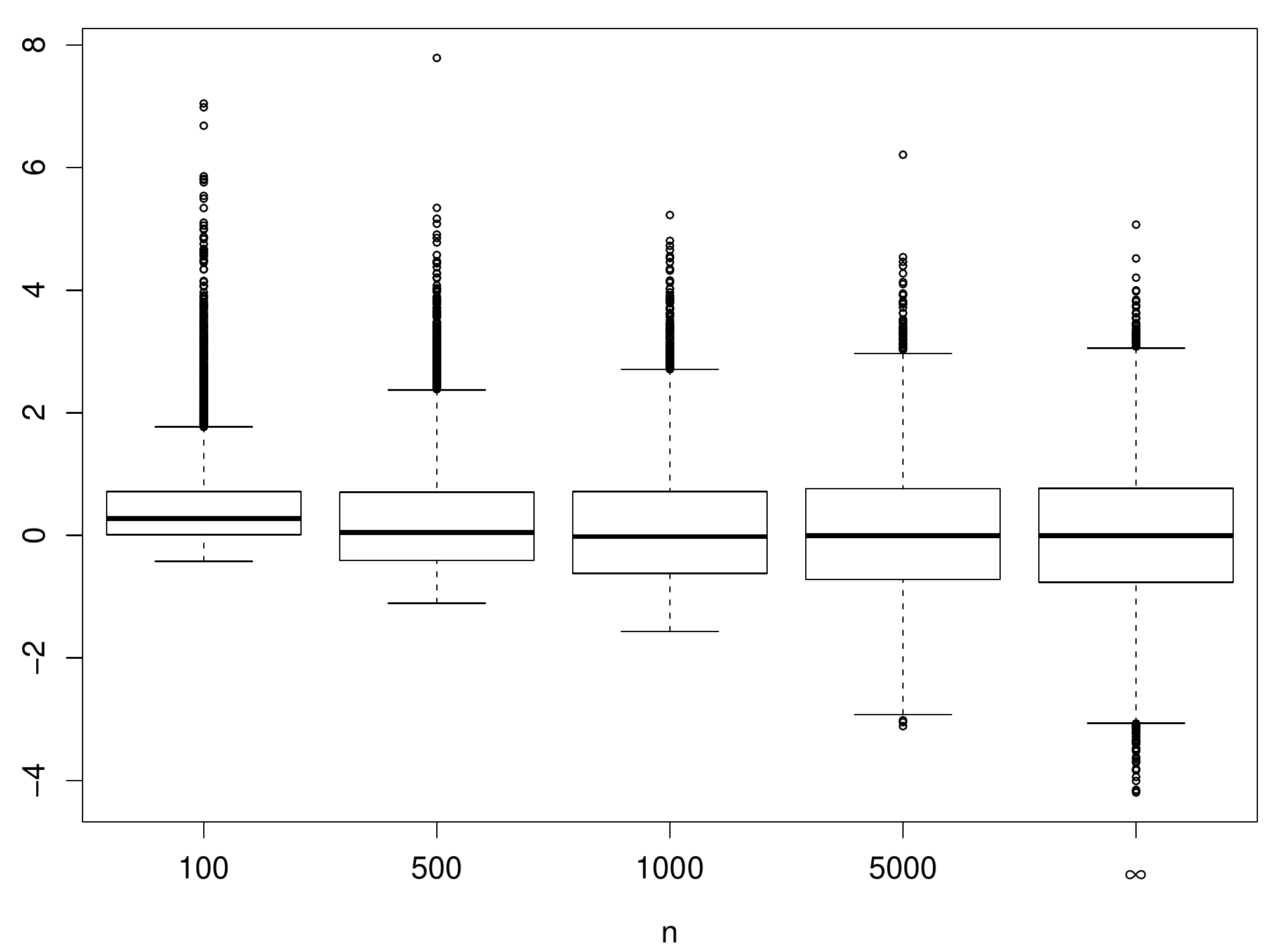} \\
    (a) & (b)
    \end{tabular}
\end{center}
\caption{Boxplots of $\delta_n(d,F)$ and its asymptotic distribution  $\delta_\infty(d,F)$ for (a) $d=\bar\omega$ and (b) $d=\bar\zeta_2$. The distribution $F$ is gamma with shape parameter $a=0.9$.}
\label{Figure.Boxplots.Gam09}
\end{figure}

% SECTION 5: ANALYSIS OF THE COUP DATA

\section{Analysis of the COUP data} \label{Section COUPAnalysis}

As mentioned in the introduction, photon interarrival times resulting from extragalactic radiation are usually well-modeled by exponential distributions, while, for the classes of PMS stars, PIT distributions might deviate from the exponential one. Hence, it is natural to compare different sources by computing the distances between their corresponding PIT and an exponential variable with the same mean.
In this section, we compute the empirical normalized Wasserstein and Zolotarev distances, $\bar\omega(F_n,G_{\hat\mu})$ and $\bar\zeta_2(F_n,G_{\hat\mu})$, to the exponential class for the sample of PIT of each X-ray source in the COUP dataset, described in Sections~\ref{Section_Introduction} and~\ref{Section_COUP_Dataset}. For the sake of comparison, we also include the usual Kolmogorov metric $\kappa(F_n,G_{\hat\mu})$ in the analysis as it is commonly used by astrophysicists in the classification of X-ray sources. The three types of sources of interest (namely, lightly-obscured PMS stars, heavily-obscured PMS stars and extragalactic) are compared via the values of these metrics.

For each of the X-ray sources in the three groups, we have computed the series of times between consecutive photon detections, taking into account the five observation gaps due to the passages through the high-radiation belts (see Section~\ref{Section_COUP_Dataset}).
We have also kept in mind that the complicated Chandra ACIS X-ray background, which is the combination of both celestial and instrumental backgrounds, tends to have peaks at energies below 0.5 keV and above 8 keV. To remove a significant portion of the X-ray background, we have followed \cite{Getman_etal05a}, who apply an energy filter to the data by cleaning X-ray events with energies out of the total (0.5-8) keV band.

In order to obtain reasonably good estimates of the distances to the exponential distribution, we have kept only the COUP series with at least 100 PIT. We have finally analyzed 1090 samples of PIT, of which 73, 644 and 373 had been previously classified as extragalactic sources, lightly-obscured and heavily-obscured PMS stars, respectively.
For each of these 1090 COUP sources, we have computed the distances $\kappa(F_n,G_{\hat\mu})$, $\bar\omega(F_n,G_{\hat\mu})$ and $\bar\zeta_2(F_n,G_{\hat\mu})$ of the empirical distribution of PIT to the exponential distribution with the same sample mean.

The interesting issue is whether the distance of PIT to the exponential distribution depends on the source class. The results are summarized in Figures~\ref{Figure.BoxplotsCOUP} and~\ref{Figure.DistribFnsCOUP}. In Figure~\ref{Figure.BoxplotsCOUP} we have displayed the boxplots of $\log(d(F_n,G_{\hat\mu}))$, for $d=\kappa,\, \bar\omega,\, \bar\zeta_2$, separated according to the three types of COUP sources. Outliers have been identified by the COUP source number (see \cite{Getman_etal05a}). As expected, for the three distances we see that the distribution of PIT due to extragalactic radiation is the nearest to the exponential distribution.
Moreover, Figure~\ref{Figure.DistribFnsCOUP} displays the empirical distribution functions of $\sqrt{n} \, d(F_n,G_{\hat\mu})$ ($d = \bar\omega,\, \bar\zeta_2$), for the different COUP groups and the distribution function of $\delta_\infty(d,G_1)$ when the distribution function is exponential with mean 1. The empirical distribution function corresponding to the extragalactic group is again the nearest to the distribution function of $\delta_\infty(d,G_1)$, the asymptotic distribution in Corollary~\ref{Corollary exponential}. Additionally, Figure~\ref{Figure.BoxplotsCOUP} shows that the normalized Zolotarev distance separates better the three classes than the normalized Wasserstein and the Kolmogorov-Smirnov metrics.
%: this comes as no surprise taking into account that the Zolotarev metric is more suitable to deal with the exponential class, as already mentioned in Section~\ref{Section Zolotarev}.
Observe also that the PIT corresponding to lightly-obscured PMS stars are noticeably nearer to the exponential than those of the heavily-obscured group.
A plausible explanation for this difference is the following: X-ray emission in heavily-obscured PMS stars is more affected by the intervening interstellar absorption, mainly the absorption from the gas in the molecular cloud and/or local absorption in an envelope or a disk around a star. Other factor might also be that the heavily absorbed sample is generally younger and more ``diskier'', in the sense that it has a higher fraction of stars that are still surrounded by their circumstellar disks; some of them highly accreting. Thus, the age and the presence/absence of disks could play an additional role, by affecting the X-ray production mechanisms of the strong X-ray flares in PMS stars and in turn affecting their X-ray photon arrival patterns (see \cite{Getman_etal08a} and \cite{Getman_etal08b} for differences in the strong X-ray flares between the diskless and disky-accreting stellar populations).

\begin{figure}
\begin{center}
\begin{tabular}[b]{c}
    \includegraphics[width=0.55\textwidth]{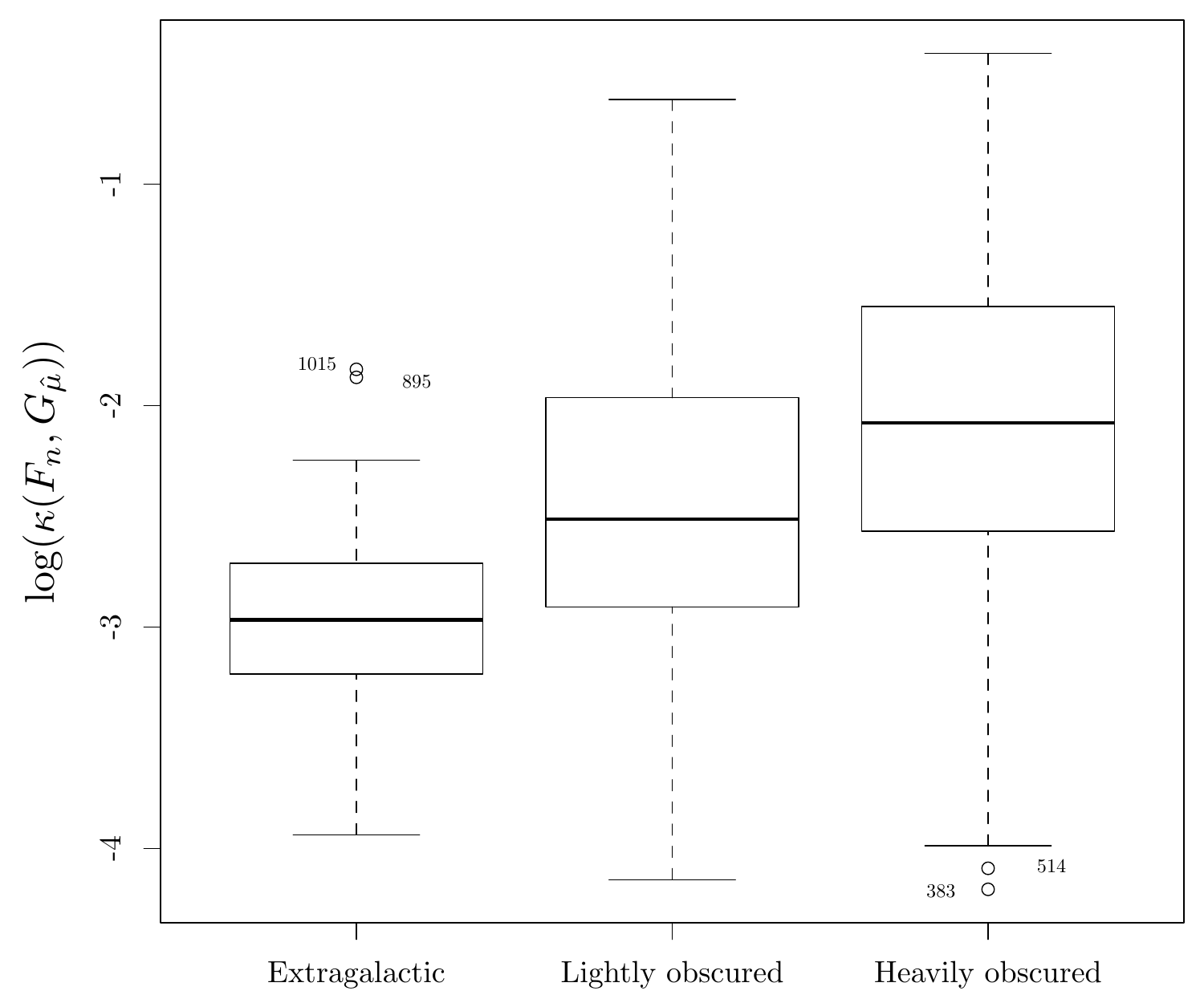} \\
    (a) \\ [2 mm]
    \includegraphics[width=0.55\textwidth]{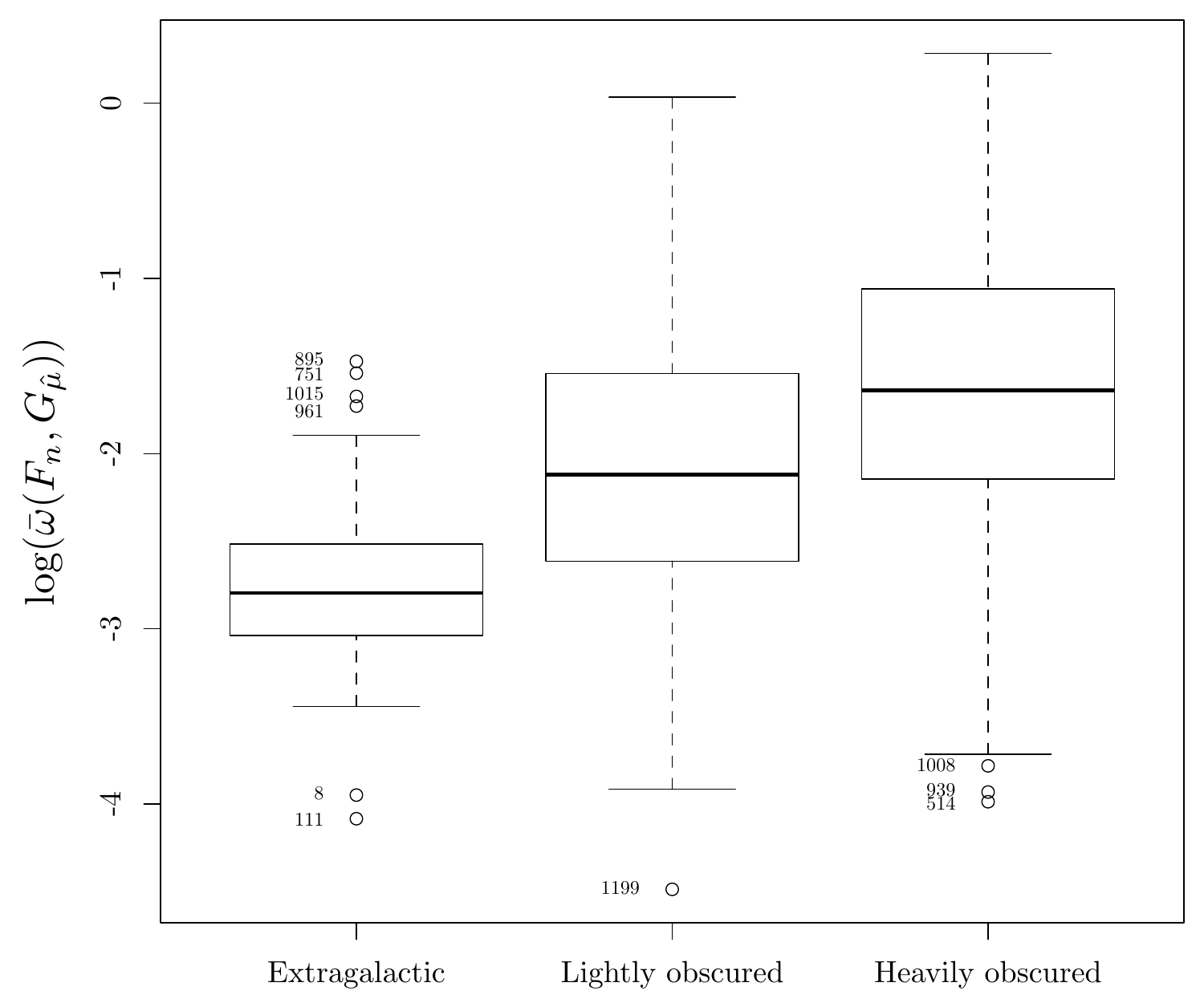} \\
    (b) \\ [2 mm]
    \includegraphics[width=0.55\textwidth]{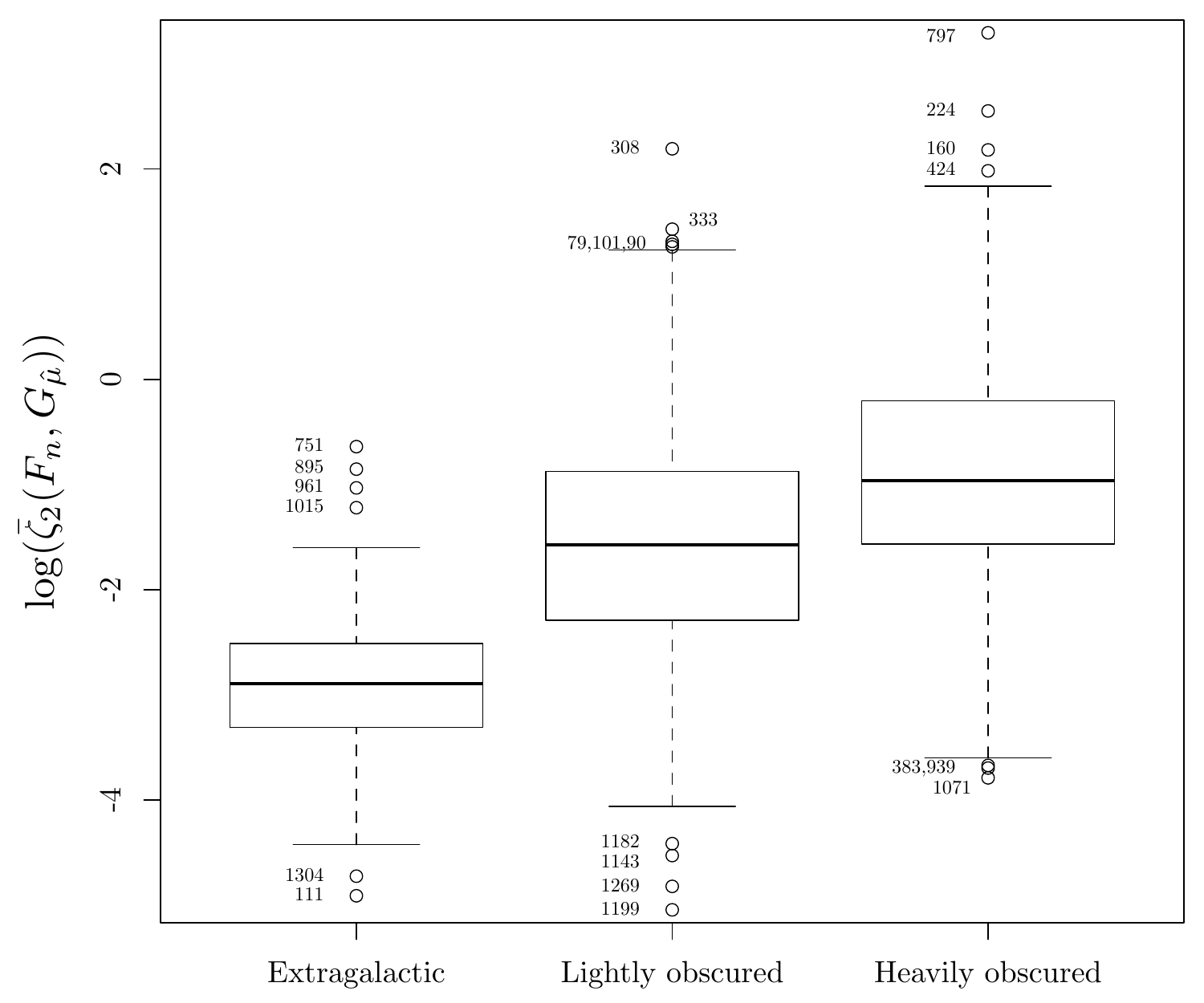} \\
    (c) \\
    \end{tabular}
\end{center}
\caption{Analysis of COUP data. Boxplots of $\log(d(F_n,G_{\hat\mu}))$ for (a) $d=\kappa$, (b) $d=\bar\omega$ and (c) $d=\bar\zeta_2$.}
\label{Figure.BoxplotsCOUP}
\end{figure}

\begin{figure}
\begin{center}
\begin{tabular}[b]{cc}
    \includegraphics[width=0.45\textwidth]{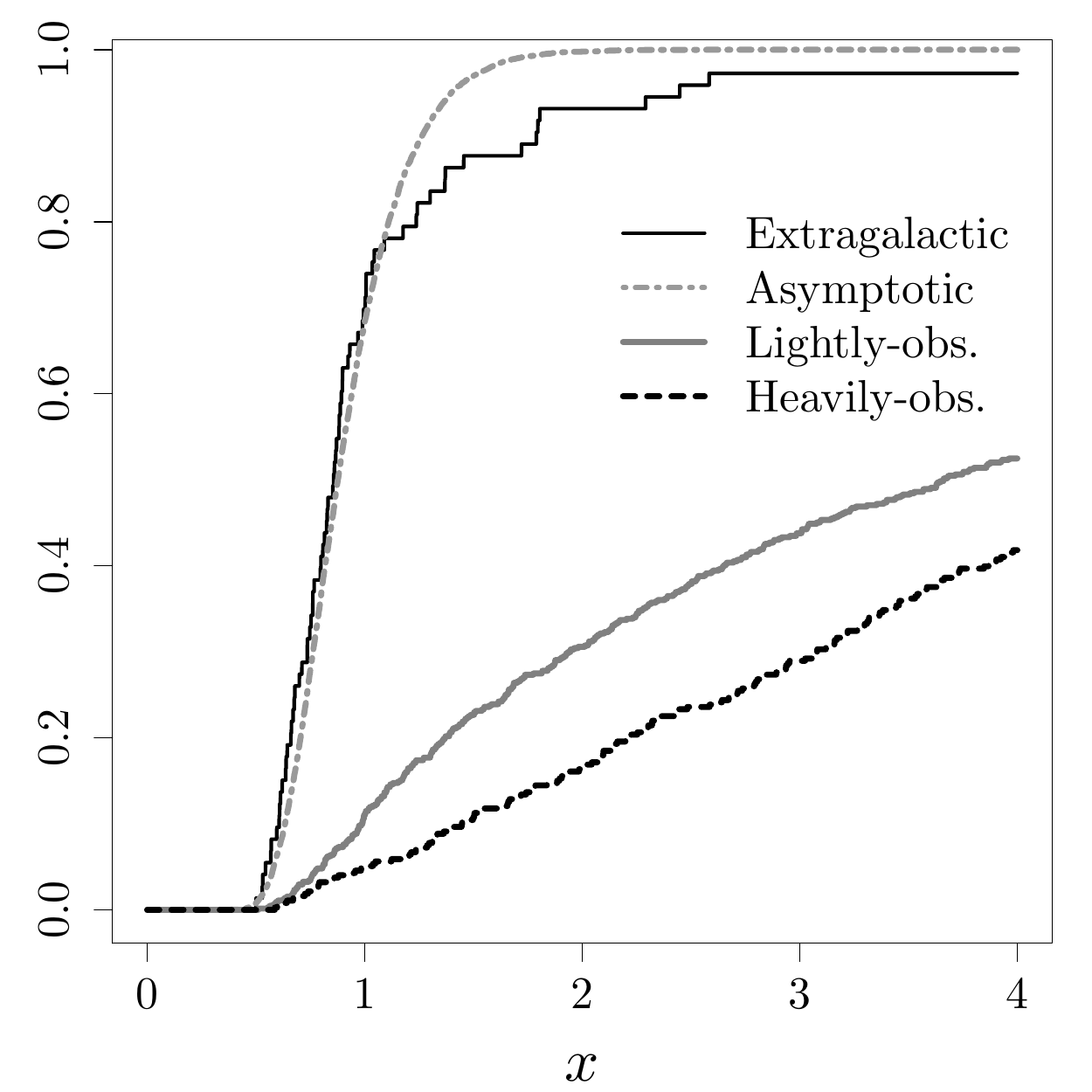} &
    \includegraphics[width=0.45\textwidth]{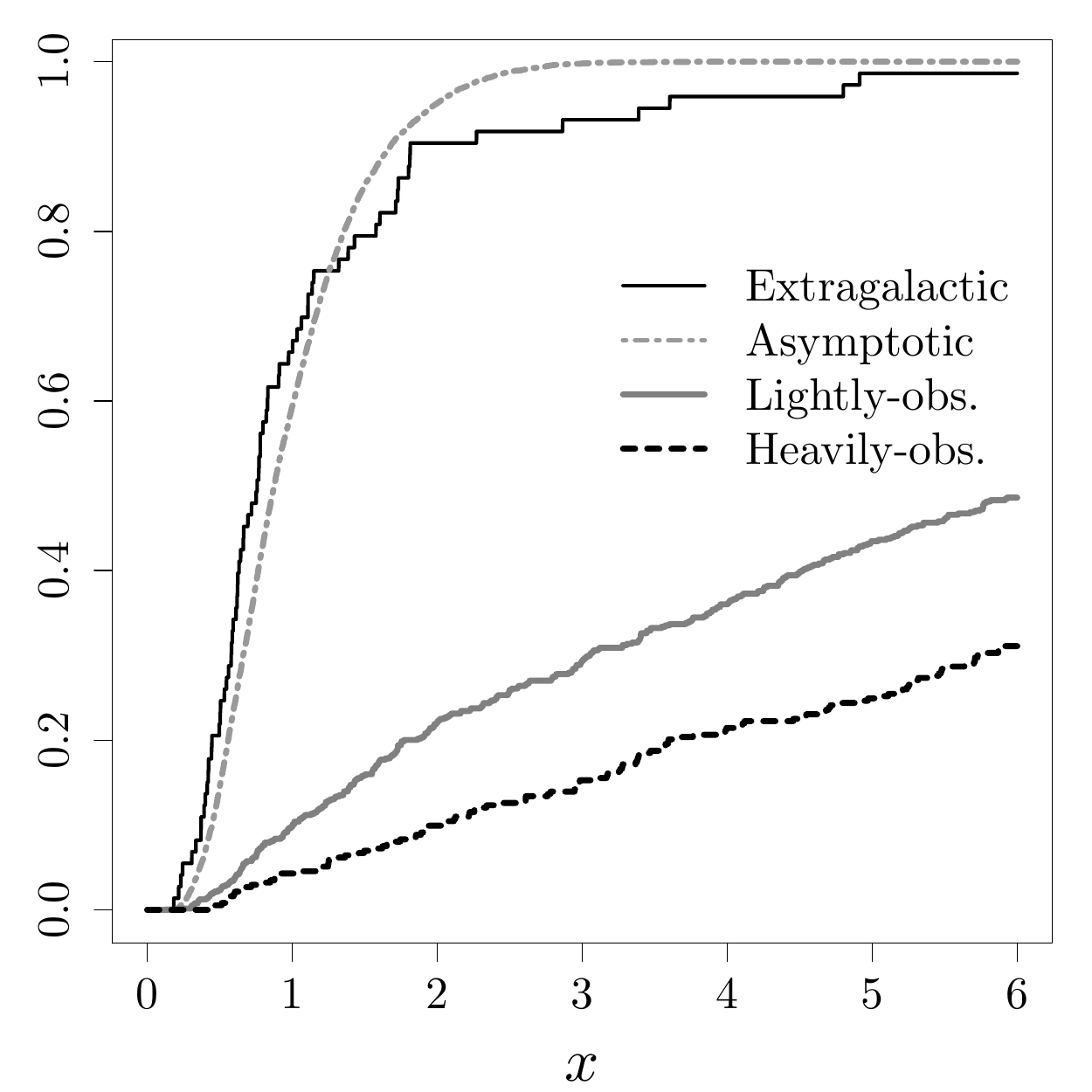} \\
    (a) & (b)
    \end{tabular}
\end{center}
\caption{Analysis of COUP data. Distribution function of $\delta_\infty(d,G_1)$ (as given in Corollary 1) and empirical distribution functions of $\sqrt{n} \, d(F_n,G_{\hat\mu})$ for (a) $d=\bar\omega$ and (b) $d=\bar\zeta_2$.}
\label{Figure.DistribFnsCOUP}
\end{figure}

The normalized metrics $\bar\omega$ and $\bar\zeta_2$ are also capable of detecting outlying COUP sources (see Figure~\ref{Figure.BoxplotsCOUP}), which are interesting to interpret.
The X-ray light-curves and the photon arrival diagrams in the COUP atlas (see \cite[Figure Set 12]{Getman_etal05a}), a numerical and graphical summary of all the COUP sources, show one thing in common among different types of outliers: the outlying sources nearest to the exponential distribution do not exhibit long and powerful X-ray flares, while all outliers farthest from 0 do.
In other words, some outliers in Figure~\ref{Figure.BoxplotsCOUP} could be in fact misclassified X-ray sources. For instance, the COUP sources 751, 895, 961, and 1015, classified as extragalactic sources by \cite{Getman_etal05b} and having the highest normalized Wasserstein and Zolotarev distances to the exponential distribution among those sources of the extragalactic sample (Figure \ref{Figure.BoxplotsCOUP}), are likely to be young stellar objects in the Orion Nebula region.

To analyze the nature of the above-mentioned four outliers with more precision, we decided to incorporate another informative variable with the potential of discriminating among the three COUP classes specified above. Taking into account that, for each of the photons belonging to a COUP source, we observe both the photon arrival time and its associated energy, it is natural to choose some feature summarizing the energies in the COUP series.
The COUP atlas provides extra information (in the form of tabulated quantities for each COUP source). Specifically, we have considered the quantity MedEn, the median energy (in keV) of the source, due to its strong correlation with the absorbing column density characterizing interstellar absorption (see \cite{Feigelson_etal05}). To further clarify why MedEn is an adequate choice, in Figure~\ref{Figure.COUPEnergiesDistribFns} we have plotted the empirical distribution functions of the photon energies rescaled to [0,1] for all the COUP sources. We clearly see that MedEn discriminates well between lightly obscured stars and extragalactic sources whereas distances to the exponential class separate better extragalactic sources from heavily obscured ones. Consequently, together these two quantities  might separate well the three considered COUP classes, though, on their own, each of these variables fails to achieve a low misclassification rate. To check this, we have carried out three classification procedures on these data: quadratic discriminant analysis, the $k$-nearest neighbours ($k$-NN) rule and model-based discriminant analysis, respectively implemented in the R packages MASS (\cite{Venables-Ripley}), class (\cite{Venables-Ripley}) and mclust (\cite{mclust_ref}). We can choose $k$ via cross-validation (CV). Using the package caret (\cite{Kuhn}), we have run 10-, 5-fold and leave-one-out (LOO) CV to get insight into the value of $5\leq k\leq 63$ yielding the largest accuracy. In the case of Zolotarev distance $\bar\zeta_2$ the optimal value of $k$ is 5 for the three CV procedures. For the Wasserstein metric $\bar\omega$, the optimal $k$'s are 35, 33 and 39 for 10-, 5-fold and LOO CV, respectively. For the Kolmogorov metric $\kappa$, the optimal $k$'s are 19, 17 and 21 (or 23 and 25) for 10-, 5-fold and LOO CV, respectively. Since, for each metric, the optimal values of $k$ are similar, we have chosen $k=5$ for $k$-NN with the $\bar\zeta_2$ metric, $k=35$ with $\bar\omega$ and $k=19$ in the case of $\kappa$.
The percentage of correct classifications with the three discriminant methods and the three distances appear in Table~\ref{TableCOUPClassification}: they are all nearly the same and remarkably high (around $90\%$). The normalized Zolotarev distance of the PIT to the exponential distribution achieves the best correct classification rate, no matter which procedure is used. Although the improvement over the usual Kolmogorov metric may not seem significant, this is probably due to the low proportion of extragalactic sources in the sample. To gain a better insight of the advantages of the Zolotarev metric, in  Table \ref{TableCOUPConfusionMatrices} we display the confusion matrices of these classification procedures.
If we look at the extragalactic class (NM), using the Kolmogorov distance provides positive predictive values (PPVs) of 67\%, 65\% and 68\%, for the quadratic, the $k$-NN and the mixture classification rules respectively. With the Zolotarev metric the corresponding PPVs are 78\%, 76\% and 78\%, respectively. This significant increase in the classification accuracy was one of the motivations of this work.

There could be concerns regarding the potential influence of the outlying sources and their masking effect on the procedures employed in this section. For example, classical discriminant methods (like the quadratic or model-based rules) are sensitive towards outliers and may result in inaccurate parameter estimation or ill-posed problems. These methods can be replaced by robustified classifiers that rely, e.g., on depth functions (\cite{Hubert-Rousseeuw-Segaert}) or on robust estimates of the mixture parameters (\cite{Garcia-Escudero}).
However, robust versions of the normalized distances of the PIT distributions to the exponential class are not adequate for this problem. The reason is that outlying interarrival times are precisely the ones leading to detection of flares, inconsistent with the usual behaviour of extragalactic radiation.

%If we look at the extragalactic class (NM), the increment in the rate of correct classifications from Kolmogorov to Zolotarev distance is around 20\%, which was one of the motivations of this work.
%contains, thus, significant information on the source class.

% TABLE 1
\begin{table}
\caption{Analysis of COUP data: Percentage of correct classifications based on the logarithm of the median photon energy joint with the logarithm of the Kolmogorov distance (first row), the normalized Wasserstein distance (second row) and the normalized Zolotarev distance to the exponential distribution (last row).}
\label{TableCOUPClassification}
\begin{center}
\begin{tabular}{lccc} %\hline
Distance & Quadratic & $k$-NN & Model-based \\ \hline
Kolmogorov & 88.99 & 87.89 & 89.08 \\
Wasserstein & 89.54 & 87.98 & 88.99 \\
Zolotarev & 90.18 & 90.09 & 90.64 \\ \hline
\end{tabular}
\end{center}
\end{table}

% TABLE 2
{\begin{table}
\caption{Analysis of COUP data: confusion matrices for classifications based on the logarithm of the median photon energy and the logarithm of the Kolmogorov distance (first row), the normalized Wasserstein distance (second row) and the normalized Zolotarev distance (last row) to the exponential distribution. Numbers in bold font correspond to the highest correct classification in each class. NM = non members, HO = Heavily obscured, LO = Lightly obscured stars.}
\label{TableCOUPConfusionMatrices}
\small
\begin{center}
\begin{tabular}{@{\hspace{0mm}}c@{\hspace{1mm}}ccc}
& \multicolumn{3}{c}{\bf Classification rule} \\ [2 mm]
\bf Metric & \bf Quadratic & \bf $k$-NN & \bf Mixture \\ [1 mm]
\rotatebox[origin=c]{90}{\bf Kolmogorov} &
% Quadratic and Kolmogorov
\begin{tabular}{@{\hspace{0mm}}l@{\hspace{1mm}}|l@{\hspace{1mm}}|c@{\hspace{2mm}}c@{\hspace{2mm}}c@{\hspace{1mm}}|} %\hline
\multicolumn{2}{c}{} & \multicolumn{3}{c}{Actual} \\ \cline{3-5}
\multicolumn{2}{c|}{} & NM & HO & LO \\ \cline{2-5}
\multirow{3}{*}{\rotatebox[origin=c]{90}{Predicted}} & NM & 48 & 23 & 1 \\
 & HO & 25 & 299 & 20 \\
 & LO & 0 & 51 & 623 \\ \cline{2-5}
\end{tabular} &
%  k-NN (k=19) and Kolmogorov
\begin{tabular}{@{\hspace{0mm}}l@{\hspace{1mm}}|l@{\hspace{1mm}}|c@{\hspace{2mm}}c@{\hspace{2mm}}c@{\hspace{1mm}}|} %\hline
\multicolumn{2}{c}{} & \multicolumn{3}{c}{Actual} \\ \cline{3-5}
\multicolumn{2}{c|}{} & NM & HO & LO \\ \cline{2-5}
\multirow{3}{*}{\rotatebox[origin=c]{90}{Predicted}} & NM & 50 & 26 & 1 \\
 & HO & 23 & 292 & 27 \\
 & LO &  0 &  55 & 616 \\ \cline{2-5}
\end{tabular} &
% Mixture and Kolmogorov
\begin{tabular}{@{\hspace{0mm}}l@{\hspace{1mm}}|l@{\hspace{1mm}}|c@{\hspace{2mm}}c@{\hspace{2mm}}c@{\hspace{1mm}}|} %\hline
\multicolumn{2}{c}{} & \multicolumn{3}{c}{Actual} \\ \cline{3-5}
\multicolumn{2}{c|}{} & NM & HO & LO \\ \cline{2-5}
\multirow{3}{*}{\rotatebox[origin=c]{90}{Predicted}} & NM & 48 & 22 & 1 \\
 & HO & 25 & 308 & 28 \\
 & LO &  0 &  43 & 615 \\ \cline{2-5}
\end{tabular}\\ [10 mm]
\rotatebox[origin=c]{90}{\bf Wasserstein} &
% Quadratic and Wasserstein
\begin{tabular}{@{\hspace{0mm}}l@{\hspace{1mm}}|l@{\hspace{1mm}}|c@{\hspace{2mm}}c@{\hspace{2mm}}c@{\hspace{1mm}}|} %\hline
\multicolumn{2}{c}{} & \multicolumn{3}{c}{Actual} \\ \cline{3-5}
\multicolumn{2}{c|}{} & NM & HO & LO \\ \cline{2-5}
\multirow{3}{*}{\rotatebox[origin=c]{90}{Predicted}} & NM & 55 & 21 & 2 \\
 & HO & 18 & 297 & 18 \\
 & LO & 0 & 55 & {\bf 624} \\ \cline{2-5}
\end{tabular} &
%  k-NN (k=35) and Wasserstein
\begin{tabular}{@{\hspace{0mm}}l@{\hspace{1mm}}|l@{\hspace{1mm}}|c@{\hspace{2mm}}c@{\hspace{2mm}}c@{\hspace{1mm}}|} %\hline
\multicolumn{2}{c}{} & \multicolumn{3}{c}{Actual} \\ \cline{3-5}
\multicolumn{2}{c|}{} & NM & HO & LO \\ \cline{2-5}
\multirow{3}{*}{\rotatebox[origin=c]{90}{Predicted}} & NM & 54 & 26 & 2 \\
 & HO & 18 & 283 & 20 \\
 & LO &  1 & 64 & {\bf 622} \\ \cline{2-5}
\end{tabular} &
% Mixture and Wasserstein
\begin{tabular}{@{\hspace{0mm}}l@{\hspace{1mm}}|l@{\hspace{1mm}}|c@{\hspace{2mm}}c@{\hspace{2mm}}c@{\hspace{1mm}}|} %\hline
\multicolumn{2}{c}{} & \multicolumn{3}{c}{Actual} \\ \cline{3-5}
\multicolumn{2}{c|}{} & NM & HO & LO \\ \cline{2-5}
\multirow{3}{*}{\rotatebox[origin=c]{90}{Predicted}} & NM & 54 & 22 & 2 \\
 & HO & 19 & 295 & 21 \\
 & LO &  0 &  56 & {\bf 621} \\ \cline{2-5}
\end{tabular} \\ [10 mm]
\rotatebox[origin=c]{90}{\bf Zolotarev} &
% Quadratic and Zolotarev
\begin{tabular}{@{\hspace{0mm}}l@{\hspace{1mm}}|l@{\hspace{1mm}}|c@{\hspace{2mm}}c@{\hspace{2mm}}c@{\hspace{1mm}}|} %\hline
\multicolumn{2}{c}{} & \multicolumn{3}{c}{Actual} \\ \cline{3-5}
\multicolumn{2}{c|}{} & NM & HO & LO \\ \cline{2-5}
\multirow{3}{*}{\rotatebox[origin=c]{90}{Predicted}} & NM & {\bf 59} & 15 & 2 \\
 & HO & 14 & {\bf 302} & 20 \\
 & LO & 0 & 56 & 622 \\ \cline{2-5}
\end{tabular} &
% k-NN (k=5) and Zolotarev
\begin{tabular}{@{\hspace{0mm}}l@{\hspace{1mm}}|l@{\hspace{1mm}}|c@{\hspace{2mm}}c@{\hspace{2mm}}c@{\hspace{1mm}}|} %\hline
\multicolumn{2}{c}{} & \multicolumn{3}{c}{Actual} \\ \cline{3-5}
\multicolumn{2}{c|}{} & NM & HO & LO \\ \cline{2-5}
\multirow{3}{*}{\rotatebox[origin=c]{90}{Predicted}} & NM & {\bf 57} & 15 & 3 \\
 & HO & 16 & {\bf 317} & 33 \\
 & LO & 0 & 41 & 608 \\ \cline{2-5}
\end{tabular} &
% Mixture and Zolotarev
\begin{tabular}{@{\hspace{0mm}}l@{\hspace{1mm}}|l@{\hspace{1mm}}|c@{\hspace{2mm}}c@{\hspace{2mm}}c@{\hspace{1mm}}|} %\hline
\multicolumn{2}{c}{} & \multicolumn{3}{c}{Actual} \\ \cline{3-5}
\multicolumn{2}{c|}{} & NM & HO & LO \\ \cline{2-5}
\multirow{3}{*}{\rotatebox[origin=c]{90}{Predicted}} & NM & {\bf 60} & 15 & 2 \\
 & HO & 13 & {\bf 307} & 21 \\
 & LO & 0 & 51 & {\bf 621} \\ \cline{2-5}
\end{tabular}
\end{tabular}
\end{center}
\end{table}
}

Of the three distances, $\kappa$, $\bar\omega$ and $\bar\zeta_2$, for the next diagram we have kept only the latter, as it has the highest discriminant ability. Figure~\ref{Figure.COUPLogMedEnLogZol} displays the scatterplot of the logarithm of MedEn in terms of the logarithm of the normalized Zolotarev distance to the exponential distribution.
At a glance, we see the three COUP classes separated in clusters (extragalactic: high MedEn/low distance; heavily obscured: high energy/high distance; lightly obscured: low energy/medium distance).

In Figure~\ref{Figure.COUPLogMedEnLogZol}, a simple visual inspection reveals that COUP sources 751, 895 and 961, classified as extragalactic, have a higher probability of being heavily obscured stars. Indeed, Table~\ref{TableCOUPPosteriorProbs} displays the posterior probabilities of membership, derived from the quadratic classification rule, for the misclassified extragalactic cases when these probabilities exceed 0.9. We see that all the sources in Table~\ref{TableCOUPPosteriorProbs} are actually the largest outliers ``detected'' by the normalized Zolotarev distance $\bar\zeta_2$ in the extragalactic class (Figure~\ref{Figure.BoxplotsCOUP}(c)). This emphasizes the information conveyed by $\bar\zeta_2$ on the source class.

\begin{figure}
\begin{center}
\includegraphics[width=\textwidth]{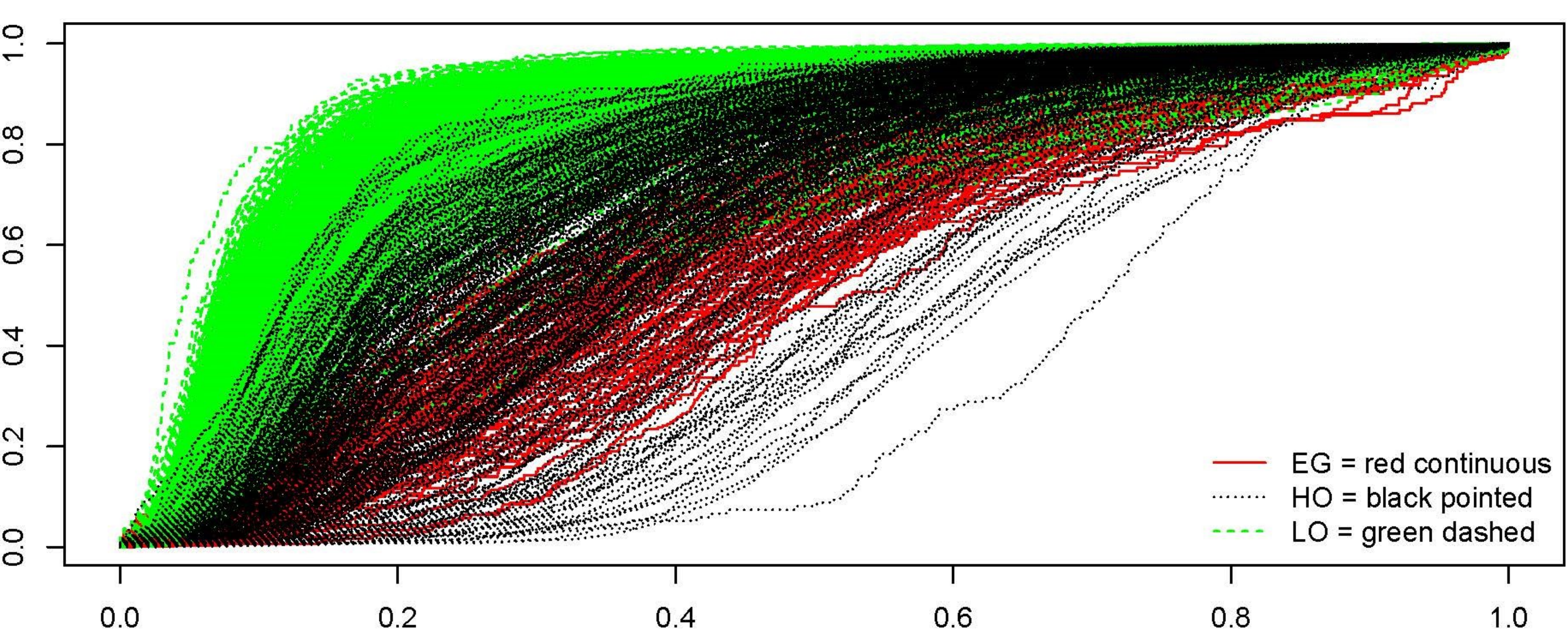}
\end{center}
\caption{Analysis of COUP data: Empirical distribution functions of the photon energies rescaled to the interval [0,1].}
\label{Figure.COUPEnergiesDistribFns}
\end{figure}

\begin{figure}
\begin{center}
\includegraphics[width=\textwidth]{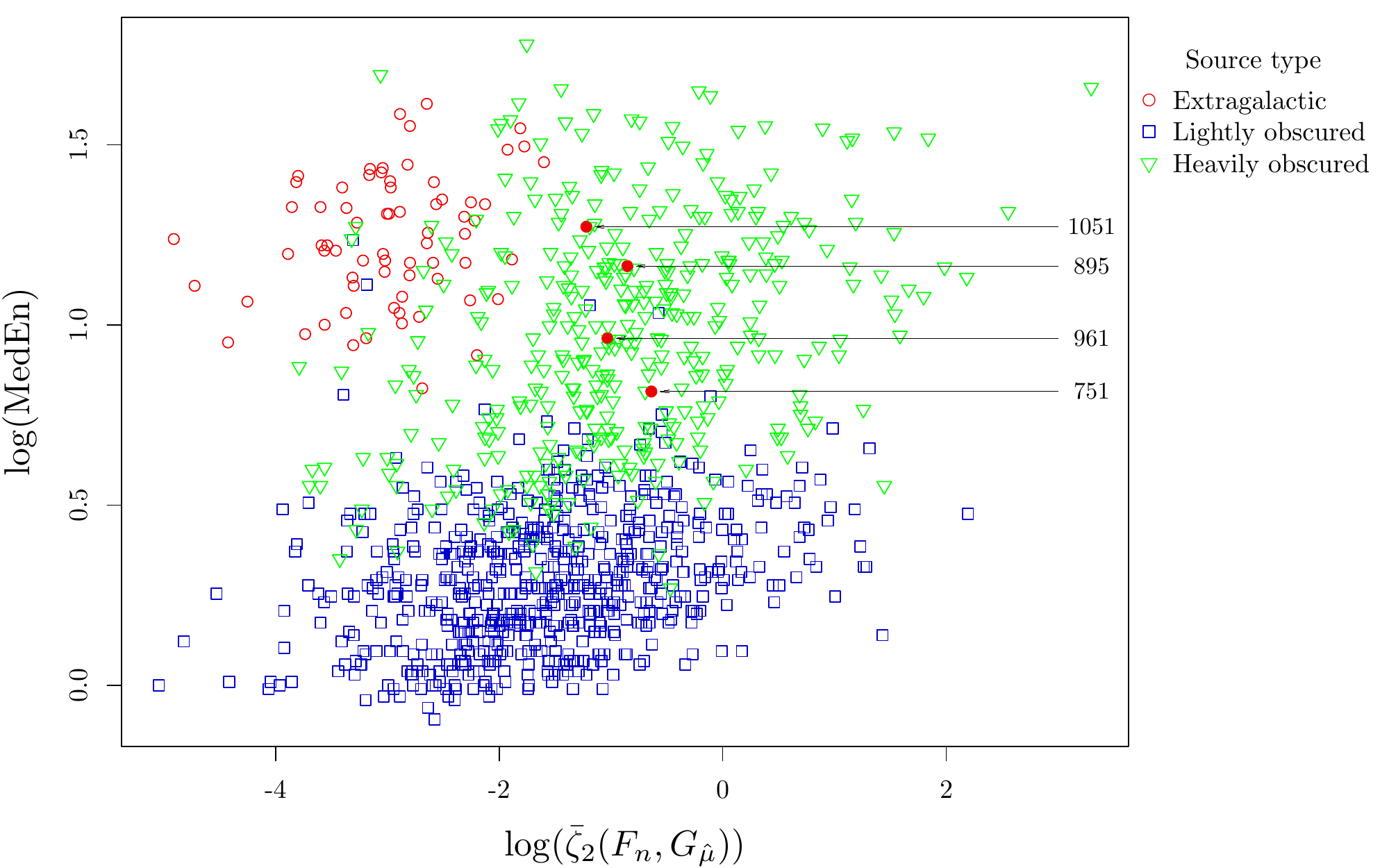}
\end{center}
\caption{Analysis of COUP data: Scatterplot of the logarithm of the median energy and the logarithm of the normalized Zolotarev distance to the exponential distribution. The four outlying sources of the extragalactic class, identified by the Zolotarev distance, are highlighted with filled circles.}
\label{Figure.COUPLogMedEnLogZol}
\end{figure}

\begin{table}
\caption{Analysis of COUP data: Posterior probabilities of membership for extragalactic outliers.}
\label{TableCOUPPosteriorProbs}
\begin{center}
\begin{tabular}{rccc}
 & \multicolumn{3}{c}{Posterior probabilities (in percentage)} \\ \cline{2-4}
\multicolumn{1}{c}{Source} & Extragalactic & Heavily obs.  & Lightly obs. \\ \hline
751 & 0.0152 & 92.8997 & 7.08514 \\ %\hline
895 & 1.5713 & 98.4230 & 0.00576 \\ %\hline
961 & 0.6601 & 98.9164 & 0.42343 \\ %\hline
1015 & 8.4486 & 91.5511 & 0.00003 \\ %\hline
\end{tabular}
\end{center}
\end{table}

% SECTION 6: CONCLUSIONS

\section{Conclusions}\label{SectionConclusions}

We have introduced normalized versions of two integral probability metrics, the Wasserstein and Zolotarev distances, to quantify the discrepancy between photon interarrival times of X-ray cosmic sources and the exponential distribution. The aim is to measure how different the photon emitting source is from extragalactic radiation. The plug-in estimators of these metrics show a good asymptotic behaviour. The analysis of more than one thousand X-ray sources from the Chandra Orion Ultradeep Project with the proposed metrics reveals that the information conveyed by photon interarrival times on the nature of each X-ray source is very well summarized by the normalized Zolotarev distance. We have further shown that this metric, together only with the median energy of the X-ray source, yields a high percentage of correct classifications of the sources into the classes previously provided by astrophysicists. We remark that here we have only used two discriminating features while the usual expert procedures in astronomy rely on many more. As a striking conclusion, we have detected four sources, originally classified as extragalactic, which, as a matter of fact, are very likely young stars in Orion Molecular Cloud 1.

%%%%%%%%%%%%%%%%%%%%%%%%%%%%%%%%%%%%%%%%%%%%%%%%%%%%%%%%%%%%%%%%%%%%%%%%%%%%%%%%%%%
% SECTION: APPENDIX
%%%%%%%%%%%%%%%%%%%%%%%%%%%%%%%%%%%%%%%%%%%%%%%%%%%%%%%%%%%%%%%%%%%%%%%%%%%%%%%%%%%

\section*{Appendix}

In this technical appendix, we collect the proofs of the results in Section \ref{Section Zolotarev.Subsection Asymptotics}. The main ingredients are the following: first, we show that the sequences of stochastic processes defined in (\ref{processes X(d)}) are equivalent in $L^1$ to continuous functionals of the empirical process; then we apply the central limit theorem (CLT) in suitable Banach spaces to find their weak limits; finally, we use the continuity of the functional given in (\ref{rhon}) to derive the asymptotic distribution of $\delta_n(d,F)$ in (\ref{normalized and estimated distance}).

To begin, we recall that if the stochastic processes  $\mathbb{P}_n$ and $\mathbb{P}$ take values in $L^1$ a.s., it is said that $\mathbb{P}_n$ \emph{converges in distribution to $\mathbb{P}$ in $L^1$} if $\lim_{n\to\infty} \E f(\mathbb{P}_n)=\E f(\mathbb{P})$, for all continuous and bounded functions $f:L^1\longrightarrow \R$.
Note that  if $\mathbb{P}$ and $\mathbb{P}_n$ are jointly measurable and have almost all their trajectories in $L^1$, they can be identified with Borel-measurable random elements in $L^1$. Therefore, the previous expectations are well-defined. In the following, we denote this weak convergence of probability measures in $L^1$ by $\mathbb{P}_n \cw \mathbb{P}$. An analogous definition can be given for the weak convergence in the weighted $L^1$ space defined by
\begin{equation*}%\label{W1}
W^1:=\left\{ f\in L^1: \Vert f\Vert_{W^1}:=\int_0^\infty (1+t)|f(t)|\,  \dif t<\infty\right\}.
\end{equation*}
%Hence, from now on ``$\overset{B}{\longrightarrow}_{\text{w}}$" stands for the weak convergence of a sequence of random processes in the space $B$.

Let  $\mathbb{P}_n$ and $\widetilde{\mathbb{P}}_n$ be two stochastic processes with trajectories in $L^1$ a.s. We say that $\mathbb{P}_n$ and $\widetilde{\mathbb{P}}_n$ are \textit{equivalent in $L^1$}, denoted by $\mathbb{P}_n\aequiv \widetilde{\mathbb{P}}_n$, if $\Vert \mathbb{P}_n-\widetilde{\mathbb{P}}_n\Vert_1\cp 0$, where ``$\cp$" stands for convergence in probability. Roughly speaking, if $\mathbb{P}_n\aequiv \widetilde{\mathbb{P}}_n$, the two processes have the same asymptotic behavior in $L^1$ because if $\mathbb{P}_n\cw \mathbb{P}$ and $\mathbb{P}_n\aequiv \widetilde{\mathbb{P}}_n$, then $\widetilde{\mathbb{P}}_n\cw \mathbb{P}$ (see for instance \cite[Theorem 18.10]{vanderVaart}).
%  Observe also that ``$\aequiv$" is in fact a congruence relation (with respect to the sum) as it is an equivalence relation and satisfies that if $\mathbb{P}_n\aequiv \widetilde{\mathbb{P}}_n$ and $\mathbb{Q}_n\aequiv \widetilde{\mathbb{Q}}_n$, then $\mathbb{P}_n+\mathbb{Q}_n\aequiv \widetilde{\mathbb{P}}_n+\widetilde{\mathbb{Q}}_n$.

In the sequel, $\Emp_n$ stands for the \emph{empirical process} associated to $X$, that is,
$\Emp_n(t):=\sqrt{n}(F_n(t)-F(t))$, $t\ge 0$, $n\ge 1$.
The asymptotic behavior of $\Emp_n$ in $L^1$ and $W^1$ is collected in the following lemma. Part (a) is a known result (see  \cite[Theorem 2.1]{delBarrio}), while part (b) can be found in \cite[Lemma 2]{Baillo-Carcamo-Nieto}.

%%%%%%% START OF LEMMA 2 %%%%%%%%%%%%%%
\begin{lemma}\label{Lemma Convergence of E}
We have that
\begin{enumerate}
\item[(a)] $\Emp_n\cw \B_F$ if and only if $X\in \mathcal{L}^{2,1}$.

\item[(b)] $\Emp_n\overset{W^1}{\longrightarrow}_{\text{w}} \B_F$ if and only if $X\in \mathcal{L}^{4,2}$.

\end{enumerate}
\end{lemma}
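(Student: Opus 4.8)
The plan is to treat both statements as a single central limit theorem (CLT) for sums of i.i.d.\ random elements in an $L^1$-space, noting that $W^1$ is nothing but $L^1(\mu)$ with $\dif\mu(t)=(1+t)\,\dif t$, while the unweighted $L^1$ corresponds to $\dif\mu(t)=\dif t$. Writing $\xi_i(t):=I_{\{X_i\le t\}}-F(t)$, we have $\Emp_n=n^{-1/2}\sum_{i=1}^n\xi_i$, a normalized sum of centered i.i.d.\ random elements. The first routine check is that $\xi_1$ almost surely lands in the relevant space: splitting $\int_0^\infty(1+t)^{\alpha}|\xi_1(t)|\,\dif t$ at $t=X_1$ (where $|\xi_1|$ equals $F$ below $X_1$ and $\Prob(X>t)$ above it) shows membership in $L^1$ once $\E X<\infty$ and in $W^1$ once $\E X^2<\infty$, both implied by the respective Lorentz conditions.

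Next I would identify the Gaussian limit through its covariance. A direct computation gives
\begin{equation*}
\E[\xi_1(s)\xi_1(t)]=F(s\wedge t)-F(s)F(t)=\gamma(F(s),F(t)),
\end{equation*}
which is exactly the covariance of $\B\circ F=\B_F$, so $\B_F$ is the only possible limit. The necessity half of each equivalence is then transparent: if the CLT holds, $\B_F$ must be a genuine Borel random element of $L^1(\mu)$, and since $\B(u)$ is centered Gaussian with variance $u(1-u)$,
\begin{equation*}
\E\Vert \B_F\Vert_{L^1(\mu)}=\sqrt{2/\pi}\int_0^\infty\sqrt{F(t)(1-F(t))}\,\dif\mu(t),
\end{equation*}
so finiteness of this integral is forced. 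Comparing $\sqrt{F(t)(1-F(t))}$ with $\sqrt{\Prob(X>t)}$ (they are comparable past the median, while on a bounded interval near $0$ the integrand is bounded and contributes a finite amount) shows that this integral is finite for $\dif\mu=\dif t$ exactly when $\Lambda_{2,1}(X)<\infty$, and for $\dif\mu=(1+t)\,\dif t$ exactly when $\Lambda_{4,2}(X)<\infty$ (using that $\Lambda_{4,2}<\infty$ already entails $\Lambda_{2,1}<\infty$, since $\sqrt{\Prob(X>t)}\le t\sqrt{\Prob(X>t)}$ for $t\ge 1$). This simultaneously produces the integrability condition governing sufficiency, because $(\E\xi_1(t)^2)^{1/2}=\sqrt{F(t)(1-F(t))}$.

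The sufficiency direction is where the real work lies, and I expect it to be the main obstacle. Because $L^1$ is not of type $2$, the CLT cannot be deduced from a second-moment bound alone; one must verify that $\xi_1$ is pregaussian in $L^1(\mu)$ together with the asymptotic-negligibility (uniform integrability of $\Vert\Emp_n\Vert$) needed to upgrade convergence of the finite-dimensional distributions to weak convergence in the Banach space. The pregaussian property is equivalent to the very integral $\int(\E\xi_1(t)^2)^{1/2}\,\dif\mu(t)<\infty$ computed above, and the uniform bound $|\xi_1(t)|\le 1$ keeps the negligibility condition under control, again reducing it to the same integral; this is the substance of the Banach-space CLT machinery (randomization and Gaussian comparison). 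For $\dif\mu=\dif t$ this is precisely \cite[Theorem 2.1]{delBarrio}, which I would invoke for part (a); for the weighted measure $\dif\mu=(1+t)\,\dif t$ the identical argument applies, giving part (b) as in \cite[Lemma 2]{Baillo-Carcamo-Nieto}.
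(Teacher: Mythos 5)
The paper offers no proof of this lemma beyond citing \cite[Theorem 2.1]{delBarrio} for part (a) and \cite[Lemma 2]{Baillo-Carcamo-Nieto} for part (b), and your sketch correctly reconstructs the argument behind those citations: the CLT for the centered i.i.d.\ summands $I_{\{X_i\le t\}}-F(t)$ in the cotype-2 space $L^1(\mu)$ reduces to the pregaussian condition $\int_0^\infty\sqrt{F(t)(1-F(t))}\,\dif\mu(t)<\infty$ (the same Araujo--Gin\'e criterion the paper itself invokes in the proofs of Theorems 1 and 2), which you correctly identify with $\Lambda_{2,1}(X)<\infty$ for $\dif\mu(t)=\dif t$ and with $\Lambda_{4,2}(X)<\infty$ for $\dif\mu(t)=(1+t)\,\dif t$. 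Since you ultimately delegate the hard sufficiency direction to the same two references, your route is essentially the one the paper relies on, and I see no gap.
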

%%%%%%% END OF LEMMA 2 %%%%%%%%%%%%%%

Our first task is to find processes expressed as continuous functionals of $\Emp_n$ and equivalent to $\mathbb{X}_{d,n}$ in $L^1$. To start, for $t\ge 0$, we decompose $\mathbb{X}_{d,n}$ given in (\ref{processes X(d)}) as:
\begin{equation}\label{decomposition}
\mathbb{X}_{d,n}=\mathbb{A}_{d,n}+\mathbb{B}_{d,n}+\mathbb{C}_{d,n},
\end{equation}
where
\begin{align*}
&\mathbb{A}_{\omega,n}:=\Emp_n,  &  &\mathbb{B}_{\omega,n}:= \sqrt{n}(G_\mu-G_{\hat\mu}),  &  &\mathbb{C}_{\omega,n}:= 0, \\
&\mathbb{A}_{\bar\omega,n}:=\Emp_n/\hat\mu,  &  &\mathbb{B}_{\bar\omega,n}:= \sqrt{n}(G_\mu-G_{\hat\mu})/\hat\mu,  &  &\mathbb{C}_{\bar\omega,n}:= g_{\bar\omega} \sqrt{n}(\mu-\hat\mu)/\hat\mu, \\
&\mathbb{A}_{\zeta_2,n}(t):=\int_t^\infty \Emp_n,  &  &\mathbb{B}_{\zeta_2,n}(t):= \sqrt{n}\int_t^\infty(G_\mu-G_{\hat\mu}),  &  &\mathbb{C}_{\zeta_2,n}:= 0, \\
&\mathbb{A}_{\bar\zeta_2,n}(t):=\frac{1}{\hat\mu^2}\int_t^\infty \Emp_n,  &  &\mathbb{B}_{\bar\zeta_2,n}(t):= \frac{\sqrt{n}}{\hat\mu^2}\int_t^\infty(G_\mu-G_{\hat\mu}),  &  &\mathbb{C}_{\bar\zeta_2,n}:= g_{\bar\zeta_2}\frac{\sqrt{n}(\mu^2-\hat\mu^2)}{\hat\mu^2},
\end{align*}
with $g_{\bar\omega}$ and $g_{\bar\zeta_2}$ defined in (\ref{funtions omega}) and (\ref{funtions zeta}), respectively.

Lemma~\ref{lemma equivalence} provides equivalent expressions for the processes defined above.

%%%%%%% START OF LEMMA 3 %%%%%%%%%%%%%%
\begin{lemma}\label{lemma equivalence}
Let $X$ be a positive random variable with mean $\mu>0$. For $t\ge 0$, the following assertions hold:
\begin{enumerate}
\item[(a)] If $X\in\mathcal{L}^{4/3}$,  $\mathbb{B}_{\omega,n}\aequiv \widetilde{\mathbb{B}}_{\omega,n}$, where $\widetilde{\mathbb{B}}_{\omega,n}(t):=\sqrt{n}(\hat\mu-\mu) t e^{-t/\mu}/\mu^2$.
\item[(b)] If $X\in\mathcal{L}^{2,1}$, $\mathbb{A}_{\bar\omega,n} \aequiv \widetilde{\mathbb{A}}_{\bar\omega,n} := \Emp_n/\mu$.
\item[(c)] If $X\in\mathcal{L}^{4/3}$, $\mathbb{B}_{\bar\omega,n}\aequiv \widetilde{\mathbb{B}}_{\bar\omega,n}$, where $\widetilde{\mathbb{B}}_{\bar\omega,n}(t):=\sqrt{n}(\hat\mu-\mu) t e^{-t/\mu}/\mu^3$.
\item[(d)]  If $X\in\mathcal{L}^{4/3}$, $\mathbb{C}_{\bar\omega,n} \aequiv \widetilde{\mathbb{C}}_{\bar\omega,n} := \sqrt{n}(\mu-\hat\mu) g_{\bar\omega}/\mu $.
\item[(e)]  If $X\in\mathcal{L}^{4/3}$,  $\mathbb{B}_{\zeta_2,n}\aequiv \widetilde{\mathbb{B}}_{\zeta_2,n}$, where $\widetilde{\mathbb{B}}_{\zeta_2,n}(t):=\sqrt{n}(\hat\mu-\mu)  (1+t/\mu)e^{-t/\mu} $.
\item[(f)]   If $X\in\mathcal{L}^{4,2}$, $\mathbb{A}_{\bar\zeta_2,n}\aequiv \widetilde{\mathbb{A}}_{\bar\zeta_2,n}$, where $\widetilde{\mathbb{A}}_{\bar\zeta_2,n}(t):=\int_t^\infty \Emp_n/\mu^2$.
\item[(g)] If $X\in\mathcal{L}^{4/3}$,  $\mathbb{B}_{\bar\zeta_2,n}\aequiv \widetilde{\mathbb{B}}_{\bar\zeta_2,n}$, where $\widetilde{\mathbb{B}}_{\bar\zeta_2,n}(t):=\sqrt{n}(\hat\mu-\mu)   (1+t/\mu)e^{-t/\mu}/\mu^2 $.
\item[(h)] If $X\in\mathcal{L}^{2}$, $\mathbb{C}_{\bar\zeta_2,n} \aequiv \widetilde{\mathbb{C}}_{\bar\zeta_2,n} := \sqrt{n}(\mu-\hat\mu) 2\, g_{\bar\zeta_2} /\mu $.
\end{enumerate}
\end{lemma}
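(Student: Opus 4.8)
The plan is to notice that, by the definition of $\aequiv$, every one of the eight claims amounts to proving a statement of the form $\Vert \mathbb{P}_n-\widetilde{\mathbb{P}}_n\Vert_1\cp 0$, and to reduce all of them to a single probabilistic fact about the sample mean together with elementary bounds on the relevant $L^1$-norms. The organising remark is that, whenever $X\in\mathcal{L}^{4/3}$,
\[
\sqrt{n}\,(\hat\mu-\mu)^2\cp 0 .
\]
This is the sharp statement behind the $\mathcal{L}^{4/3}$ hypotheses: it does \emph{not} follow from the classical CLT (which would require $\E X^2<\infty$), but from the Marcinkiewicz--Zygmund strong law, which gives $\hat\mu-\mu=o(n^{-1/4})$ a.s. when $\E X^{4/3}<\infty$, hence $\sqrt{n}(\hat\mu-\mu)^2=o(1)$ a.s. Under the stronger assumption $X\in\mathcal{L}^2$ of part (h), the same conclusion is immediate from $\sqrt{n}(\hat\mu-\mu)=O_\Prob(1)$. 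Since every part assumes at least $\mathcal{L}^{4/3}\subset\mathcal{L}^1$, the strong law also gives $\hat\mu\to\mu$ a.s., which I will use repeatedly.

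The eight equivalences then split into two mechanisms. For the ``$\mathbb{B}$'' terms (parts (a) and (e)) I would compute the exponential differences explicitly, $G_\mu-G_{\hat\mu}=e^{-t/\hat\mu}-e^{-t/\mu}$ and $\int_t^\infty(G_\mu-G_{\hat\mu})=\hat\mu\,e^{-t/\hat\mu}-\mu\,e^{-t/\mu}$, and Taylor-expand the maps $\phi(m;t)=e^{-t/m}$ and $\phi(m;t)=m\,e^{-t/m}$ to second order in $m$ around $\mu$. The first-order terms reproduce exactly $\widetilde{\mathbb{B}}_{\omega,n}$ and $\widetilde{\mathbb{B}}_{\zeta_2,n}$, so the error is a remainder $\tfrac12\sqrt{n}(\hat\mu-\mu)^2\,\partial_m^2\phi(\xi_t;t)$ with $\xi_t$ between $\mu$ and $\hat\mu$. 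Taking $L^1$-norms bounds it by $\sqrt{n}(\hat\mu-\mu)^2\cdot\sup_{m\in J}\int_0^\infty|\partial_m^2\phi(m;t)|\,\dif t$, where $J$ is a fixed compact neighbourhood of $\mu$; since $\hat\mu\to\mu$ a.s., $\xi_t$ lies in $J$ eventually, and the relevant integrals (for instance $\int_0^\infty(t^2/m^3)e^{-t/m}\,\dif t=2$, and $\int_0^\infty|\partial_m^2 e^{-t/m}|\,\dif t\le 4/m$) are finite and uniformly bounded on $J$, so the key fact closes the argument. For the ``$\mathbb{A}$'' terms (parts (b) and (f)) the difference is merely $\Emp_n$, respectively $\int_\cdot^\infty\Emp_n$, times a scalar $1/\hat\mu-1/\mu$ or $1/\hat\mu^2-1/\mu^2$ that is $o_\Prob(1)$ by the law of large numbers; it therefore suffices that $\Vert\Emp_n\Vert_1$ and $\Vert\int_\cdot^\infty\Emp_n\Vert_1$ be $O_\Prob(1)$. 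The former is Lemma~\ref{Lemma Convergence of E}(a) under $\mathcal{L}^{2,1}$, and the latter follows from Lemma~\ref{Lemma Convergence of E}(b) under $\mathcal{L}^{4,2}$ together with the Fubini estimate $\Vert\int_\cdot^\infty f\Vert_1\le\Vert f\Vert_{W^1}$, which shows $f\mapsto\int_\cdot^\infty f$ is a bounded operator $W^1\to L^1$, transferring weak convergence in $W^1$ into an $O_\Prob(1)$ norm in $L^1$.

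The remaining parts are reductions. Parts (c) and (g) follow from (a) and (e) by dividing by $\hat\mu$ (resp. $\hat\mu^2$) and then replacing that factor by $\mu$ (resp. $\mu^2$); each replacement costs $\Vert\widetilde{\mathbb{B}}_{\cdot,n}\Vert_1\cdot|1/\hat\mu^k-1/\mu^k|$, and since $\Vert\widetilde{\mathbb{B}}_{\cdot,n}\Vert_1$ is of order $\sqrt{n}|\hat\mu-\mu|$ while $|1/\hat\mu^k-1/\mu^k|=O(|\hat\mu-\mu|)$, the cost is again of order $\sqrt{n}(\hat\mu-\mu)^2\cp 0$. Parts (d) and (h) are entirely analogous: after factoring $\mu^2-\hat\mu^2=(\mu-\hat\mu)(\mu+\hat\mu)$ in (h), the difference between $\mathbb{C}_{\cdot,n}$ and $\widetilde{\mathbb{C}}_{\cdot,n}$ equals $g_{\cdot}$ times a scalar of order $\sqrt{n}(\hat\mu-\mu)^2$, so its norm is $\Vert g_\cdot\Vert_1\cdot O(\sqrt{n}(\hat\mu-\mu)^2)\cp 0$. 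Here the integrability thresholds become transparent: $\Vert g_{\bar\omega}\Vert_1=\omega(F,G_\mu)/\mu\le 2$ needs only $\E X<\infty$, whereas $\Vert g_{\bar\zeta_2}\Vert_1=\zeta_2(F,G_\mu)/\mu^2\le\tfrac12\E X^2/\mu^2+1$ needs $\E X^2<\infty$, which is exactly the $\mathcal{L}^2$ hypothesis appearing in (h).

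The main obstacle is the sharp control of the remainders under the weakest possible moment assumption. The estimate $\sqrt{n}(\hat\mu-\mu)^2\cp 0$ under $\mathcal{L}^{4/3}$ is the delicate point, since it requires the Marcinkiewicz--Zygmund rate rather than a finite second moment; matching this rate against the uniform bound on $\int_0^\infty|\partial_m^2\phi(m;t)|\,\dif t$ over a neighbourhood of $\mu$ is precisely what lets the $\mathbb{B}$-term equivalences go through with nothing beyond $\mathcal{L}^{4/3}$. Everything else is bookkeeping with the law of large numbers, Slutsky's theorem, and the two norm estimates above.
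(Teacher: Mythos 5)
Your proposal is correct and follows essentially the same route as the paper: the key engine in both is that $\sqrt{n}(\hat\mu-\mu)^2\to 0$ a.s.\ under $X\in\mathcal{L}^{4/3}$ via the Marcinkiewicz--Zygmund strong law, combined with second-order mean-value/Taylor bounds on the exponential terms for the $\mathbb{B}$-parts, Slutsky-type arguments with $\Vert\Emp_n\Vert_1=O_\Prob(1)$ (from Lemma~\ref{Lemma Convergence of E}) for the $\mathbb{A}$-parts, and the observation that the remaining differences are deterministic functions times scalars of order $\sqrt{n}(\hat\mu-\mu)^2$. The only cosmetic difference is that you prove part (e) directly by expanding $m\mapsto m\,e^{-t/m}$, whereas the paper cites it from an earlier reference.
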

%%%%%%% END OF LEMMA 3 %%%%%%%%%%%%%%

%% PROOF OF LEMMA 3 %%
\begin{proof}
To show part (a), we use the mean value theorem twice to obtain
\begin{align}
\Vert \mathbb{B}_{\omega,n}-\widetilde{\mathbb{B}}_{\omega,n}\Vert_1
%& =\sqrt{n}|\hat\mu-\mu| \int_0^\infty t \left| e^{-t/\mu_t}/\mu_t^2-  e^{-t/\mu}/\mu^2          \right|\, \dif t \nonumber  \\
& \le \sqrt{n}(\hat\mu-\mu)^2 \int_0^\infty t |2-t/\xi_t| e^{-t/\xi_t}/\xi_t^3\, \dif t,\label{integral mean value}
\end{align}
where %$\mu_t$ and
$\xi_t$ is a point between $\mu$ and $\hat\mu$. The integral in (\ref{integral mean value}) is bounded by
\begin{equation}\label{bound integral}
\int_0^\infty t |2-t/\xi_t| e^{-t/\xi_t}/\xi_t^3\, \dif t\le 2 (\mu+\hat\mu)\frac{\max\{\mu,\hat\mu\}^2}{\min\{\mu,\hat\mu\}^4}\to 4/\mu\quad \text{a.s.}
\end{equation}
Therefore, from (\ref{integral mean value})-(\ref{bound integral}), and by the Kolmogorov, Marcinkiewicz and Zygmund strong law of large numbers  (see, e.g., \cite[Theorem 3.23]{Kallemberg}), we see that, whenever $X\in\mathcal{L}^{4/3}$, $\Vert \mathbb{B}_{\omega,n}-\widetilde{\mathbb{B}}_{\omega,n}\Vert_1\to 0$ a.s.

To see (b), we note that $\Vert \mathbb{A}_{\bar\omega,n}-\widetilde{\mathbb{A}}_{\bar\omega,n}\Vert_1=\Vert \Emp_n \Vert_1|\mu-\hat\mu|/ (\mu\hat\mu)$. From Lemma \ref{Lemma Convergence of E} (a), we have that $\Vert \Emp_n \Vert_1 \cd  \Vert \B_F \Vert_1 $ and the conclusion follows from the strong law of large numbers and Slutsky's theorem.

Part (c) follows from (a), as it is straightforward to check that $\mathbb{B}_{\bar\omega,n}\aequiv {{\mathbb{B}}}_{\omega,n}/\mu$, whenever $X\in\mathcal{L}^{4/3}$.

Part (d) is direct, whereas part (e) can be found in \cite[Lemma 1]{Baillo-Carcamo-Nieto}. The proof of part (f) is similar to the one for (b) by using Lemma \ref{Lemma Convergence of E} (b).

To show part (g), we observe that, from part (e), we have that $\widetilde{\mathbb{B}}_{\bar\zeta_2,n} \aequiv \mathbb{B}_{\zeta_2,n}/\mu^2$. The conclusion follows by checking that
$\Vert \mathbb{B}_{\bar\zeta_2,n}- \mathbb{B}_{\zeta_2,n}/\mu^2\Vert_1=\sqrt{n}(\mu-\hat\mu)^2 (1/\mu+1/\hat\mu)^2$.

Finally, it can be seen that
$\Vert \mathbb{C}_{\bar\zeta_2,n}- \widetilde{\mathbb{C}}_{\bar\zeta_2,n}\Vert_1=\sqrt{n}(\mu-\hat\mu)^2(1/\mu+1/\hat\mu)\, \bar\zeta_2(X,Y_\mu)$.
As $\bar\zeta_2(X,Y_\mu)<\infty$ if and only if $X\in\mathcal{L}^2$, we conclude that (h) is fulfilled. \hfill $\Box$
\end{proof}
%% END OF PROOF OF LEMMA 3 %%

The next corollary, which is a consequence of Lemma~\ref{lemma equivalence} and (\ref{decomposition}), shows that $\mathbb{X}_{d,n}$ are equivalent in $L^1$ to certain continuous functionals of the empirical process $\Emp_n$.

%%%%%%% START OF COROLLARY 3 %%%%%%%%%%%%%%
\begin{corollary}\label{Corollary equivalence}
Let $X$ be a positive random variable with mean $\mu>0$. %The following assertions hold:
\begin{enumerate}
\item[(i)] If $X\in\mathcal{L}^{4/3}$, then $\mathbb{X}_{\omega,n}\aequiv \phi_\omega(\Emp_n)$, where $\phi_\omega:L^1\to L^1$  is the linear operator %defined by
\begin{equation*}
\phi_\omega(f,t):=f(t)- \frac{t}{\mu^2} e^{-t/\mu} \int_0^\infty f(x)\,\dif x, \quad t\ge 0.
\end{equation*}
Moreover, $\Vert \phi_\omega(f)\Vert_1\le 2\, \Vert f\Vert_1$, and $\phi_\omega$ is therefore continuous.
\item[(ii)] If $X\in\mathcal{L}^{2,1}$, then $\mathbb{X}_{\bar\omega,n} \aequiv \phi_{\bar\omega}(\Emp_n)$, where $\phi_{\bar\omega}:L^1\to L^1$ is the linear operator %defined by
\begin{equation*}
\phi_{\bar\omega}(f,t):=\frac{1}{\mu} \left[f(t)+ \left(g_{\bar\omega}(t)-\frac{t}{\mu^2}e^{-t/\mu}      \right) \int_0^\infty f(x)\,\dif x \right],\quad t\ge 0.
\end{equation*}
Moreover, $\Vert \phi_{\bar\omega}(f)\Vert_1\le \Vert f\Vert_1 (2+\bar{\omega}(X,Y_\mu))/\mu$, and $\phi_{\bar\omega}$ is therefore continuous.
\item[(iii)] If $X\in\mathcal{L}^{4/3}$, then $\mathbb{X}_{\zeta_2,n}\aequiv \phi_{\zeta_2}(\Emp_n)$, where $\phi_{\zeta_2}:W^1\to L^1$ is the linear operator %defined  by
\begin{equation*}
\phi_{\zeta_2}(f,t):=\int_t^\infty f(x)\,\dif x- \left(1+\frac{t}{\mu} \right)e^{-t/\mu} \int_0^\infty f(x)\,\dif x,\quad t\ge 0.
\end{equation*}
Moreover, $\Vert \phi_{\zeta_2}(f)\Vert_1\le (1+2\mu)\,\Vert f\Vert_{W^1}$, and $\phi_{\zeta_2}$ is therefore continuous.
\item[(iv)] If $X\in\mathcal{L}^{4,2}$, then  $\mathbb{X}_{\bar\zeta_2,n} \aequiv \phi_{\bar\zeta_2}(\Emp_n)$, where $\phi_{\bar\zeta_2}:W^1\to L^1$ is the linear operator %defined  by
\begin{equation*}
\phi_{\bar\zeta_2}(f,t) := \frac{1}{\mu^2} \left[ \int_t^\infty f(x)\,\dif x+ \left(2\,\mu\, g_{\bar\zeta_2}(t)- \left(1+\frac{t}{\mu}\right) e^{-t/\mu}\right) \int_0^\infty f(x)\,\dif x \right],
\end{equation*}
for $t\ge 0$. Moreover, $\Vert \phi_{\bar\zeta_2}(f)\Vert_1\le \Vert f\Vert_{W^1}\,  [1+2\mu(1+\bar\zeta_2(X,Y_\mu))]/\mu^2 $, and $\phi_{\bar\zeta_2}$ is therefore continuous.
\end{enumerate}
\end{corollary}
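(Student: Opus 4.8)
The plan is to read this off directly from the decomposition (\ref{decomposition}) together with the component-wise equivalences already supplied by Lemma~\ref{lemma equivalence}, and then to obtain the continuity bounds by a triangle-inequality estimate on each $\|\phi_d(f)\|_1$. The single identity that glues everything together is that, for a nonnegative $X$, one has $\hat\mu=\int_0^\infty(1-F_n)$ and $\mu=\int_0^\infty(1-F)$, so that $\sqrt{n}(\hat\mu-\mu)=-\int_0^\infty\Emp_n(t)\,\dif t$. Every approximating term $\widetilde{\mathbb{B}}_{d,n}$ and $\widetilde{\mathbb{C}}_{d,n}$ in Lemma~\ref{lemma equivalence} carries a factor $\sqrt{n}(\hat\mu-\mu)$, so this identity rewrites each of them as a \emph{deterministic} continuous linear functional of $\Emp_n$, namely $\bigl(\int_0^\infty\Emp_n\bigr)\,h(t)$ for a fixed $h$; the same is true of the leading terms $\widetilde{\mathbb{A}}_{d,n}$, which are $\Emp_n/\mu^k$ or $\mu^{-k}\int_t^\infty\Emp_n$ (Lemma~\ref{lemma equivalence} being precisely what strips the random normalizers $\hat\mu$, $\hat\mu^2$ and replaces them by $\mu$, $\mu^2$).

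For each part I would first note that equivalence in $L^1$ is preserved under finite sums, since $\mathbb{P}_n\aequiv\widetilde{\mathbb{P}}_n$ and $\mathbb{Q}_n\aequiv\widetilde{\mathbb{Q}}_n$ give $\|(\mathbb{P}_n+\mathbb{Q}_n)-(\widetilde{\mathbb{P}}_n+\widetilde{\mathbb{Q}}_n)\|_1\cp 0$. Under the stated moment hypothesis — which via the inclusions $\mathcal{L}^{4,2}\subset\mathcal{L}^4\subset\mathcal{L}^2\subset\mathcal{L}^{4/3}$ and $\mathcal{L}^{2,1}\subset\mathcal{L}^{4/3}$ implies simultaneously all the integrability conditions demanded by the relevant items of Lemma~\ref{lemma equivalence} — I replace $\mathbb{A}_{d,n},\mathbb{B}_{d,n},\mathbb{C}_{d,n}$ by their tilde-counterparts, substitute $\sqrt{n}(\hat\mu-\mu)=-\int_0^\infty\Emp_n$, and collect terms. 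In part (i) this yields $\Emp_n(t)-(t/\mu^2)e^{-t/\mu}\int_0^\infty\Emp_n=\phi_\omega(\Emp_n,t)$, and the $\bar\omega$, $\zeta_2$, $\bar\zeta_2$ cases follow by the same bookkeeping, carrying the extra $1/\mu$, $1/\mu^2$ factors and inserting the $g_{\bar\omega}$, $g_{\bar\zeta_2}$ terms coming from the $\widetilde{\mathbb{C}}$ components.

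The continuity bounds come from the triangle inequality applied to $\|\phi_d(f)\|_1$, using $|\int_0^\infty f|\le\|f\|_1$, together with four elementary facts: $\int_0^\infty (t/\mu^2)e^{-t/\mu}\,\dif t=1$ and $\int_0^\infty(1+t/\mu)e^{-t/\mu}\,\dif t=2\mu$ for the exponential kernels, and the identifications $\int_0^\infty|g_{\bar\omega}(t)|\,\dif t=\bar\omega(X,Y_\mu)$ and $\int_0^\infty|g_{\bar\zeta_2}(t)|\,\dif t=\bar\zeta_2(X,Y_\mu)$, both immediate from (\ref{funtions omega})--(\ref{funtions zeta}) and the representations (\ref{Wasserstein representation})--(\ref{Zolotarev distance}). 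For the Zolotarev cases (iii)--(iv) the domain is $W^1$ rather than $L^1$, and the only additional step is a Fubini argument giving $\int_0^\infty|\int_t^\infty f|\,\dif t\le\int_0^\infty t|f(t)|\,\dif t\le\|f\|_{W^1}$, which is exactly what forces the weighted norm to appear.

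I do not anticipate a genuine obstacle: the analytic substance is carried entirely by Lemma~\ref{lemma equivalence}, and the corollary is in essence an accounting exercise. The two points that require care are, first, checking in each part that the \emph{single} hypothesis stated is strong enough to invoke all the component equivalences at once (their individual requirements range from $\mathcal{L}^{4/3}$ up to $\mathcal{L}^{4,2}$), which is settled by the space inclusions above; and second, keeping the normalizing constants $1/\mu$, $1/\mu^2$ straight while using that the tail-integral operator $f\mapsto\int_t^\infty f$ is bounded from $W^1$ into $L^1$ but \emph{not} from $L^1$ into $L^1$, which is the structural reason the two Zolotarev functionals must be defined on $W^1$.
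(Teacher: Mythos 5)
Your proposal is correct and follows exactly the route the paper intends: the paper states the corollary as a direct consequence of the decomposition (\ref{decomposition}) and Lemma~\ref{lemma equivalence}, and your write-up supplies precisely the missing bookkeeping (the identity $\sqrt{n}(\hat\mu-\mu)=-\int_0^\infty\Emp_n$, additivity of $\aequiv$, the inclusions among the moment classes, and the triangle-inequality norm bounds), all of which check out.
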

%%%%%%% END OF COROLLARY 3 %%%%%%%%%%%%%%

We are now in condition to prove Theorems \ref{Theorem SP Wasserstein} and \ref{Theorem SP Zolotarev}.
\medskip

%% PROOF OF THEOREM 1 %%
\noindent
\textit{Proof of Theorem \ref{Theorem SP Wasserstein}} Assume that (a) holds, i.e., $X\in\mathcal{L}^{2,1}$. For $d=\omega$ or $d=\bar\omega$, from Lemma \ref{Lemma Convergence of E} (a) and  Corollary \ref{Corollary equivalence} (i) and (ii), we have that $\mathbb{X}_{d,n}\aequiv \phi_d(\Emp_n)\cw \phi_d(\B_F)=\mathbb{X}_{d,F}$, by the continuos mapping theorem. We conclude that (b) and (c) hold.

Conversely, let us assume  that (b) is satisfied. By Corollary \ref{Corollary equivalence} (i), if $X\in\mathcal{L}^{4/3}$, we obtain that $\phi_\omega(\Emp_n)\cw \mathbb{X}_{\omega,F}$. Observe now that $\phi_\omega(\Emp_n)$ can be rewritten as the normalized sum
$\phi_\omega(\Emp_n) = n^{-1/2}\sum_{i=1}^n  \mathbb{Y}_{\omega,i}$
where $ \mathbb{Y}_{\omega,1},\dots,\mathbb{Y}_{\omega,n}$ are $n$ independent copies of the zero mean process
\begin{equation}\label{Process Y(omega)}
\mathbb{Y}_\omega(t):=\Prob(X>t)-I_{\{X>t\}}+(X-\mu)te^{-t/\mu}/\mu^2 ,\quad t\ge 0.
\end{equation}
This means that the process $\mathbb{Y}_\omega$ satisfies the CLT in $L^1$ (and implies that $\mathbb{X}_{\omega,F}$ is a centered Gaussian process), which is equivalent (see \cite[p. 205]{Araujo-Gine}) to
\begin{equation}\label{CLT Y(omega)}
\int_0^\infty \sqrt{\E \mathbb{Y}_\omega(t)^2}\, \dif t<\infty.
\end{equation}
In particular, this implies that $X\in \mathcal{L}^2$ and denoting $\mathbb{Z}(t):=\Prob(X>t)-I_{\{X>t\}}$ ($t\ge 0$), from (\ref{Process Y(omega)}), (\ref{CLT Y(omega)}), and by Minkowski inequality, we have that
$$\int_0^\infty \sqrt{\E \mathbb{Z}(t)^2}\, \dif t\le \sigma+ \int_0^\infty \sqrt{\E \mathbb{Y}_\omega(t)^2}\, \dif t<\infty,$$
where $\sigma$  is the standard deviation of $X$. Last inequality amounts to (a) $X\in\mathcal{L}^{2,1}$.

For the proof that (c) implies (a), it is enough to note that $\mathbb{X}_{\bar\omega,n}\cw \mathbb{X}_{\bar\omega,F}$ is equivalent to $\hat\mu\,\mathbb{X}_{\bar\omega,n}\cw \mu\,\mathbb{X}_{\bar\omega,F}$. The rest of the proof runs as with $d=\omega$. \hfill $\Box$
%% END OF PROOF OF THEOREM 1 %%

\medskip

%% PROOF OF THEOREM 2 %%
\noindent
\textit{Proof of Theorem \ref{Theorem SP Zolotarev}}  To show that part (a) implies (b) and (c) it is enough to follow the same steps as in the proof of the same implications in Theorem \ref{Theorem SP Wasserstein}. We omit the details.

To finish, we will show that part (c) implies (a) (the remaining implication ``(b) $\Rightarrow$ (a)" is simpler and similar). Let us assume that (c) is satisfied. In this situation, it is clear that $X\in \mathcal{L}^2$, as this integrability condition amounts to saying that the process $\mathbb{X}_{\bar\zeta_2,n}$ has its paths in $L^1$ a.s. We have that $\hat\mu^2\mathbb{X}_{\bar\zeta_2,n}\cw \mu^2 \mathbb{X}_{\bar\zeta_2,F}$. Further, by Lemma \ref{lemma equivalence}, we conclude that $\hat\mu^2\mathbb{X}_{\bar\zeta_2,n}\aequiv \overline{\mathbb{Z}}_n$, where
\begin{equation*}
\overline{\mathbb{Z}}_n(t):=\int_t^\infty \Emp_n+ h(t) \sqrt{n}(\mu-\hat\mu),\quad t\ge 0,
\end{equation*}
with
\begin{equation}\label{function h}
h(t):=2\mu\, g_{\bar\zeta_2}(t)-(1+t/\mu)e^{-t/\mu},\quad t\ge 0.
\end{equation}
The process $\overline{\mathbb{Z}}_n$ can be rewritten as the normalized sum
$\overline{\mathbb{Z}}_n=n^{-1/2} \sum_{i=1}^n \mathbb{Z}_{\bar\zeta_2,i}$,
where $ \mathbb{Z}_{\bar\zeta_2,1},\dots,\mathbb{Z}_{\bar\zeta_2,n}$ are $n$ independent copies of the zero mean process
\begin{equation*}
\mathbb{Z}_{\bar\zeta_2}(t):=\E(X-t)_+-(X-t)_++h(t)\,(\mu-X).
\end{equation*}
Therefore, the process $\mathbb{Z}_{\bar\zeta_2}$ satisfies the CLT in $L^1$ (and we see that $\mathbb{X}_{\bar\zeta_2,F}$ is a centered Gaussian process). Using again \cite[p. 205]{Araujo-Gine}, we obtain that
\begin{equation}\label{CLT th 2}
\int_0^\infty \sqrt{\E \mathbb{Z}_{\bar\zeta_2}(t)^2}\, \dif t<\infty.
\end{equation}
Finally, by  Minkowski inequality and Fubini theorem, we have that
\begin{equation*}
\int_0^\infty \sqrt{\E(X-t)_+^2}\, \dif t\le \E X^2/2+\sigma\Vert h\Vert_1+\int_0^\infty \sqrt{\E \mathbb{Z}_{\bar\zeta_2}(t)^2}\, \dif t.
\end{equation*}
From (\ref{function h}), we can check that $\Vert h\Vert_1\le 2\mu(1+\bar\zeta_2(X,Y_\mu))<\infty$, and from (\ref{CLT th 2}), we conclude that $\int_0^\infty \sqrt{\E(X-t)_+^2}\, \dif t<\infty$. This implies that $X\in\mathcal{L}^{4,2}$ because $t^2\Prob(X>2t)\le \E(X-t)_+^2$.
\hfill $\Box$
%% END OF PROOF OF THEOREM 2 %%

\medskip

%% PROOF OF LEMMA 1 %%
%\textbf{Proof of Lemma \ref{lemma key}.}   First, we note that we can assume that $g\ge 0$. If this is not the case, it is enough to write $g=|g|\,\sgn(g)$. % because the sign function does not affect the $L^p$-norm of the involved functions.
%In such a case, we have that
%\begin{align}
%\rho_n(f,g)&=\int_{I_g} |f|+\int_{I_g^c}(|f+\sqrt{n} g|- \sqrt{n} g)\nonumber  \\
%&= \int_{I_g} |f|+ \int_{I_g^c\cap\{f+\sqrt{n} g\ge 0\}} f  -  \int_{I_g^c\cap\{f+\sqrt{n} g< 0\}} (f+2\sqrt{n} g).\label{integrals-p=1}
%\end{align}
%By the dominated convergence theorem, the second integral in (\ref{integrals-p=1}) converges to $\int_{I_g^c} f$. For the last integral in (\ref{integrals-p=1}), we have that
%\begin{align}
%\int_{I_g^c\cap\{f+\sqrt{n} g< 0\}} |f+2\sqrt{n} g| &\le 2 \int_{I_g^c\cap\{f+\sqrt{n} g< 0\}} |f+\sqrt{n} g|+ \int_{I_g^c\cap\{f+\sqrt{n} g< 0\}} |f|\nonumber\\
%&\le 3 \int_{I_g^c\cap\{f+\sqrt{n} g< 0\}} |f|.\label{integrals-2-p=1}
%\end{align}
%Again, by the dominated convergence theorem we conclude that the integral in (\ref{integrals-2-p=1}) goes to $0$, and hence we conclude that $\lim_{n\to\infty}\rho_n(f,g)=\rho(f,g)$.

%Finally, if $f_n\to f$ in $L^1$, by Minkowski inequality, we obtain that
%\begin{equation*}
%\begin{split}
%|\rho_n(f_n,g)-\rho(f,g)|&\le |\rho_n(f_n,g)-\rho_n(f,g)  |+ |\rho_n(f,g)-\rho(f,g)|\\
%&\le \Vert f_n-f\Vert_1+ |\rho_n(f,g)-\rho(f,g)|.
%\end{split}
%\end{equation*}
%Therefore, we see that $\lim_{n\to\infty}\rho_n(f_n,g)=\rho(f,g)$, and the proof is complete. \hfill $\Box$
%% END OF PROOF OF LEMMA 1 %%

%\medskip

%% PROOF OF THEOREM 3 %%
\noindent
\textit{Proof of Theorem \ref{Theorem 3}} We have that $\delta_n(d,F)=\rho_n(\mathbb{X}_{d,n},g_d)$, with $\rho_n$ defined in (\ref{rhon}).
The continuity of $\rho_n$ (in $L^1$) was analyzed in \cite[Lemma 4]{Carcamo}, where it was shown that if $f_n\to f$ in $L^1$ and $g\in L^1$, then, for $I(g):=\{t\ge 0: g(t)=0\}$,
\begin{equation}\label{rho}
\lim_{n\to\infty} \rho_n(f_n,g) = \rho(f,g):=\int_{I(g)} |f|+\int_{I(g)^c}f\, \sgn(g).
\end{equation}

Therefore, from Theorems \ref{Theorem SP Wasserstein} and \ref{Theorem SP Zolotarev}, (\ref{rho}), and the extended continuous mapping theorem (see \cite[Theorem 1.11.1]{vanderVaart-Wellner}), we conclude that
$\delta_n(d,F)=\rho_n(\mathbb{X}_{d,n},g_d) \cd  \rho(\mathbb{X}_{d,F},g_d)=\delta_\infty(d,F)$, as $n\to\infty$.
\hfill $\Box$
%% END OF PROOF OF THEOREM 3 %%

\medskip

%% PROOF OF COROLLARY 2 %%
\noindent
\textit{Proof of Corollary \ref{Corrolary normal}} From Theorem \ref{Theorem 3}, we have that
\begin{equation*}
\delta_\infty(d,F)=\int_0^\infty \mathbb{X}_{d,F}(t)\, \sgn(g_d(t))\,\dif t.
\end{equation*}
As $\mathbb{X}_{d,F}$ is a centered Gaussian process and $g_d$ is nonrandom, we conclude that $\delta_\infty(d,F)$ is normally distributed.
\hfill $\Box$
%% END OF PROOF OF COROLLARY 2 %%

%%%%%%%%%%%%%%%%%%%%%%%%%%%%%%%%%%%%%%%%%%%%%%%%%%%%%%%%%%%%%%%%%%%%%%%%%%%%%%%%%%%
% SECTION: ACKNOWLEDGEMENTS
%%%%%%%%%%%%%%%%%%%%%%%%%%%%%%%%%%%%%%%%%%%%%%%%%%%%%%%%%%%%%%%%%%%%%%%%%%%%%%%%%%%

\section*{Acknowledgements}

The authors are grateful to three reviewers and the associate editor for their insightful comments
which have improved the presentation of the paper.


\begin{thebibliography}{99}

\bibitem{Araujo-Gine}  Araujo, A. and Gin\'{e}, E. (1980). \emph{The Central Limit Theorem for Real and Banach Valued Random Variables}. Wiley.
    
\bibitem{Ascher} Ascher, S. (1990). A survey of tests for exponentiality. {\em Communications in Statistics-Theory and Methods}, 19(5), 1811--1825.

\bibitem{Baillo-Carcamo-Nieto} Ba\'{\i}llo, A., C\'{a}rcamo, J. and Nieto, E. (2015). A test for convex dominance with respect to the exponential class based on an $L^1$ distance. {\it IEEE Transactions on Reliability}, 64, 71--82.

\bibitem{del Barrio} del Barrio, E., Gin\'{e}, E. and Matr\'{a}n, C. (1999). Central limit theorems for the Wasserstein distance between the empirical and the true distributions. {\em The Annals of Probability}, 27, 1009--1071.

\bibitem{Broos_et_al_11}
    Broos, P.S., Getman, K.V., Povich, M.S., Townsley, L.K., Feigelson, E.D. and Garmire, G.P. (2011).
    A naive Bayes source classifier for X-ray sources.
    \textit{The Astrophysical Journal Supplement Series}
    194, 4.

%  \bibitem{Broos_et_al_13}
%      Broos, P.S., Getman, K.V., Povich, M.S., Feigelson, E.D., Townsley, L.K., Naylor, T., Kuhn, M.A., King, R.R. and Busk, H.A. (2013).
%      Identifying young stars in massive star-forming regions for the MYStIX project.
%      \textit{The Astrophysical Journal Supplement Series}, 209, 32.

\bibitem{Carcamo} C\'{a}rcamo, J. (2017). Integrated empirical processes in $L^p$ with applications to estimate probability metrics. {\it Bernoulli}, 23(4B), 3412--3436.

\bibitem{Efron-Tibshirani}
    Efron, B., and Tibshirani, R.J. (1993).
    \textit{An introduction to the bootstrap}.
    CRC press.

\bibitem{FeigelsonBabu12}
    Feigelson, E.D. and Babu, G.J. (2012).
    \textit{Modern Statistical Methods for Astronomy. With R Applications}.
    Cambridge University Press.

\bibitem{Feigelson_etal05} Feigelson, E.D., Getman, K., Townsley, L., Garmire, G., Preibisch, T., Grosso, N., Montmerle, T., Muench, A. and McCaughrean, M. (2005). Global X-ray properties of the Orion Nebula region. {\em The Astrophysical Journal Supplement Series}, 160, 379--389.

\bibitem{Garcia-Escudero} Garc\'{\i}a-Escudero, L.A., Gordaliza, A. and Mayo-Iscar, A. (2014). A constrained robust proposal for mixture modeling avoiding spurious solutions. {\em Advances in Data Analysis and Classification}, 8, 27--43.
    
\bibitem{Getman_etal05b} Getman, K.V., Feigelson, E.D., Grosso, N., McCaughrean, M.J., Micela, G., Broos, P. Garmire, G. and Townsley, L. (2005a). Membership of the Orion Nebula population from the Chandra Orion Ultradeep Project.  {\em The Astrophysical Journal Supplement Series}, 160, 353--378.
    
\bibitem{Getman_etal05a} Getman, K.V., Flaccomio, E., Broos, P.S., Grosso, N., Tsujimoto, M., Townsley, L., Garmire, G.P., Kastner, J., Li, J., Harnden, F.R., Wolk, S., Murray, S.S., Lada, C.J., Muench, A.A., McCaughrean, M.J., Meeus, G., Damiani, F., Micela, G., Sciortino, S., Bally, J., Hillenbrand, L.A., Herbst, W.,  Preibisch, T. and Feigelson, E. (2005b). Chandra Orion Ultradeep Project: Observations and source lists.  {\em The Astrophysical Journal Supplement Series}, 160, 319--352.

\bibitem{Getman_etal08a} Getman, K.V., Feigelson, E.D., Broos, P.S., Micela, G. and Garmire, G.P. (2008a). X-ray flares in Orion young stars. I. Flare characteristics. {\em The Astrophysical Journal}, 688, 418--436.

\bibitem{Getman_etal08b} Getman, K.V., Feigelson, E.D., Micela, G., Jardine, M.M., Gregory, S.G. and Garmire, G.P. (2008b). X-ray flares in Orion young stars. II. Flares, magnetospheres, and protoplanetary disks. {\em The Astrophysical Journal}, 688, 437--455.

\bibitem{Grafakos} Grafakos, L. (2014). \emph{Classical Fourier Analysis}. Springer.

\bibitem{Henze-Meintanis} Henze, N., and  Meintanis, S.G. (2005). Recent and classical tests for exponentiality: a partial review with comparisons.  {\em Metrika}, 61(1), 29--45.

\bibitem{Hubert-Rousseeuw-Segaert} Hubert, M., Rousseeuw, P. and Segaert, P. (2017).  Multivariate and functional classification using depth and distance.  {\em Advances in Data Analysis and Classification}, 11, 445--466.

\bibitem{Kallemberg} Kallemberg, O. (1997). \emph{Foundations of Modern Probability}. Springer.

\bibitem{Kuhn}
Kuhn, M. Contributions from J. Wing, S. Weston, A. Williams, C. Keefer, A. Engelhardt, T. Cooper, Z.
  Mayer, B. Kenkel, the R Core Team, M. Benesty, R. Lescarbeau, A. Ziem, L. Scrucca, Y. Tang, C. Candan and T. Hunt. (2016). caret: Classification and Regression Training. R package version 6.0-72. {\tt https://CRAN.R-project.org/package=caret}

\bibitem{Ledoux-Talagrand} Ledoux, M. and Talagrand, M. (2002). \emph{Probability in Banach Spaces}. Springer.

\bibitem{Rachev} Rachev, S.T., Klebanov, L., Stoyanov, S.V. and Fabozzi, F. (2013). {\it The Methods of Distances in the Theory of Probability and Statistics}. Springer.

\bibitem{Rachev-2011} Rachev, S.T., Stoyanov, S.V. and Fabozzi, F. (2011). {\it A Probability Metrics Approach to Financial Risk Measures}. Wiley-Blackwell.

\bibitem{Schulz12} Schulz, N.S. (2012). {\em The Formation and Early Evolution of Stars: From Dust to Stars and Planets}. Second edition. Springer.

\bibitem{mclust_ref} Scrucca L., Fop M., Murphy T.B.and Raftery A.E. (2016). mclust 5: Clustering, classification and density estimation using Gaussian finite mixture models. {\em R Journal}, 8, 289--317.

\bibitem{Shaked} Shaked, M. and Shanthikumar, J.G. (2006). \emph{Stochastic Orders}. Springer.

\bibitem{van der Vaart} van der Vaart, A. (1998). {\it Asymptotic Statistics}. Cambridge University Press.

\bibitem{van der Vaart-Wellner}  van der Vaart, A.W. and  Wellner, J.A. (1996). {\it Weak Convergence and Empirical Processes: with Applications to Statistics}. Springer.

\bibitem{Venables-Ripley} Venables, W.N., and  Ripley, B.D.  (2002). {\it Modern Applied Statistics with S}. Springer, New York.

\bibitem{Wolk_et_al05}
    Wolk, S.J., Harnden, F.R., Jr, Flaccomio, E., Micela, G., Favata, F., Shang, H. and Feigelson, E.D. (2005).
    Stellar activity on the young suns of Orion: COUP observations of K5-7 pre-main-sequence stars.
    \textit{The Astrophysical Journal Supplement Series}
    160, 423--449.

\end{thebibliography}
\end{document}